\documentclass[12pt, twosideside]{article}

\usepackage{amsmath,amsthm,amsfonts,amssymb, latexsym, eepic, xypic, fancyhdr,cancel, geometry}
\usepackage[dvips]{epsfig}
\usepackage[usenames]{color}

\pagestyle{myheadings}
\def\titlerunning#1{\gdef\titrun{#1}}
\makeatletter
\def\author#1{\gdef\autrun{\def\and{\unskip, }#1}\gdef\@author{#1}}
\def\address#1{{\def\and{\\\hspace*{18pt}}\renewcommand{\thefootnote}{}%
\footnote {#1}}%
\markboth{\autrun}{\titrun}}
\makeatother
\def\email#1{e-mail: #1}

\newtheorem{thm}{Theorem}

\newtheorem*{claim}{Claim}
\newtheorem{lem}[thm]{Lemma}
\newtheorem{prop}[thm]{Proposition}
\newtheorem{cor}[thm]{Corollary}

\theoremstyle{definition}
\newtheorem{defin}{Definition}
\newtheorem*{notation}{Notation}

\newtheorem*{remark}{Remark}

\DeclareMathOperator{\Div}{div}
\DeclareMathOperator{\spt}{spt}

\DeclareMathOperator{\tr}{tr}
\DeclareMathOperator{\graph}{graph}

\def\rr{\mathbb{R}}

\def\isom{\cong}

\def\area{\mathcal{H}^{n-1}}
\def\barea{\mathcal{H}^{n-2}}
\def\F{\mathcal{F}}
\def\C{C}

\frenchspacing

\textwidth=15cm
\textheight=23cm
\parindent=16pt
\oddsidemargin=-0.5cm
\evensidemargin=-0.5cm
\topmargin=-0.5cm

\begin{document}

\baselineskip=17pt

\title{The spacetime positive mass theorem in dimensions less than eight}

\titlerunning{The spacetime positive mass theorem in dimensions less than eight}

\author{
Michael Eichmair
\and
Lan-Hsuan Huang
 \and
Dan A.\ Lee
\and
Richard Schoen
}

\date{\today}

\maketitle

\address{M. Eichmair: University of Vienna; \email{michael.eichmair@univie.ac.at} \and L.-H. Huang: University of Connecticut; \email {lan-hsuan.huang@uconn.edu} \and D. A. Lee: Queens College and CUNY Graduate Center; \email{dan.lee@qc.cuny.edu} \and R. Schoen: University of California, Irvine; \email {rschoen@math.uci.edu}}

\begin{abstract}
We prove the spacetime positive mass theorem in dimensions less than eight.  This theorem asserts that for any asymptotically flat initial data set that satisfies the dominant energy condition, the inequality $E\ge |P|$ holds, where $(E,P)$ is the ADM energy-momentum vector. Previously, this theorem was  only known for spin manifolds \cite{Witten:1981}. Our approach is a modification of the minimal hypersurface technique that was used by the last named author and S.-T.\ Yau to establish the time-symmetric case of this theorem \cite{Schoen-Yau:1979-pmt1, Schoen:1989}. Instead of minimal hypersurfaces, we use marginally outer trapped hypersurfaces (MOTS) whose existence is guaranteed by earlier work of the first named author \cite{Eichmair:2009-Plateau}. An important part of our proof is to introduce an appropriate substitute for the area functional that is used in the time-symmetric case to single out certain minimal hypersurfaces. We also establish a density theorem of independent interest and use it to reduce the general case of the spacetime positive mass theorem to  the special case of initial data that has harmonic asymptotics and satisfies the strict dominant energy condition. 
\end{abstract}

%%%%%%%%%%%%%%%%%%%%%%%%%%%%%%%%%%%%%%%%%%%%%%%%%
%%%%%%%%%%%%%%%%%%%%%%%%%%%%%%%%%%%%%%%%%%%%%%%%%
%%%%%%%%%%%%%%%%%%%%%%%%%%%%%%%%%%%%%%%%%%%%%%%%%

\section{Introduction}

The following theorem is the main result of this paper. The technical terms are defined in Section \ref{section-definitions}. 

\begin{thm}[Spacetime positive mass theorem] \label{thm:PMT}
Let $3\le n<8$ and let $(M, g,k)$ be an $n$-dimensional asymptotically flat initial data set that satisfies the dominant energy condition.  Then
\[
E\ge |P|,
\]
where $(E,P)$ is the ADM energy-momentum vector of $(M,g,k)$.
\end{thm}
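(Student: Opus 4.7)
The plan is to adapt the Schoen--Yau minimal hypersurface approach, with MOTS in place of minimal hypersurfaces, and to argue by contradiction. I would begin by invoking the density theorem highlighted in the abstract to replace $(M,g,k)$ by a nearby initial data set with harmonic asymptotics and strict dominant energy condition $\mu > |J|$, while preserving the assumed violation $E<|P|$. The harmonic asymptotics give sharp control on $g-\delta$ and $k$ at infinity, and the strict DEC supplies the positive margin needed at the end of the argument.

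After a rotation, assume $P$ is parallel to $\partial_n$, so that $E-|P|<0$ manifests as a negative asymptotic outer null expansion for hypersurfaces approximating the coordinate hyperplanes $\{x_n = \mathrm{const}\}$. Using Eichmair's existence theorem for the Plateau problem for MOTS, with barriers derived from the harmonic asymptotic form of $(g,k)$, I would produce MOTS $\Sigma_R$ inside large coordinate cylinders of radius $R$ separating the top and bottom barriers. Passage to the limit $R\to\infty$ rests on the curvature estimates for stable MOTS, which hold precisely when the ambient dimension is below eight; the outcome is a complete, properly embedded, two-sided MOTS $\Sigma\subset M$ asymptotic to a coordinate hyperplane.

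The heart of the proof is extracting geometric information from stability of $\Sigma$. Since MOTS do not arise from a variational principle, a substitute for the area functional is required. The MOTS stability operator has the nonsymmetric form $-\Delta+2X\cdot\nabla+Q$ for a vector field $X$ built from $k$ and a potential $Q$ involving the second fundamental form of $\Sigma$, the ambient Ricci tensor, and $k$; stability yields a positive principal eigenfunction $u$. Multiplying the eigenvalue equation by $u$ and integrating by parts symmetrizes the operator by absorbing the drift term into a divergence, and then the conformal change $\tilde g = u^{4/(n-3)}g|_\Sigma$ on $\Sigma$ yields an $(n-1)$-dimensional asymptotically flat metric whose scalar curvature is controlled from below by the positive quantity $\mu-|J|$. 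The Riemannian positive mass theorem in dimension $n-1$, applicable inductively throughout our range, then forces $m(\Sigma,\tilde g)\ge 0$; but the construction of $\Sigma$ and the harmonic asymptotic data ensure that its induced mass is comparable to $E-|P|<0$, a contradiction.

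The main obstacle I anticipate is the design of the substitute for the area functional in the third step: because the MOTS equation is first-order and lacks a natural variational origin, the correct symmetrization of the stability operator and the bookkeeping that matches the ADM mass of $(\Sigma,\tilde g)$ to $E-|P|$ after the conformal change is delicate, and is exactly the technical heart of the paper. A secondary challenge is that without area bounds the Plateau limit must be controlled entirely via curvature estimates for stable MOTS, which is precisely why the result is restricted to $n<8$. As the abstract indicates, the refined density argument that approximates the data by a Kerr slice outside a compact set offers an alternative route via the positive energy theorem for Kerr, but the direct MOTS argument would be my primary line of attack.
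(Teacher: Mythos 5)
Your overall skeleton (density theorem to harmonic asymptotics and strict DEC, barriers from $E<|P|$ with $P$ along $\partial_n$, Eichmair's MOTS Plateau problem in large cylinders, limit to a complete MOTS, Galloway--Schoen symmetrization of the stability operator, conformal deformation, contradiction with the Riemannian theorem in dimension $n-1$) matches the paper, but there is a genuine gap at the step you yourself flag as the heart of the matter, and the mechanism of your final contradiction is wrong. The limit MOTS $\Sigma_\infty$ produced by this construction is asymptotic to a horizontal plane with decay $O(|x'|^{3-n})$, so its induced metric has \emph{zero} ADM energy; its mass is not ``comparable to $E-|P|$.'' The contradiction in the paper comes instead from showing that the conformal factor $w$ solving $\Delta w-\tfrac{n-3}{4(n-2)}R\,w=0$ (making the conformal metric scalar-flat, not merely of controlled scalar curvature) strictly \emph{decreases} the energy, giving $E(w^{4/(n-3)}g_\infty)<0$. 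That sign requires the stability inequality $\int(|\nabla v|^2+Qv^2)\ge 0$ to hold for test functions asymptotic to $1$, not just compactly supported ones, and this is exactly what compactly supported MOTS stability (your positive principal eigenfunction argument) cannot deliver: uncontrolled boundary terms appear at infinity along the cylinders. The paper's substitute for the area functional is the concrete flux functional $\F(\Sigma)=\int_{\partial\Sigma}\langle\partial_n,\eta\rangle\,d\mathcal{H}^{n-2}$ on the family $\{\Sigma_{\rho,h}\}_{|h|\le\Lambda}$ of MOTS at all heights, together with a height-picking argument: since $\F(\Sigma_{\rho,-\Lambda})<0<\F(\Sigma_{\rho,\Lambda})$ and $\F$ can only jump downward, one finds $h_\rho$ with $D\F|_{\Sigma_{\rho,h_\rho}}(X)\ge 0$ for a deformation field $X$ equal to $\partial_n$ at the boundary (made rigorous via an inverse-function-theorem analysis, since the family need not be differentiable in $h$). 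This first-variation inequality is what controls the boundary terms and upgrades stability to test functions asymptotic to $1$. Your proposal constructs MOTS at a single height and never selects $h_\rho$, so it cannot close the argument for $3<n<8$.

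Two secondary points. First, you invoke the Riemannian positive mass theorem in dimension $n-1$ ``throughout our range,'' but for $n=3$ that would be dimension $2$; the paper handles $n=3$ separately (and without height-picking) by a Gauss--Bonnet argument on the limit surface, using quadratic area growth and the logarithmic cut-off trick. Second, the restriction $n<8$ enters through the regularity and compactness theory for $\lambda$-minimizing (almost-minimizing) boundaries, which is the form in which Eichmair's existence theory delivers the MOTS, rather than through curvature estimates for stable MOTS; this also supplies the area bounds you worry about when passing to the limit $\rho\to\infty$.
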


We briefly survey earlier results: The special case of Theorem \ref{thm:PMT} where $k\equiv0$ is called the time-symmetric case, or sometimes the Riemannian case. It is of particular importance. In the time-symmetric case, we have that $P=0$ and the dominant energy condition becomes the assumption that the scalar curvature of $g$ is nonnegative. The last named author and S.-T.\ Yau proved the time-symmetric case in dimension three in two articles from 1979 and 1981 \cite{Schoen-Yau:1979-pmt1, Schoen-Yau:1981-asymptotics}.  In \cite{Schoen-Yau:1979-pat}, they extended their proof of the time-symmetric case to dimensions less than $8$, as explained in detail in \cite{Schoen:1989}.   In 1981, they considered the general case $k\not \equiv0$ in dimension three and succeeded in proving that $E\ge0$ by solving Jang's equation  \cite{Schoen-Yau:1981-pmt2}.  Later, E. Witten discovered a completely different proof that $E\ge |P|$ in dimension three \cite{Witten:1981, Parker-Taubes:1982}.  Witten's technique easily generalizes to all higher dimensions, as long as the manifold is spin \cite{Bartnik:1986, Ding:2008}.  In dimensions higher than $7$, a complication arises in the Schoen-Yau argument due to possible singularities of minimal hypersurfaces.  Two different strategies for handling this complication have been announced by J.\ Lohkamp in a preprint \cite{Lohkamp:2006} from 2006 and by the last named author in 2009.  The first named author has generalized the spacetime $E\ge0$ theorem to dimensions less than $8$ (without spin assumption) in \cite{Eichmair:2008}.

For earlier history of this problem, we refer to the introduction of \cite{Schoen-Yau:1979-pmt1}.  The $E\ge0$ theorem is sometimes called the positive mass
theorem in the literature. We prefer to refer to it more accurately as the positive energy theorem. We reserve the phrase positive mass theorem for the $E \ge |P|$ theorem. This result could
also reasonably be called the
future timelike energy-momentum theorem.

Our proof of Theorem \ref{thm:PMT} is self-contained rather than by reduction to a previously known case. In particular, it gives a new proof of the $E \geq 0$ theorem for non-time-symmetric data. It follows from the work of D. Christodoulou and N. \'O Murchadha \cite{CO:1981} that the $E \geq0 $ theorem implies the $E  \geq |P|$ theorem in the vacuum case via a boost of the initial data slice in its spacetime development. At the end of this paper, we explain how our methods from Section \ref{section-density} allow for a generalization of this boost argument to arbitrary initial data satisfying the dominant energy condition. This provides an alternative proof of Theorem \ref{thm:PMT}. 

Our main theorem does not include a characterization of the equality case $E=|P|$.  The natural conjecture states that if $E=|P|$ in Theorem \ref{thm:PMT}, then $E=|P|=0$ and $(M,g)$ can be isometrically embedded into Minkowski space with second fundamental form $k$. Our proof of Theorem \ref{thm:PMT} is by contradiction, so the analysis of the equality case will require a substantial new idea. We note that the so-called equality case of the Riemannian positive mass theorem is derived from the nonnegativity of mass, but that its proof is unrelated to the proof of nonnegativity of mass. The situation in the case of general data is more complicated. It provides an interesting direction for future research.

The desired rigidity statement described in the preceding paragraph is already known to hold for spin manifolds. Although Witten sketched the basic idea for proving rigidity of spin manifolds in his 1981 article \cite{Witten:1981}, a complete, rigorous proof in all dimensions was not given until the work of P.T.\ Chru{\'s}ciel and D.\ Maerten in 2006 \cite{Chrusciel-Maerten:2006}. Their argument is based on R.\ Beig and P.T.\ Chru{\'s}ciel's 1996 proof in dimension three \cite{Beig-Chrusciel:1996}.

We briefly review the minimal hypersurface proof of the time-symmetric positive mass theorem in \cite{Schoen-Yau:1979-pmt1, Schoen:1989}. The argument is by induction on the dimension $3 \leq n < 8$. It proceeds by contradiction. Suppose that there exists an asymptotically flat Riemannian manifold  $(M, g)$ with nonnegative scalar curvature and negative mass $E<0$.  By a density argument  \cite{Schoen-Yau:1981-asymptotics}, one may assume that $(M,g)$ has harmonic asymptotics and positive scalar curvature.  The harmonic asymptotics and $E<0$ assumptions imply that  the coordinate planes $x^n=\pm\Lambda$ are barriers for minimal hypersurfaces for all sufficiently large $\Lambda$.  Consider an $(n-1)$-dimensional vertical cylinder $\partial C_\rho$ of large radius $\rho$ in the asymptotically flat coordinate chart.  For every $h\in[-\Lambda,\Lambda]$ there is an area-minimizing hypersurface $\Sigma_{\rho,h} \subset C_\rho$ with boundary equal to the height $h$ sphere on $\partial C_\rho$.  If $n<8$, this area-minimizing hypersurface is smooth.  Every such $\Sigma_{\rho,h}$ lies between the barrier planes $x^n=\pm\Lambda$. The area $\area(\Sigma_{\rho,h})$ is minimized over $h$ by some $h_\rho \in(-\Lambda,\Lambda)$. The corresponding surface $\Sigma_{\rho, h_\rho}$ has the property that
\begin{equation}\label{area-inequality}
\left.\frac{d^2}{dt^2}\right|_{t=0} \area(\Phi_t (\Sigma_{\rho,h_\rho}))\ge0
\end{equation}
for any variation $\Phi_t$ of  $\Sigma_{\rho,h_\rho}$ that is equal to vertical translation along $\partial C_\rho$.  One can then extract a smooth subsequential limit $\Sigma_{\infty}$ of $\Sigma_{\rho,h_{\rho}}$ as $\rho\to\infty$.   This $\Sigma_{\infty}$ is itself an $(n-1)$-dimensional asymptotically flat manifold with energy equal to zero.  Moreover, $\Sigma_{\infty}$ is a stable minimal hypersurface of $M$.  Owing to (\ref{area-inequality}), $\Sigma_\infty$ is stable with respect to variations that are (sufficiently close to) vertical translations outside a compact set. Using the well-established relationship between the stability of minimal hypersurfaces and scalar curvature, the stability of $\Sigma_\infty$ allows one to construct a conformal factor that changes the metric on $\Sigma_\infty$ to one with zero scalar curvature. The stability with respect to variations that are close to vertical translations is then used to show that the conformal factor must decrease the energy of $\Sigma_\infty$, thereby violating the time-symmetric positive mass theorem in dimension $n-1$.  For the base case of the induction, when $n=3$, one can show that the stability of $\Sigma_\infty$ and its asymptotics at infinity are incompatible with the Gauss-Bonnet Theorem.  When $n=3$, choosing a special height $h_\rho$ turns out to be unnecessary.

Our approach to the spacetime positive mass theorem is essentially a generalization of the proof described above.  In particular, it does not use the time-symmetric positive mass theorem as an input, as was done in the Jang equation approach of \cite{Schoen-Yau:1981-pmt2}.  
The proof is again by contradiction.  Let $(M,g,k)$ be an $n$-dimensional asymptotically flat initial data set satisfying the dominant energy condition $\mu \geq |J|$ and such that $E<|P|$. By our density theorem from Section \ref{section-density}, we may assume that $(M,g,k)$ has harmonic asymptotics and satisfies the strict dominant energy condition $\mu > |J|$. We may assume further that $P$ points in the vertical direction $-\partial_{n}$ of the asymptotically flat coordinate chart. The harmonic asymptotics and $E<|P|$ assumptions imply the coordinate planes $x^n=\pm\Lambda$ are barriers for marginally outer trapped hypersurfaces (MOTS) for all sufficiently large $\Lambda$.  Again, we consider an $(n-1)$-dimensional vertical cylinder $\partial C_\rho$ of large radius $\rho$. Let $h \in [- \Lambda, \Lambda]$. The results from \cite{Eichmair:2009-Plateau} guarantee the existence of a MOTS $\Sigma_{\rho,h}$ whose boundary is equal to the height $h$ sphere on $\partial C_\rho$.  This MOTS is smooth if $n<8$. Moreover, $\Sigma_{\rho, h}$ lies between the planes $x^n = \pm \Lambda$ and is stable in the sense of MOTS with boundary \cite{Galloway-Murchadha:2008}.  
Since MOTS are not known to arise from a variational principle, there is no canonical way of singling out a suitable height $h_\rho$ as in the time-symmetric case. To overcome this, we introduce a new functional $\F$ on hypersurfaces with boundary on $\partial C_\rho$ such that for some $h_\rho \in (-\Lambda, \Lambda)$ we (roughly) have that 
\begin{equation}\label{F-inequality}
\left.\frac{d}{dh}\right|_{h=h_\rho} \F(\Sigma_{\rho,h})\ge0. 
\end{equation}
Inequality (\ref{F-inequality}) in conjunction with the MOTS-stability of $\Sigma_{\rho, h}$ plays a role similar to that of (\ref{area-inequality}) in the time-symmetric case. Note that the $h_\rho$ selected in the time-symmetric case by minimization would satisfy  (\ref{F-inequality}) in our more general argument. As before, we extract a smooth subsequential limit $\Sigma_{\infty}$ of  $\Sigma_{\rho,h_\rho}$ as $\rho\to\infty$.  This $\Sigma_{\infty}$ is itself an $(n-1)$-dimensional asymptotically flat manifold with energy equal to zero, and $\Sigma_{\infty}$ is a stable MOTS in $M$. Using the relationship between stability of MOTS and scalar curvature established in \cite{Galloway-Schoen:2006}, one can construct a conformal factor that changes the metric on $\Sigma_\infty$ to one with zero scalar curvature.  Finally, (\ref{F-inequality}) plays the role of (\ref{area-inequality}) in establishing that the conformal factor must decrease the energy of $\Sigma_\infty$, thereby violating the time-symmetric positive mass theorem in dimension $n-1$.  

As in the time-symmetric case, the delicate height-picking argument is not required when $n=3$. 

The structure of the paper is as follows.  Section \ref{section-definitions} sets up the basic definitions and recalls some useful background material.  Section \ref{section-construction} establishes the existence of the MOTS needed for the proof.  Section \ref{dimension-three} completes the $n=3$ case of Theorem \ref{thm:PMT}.  The basic $n=3$ argument, which is explained in detail in Sections  \ref{section-construction} and \ref{dimension-three}, was first sketched out in 
 \cite[Section 7.2]{Schoen:2005}.  Section \ref{higher-dimensions} contains the parts of the proof that are specific to dimensions greater than three, including the height-picking procedure. In the last section, we show that an initial data set which satisfies the dominant energy condition can be perturbed by a small amount to one with harmonic asymptotics that satisfies the strict dominant energy condition. 
 
 %%%%%%%%%%%%%%%%%%%%%%%%%%%%%%%%%%%%%%%%%%%%%%%%%
%%%%%%%%%%%%%%%%%%%%%%%%%%%%%%%%%%%%%%%%%%%%%%%%%
%%%%%%%%%%%%%%%%%%%%%%%%%%%%%%%%%%%%%%%%%%%%%%%%%

\section{Definitions, notation, and basic facts}\label{section-definitions}

\begin{defin} Let $B$ be a closed ball in $\rr^n$ with center at the origin. For every $k\in\{0, 1, \ldots\}$, $p \geq 1$, and $q\in\rr$ we define the \emph{weighted Sobolev space} $W^{k,p}_{-q}(\rr^n\smallsetminus B)$ as the collection of those  $f \in W^{k, p}_{loc} ( \rr^n\smallsetminus B)$  with 
\[
		\| f\|_{W^{k,p}_{-q}(\rr^n\smallsetminus B)} := \left( \int_{\rr^n\smallsetminus B} \sum_{|I| \le k} \left( \big|( \partial^{I} f )(x) \big|  |x|^{ | I| + q }\right)^p |x|^{-n} \, d x  \right)^{\frac{1}{p}}<\infty. 
\]
We usually write $L^p_{-q}$ instead of $W^{0, p}_{-q}$. 

Suppose now that  $M$ is a $\C^k$ manifold such that there is a compact set $K\subset M$ and a diffeomorphism $M\smallsetminus K \isom  \rr^n\smallsetminus B$. The $W^{k,p}_{-q}$ norm on $M$ is defined in a routine way by choosing an atlas for $M$ that consists of the diffeomorphism $M\smallsetminus K \isom  \rr^n\smallsetminus B$ and finitely many precompact charts, and then summing the $W^{k,p}_{-q}(\rr^n\smallsetminus B)$ norm on the noncompact chart and the $W^{k,p}$ norms on the precompact charts.  The resulting space $W^{k,p}
_{-q}(M)$ and its topology only depend on the diffeomorphism $M\smallsetminus K \isom  \rr^n\smallsetminus B$.  This definition can be extended to the tensor bundles of $M$ simply by considering components with respect to these charts.  We usually write $W^{k,p}_{-q}$ for $W^{k,p}_{-q}(M)$ when the context is clear.
\end{defin}

\begin{defin}  Let $B$ be a closed ball in $\rr^n$ with center at the origin.
For every $k\in\{0, 1, \ldots\}$, $\alpha\in(0,1)$, and $q\in\rr$ we define the \emph{weighted H\"{o}lder space} $\C^{k,\alpha}_{-q}(\rr^n\smallsetminus B)$ as the collection of those $f \in C^{k, \alpha}_{loc} (\rr^n\smallsetminus B)$ with
\[
		\| f\|_{\C^{k,\alpha}_{-q}(\rr^n\smallsetminus B)} := \sum_{|I| \le k} \sup_x \left| |x|^{ | I| + q }(\partial^{I} f)(x) \right| +  \sum_{|I| = k} \left[ |x|^{\alpha + | I| + q }(\partial^{I} f)(x) \right]_\alpha
		<\infty. 
\]
Suppose now that  $M$ is a $\C^k$ manifold such that there is a compact set $K\subset M$ and a diffeomorphism $M\smallsetminus K \isom  \rr^n\smallsetminus B$. The space $\C^{k,\alpha}_{-q}(M)$ can then be defined just as we did for $W^{k,p}_{-q}(M)$  in the preceding definition.
\end{defin}

\begin{defin} 
Let $n\ge3$.
An \emph{initial data set} is an $n$-dimensional manifold $M$ equipped with a complete $\C^{2}_{loc}$ Riemannian metric $g$ and a $\C^{1}_{loc}$ symmetric $(0,2)$-tensor $k$. On an initial data set, one can define the \emph{mass density} $\mu$ and the \emph{current density} $J$ by
\begin{align*} 
\begin{split}
	\mu&=\tfrac{1}{2}\left(R_g  - | k |_g^2 + (\tr_g k )^2\right)\\
	J&=  \Div_g k - d(\tr_g k).
\end{split}
\end{align*}
We say that $(M,g,k)$ satisfies the \emph{dominant energy condition} if
\[ \mu\ge |J|_g.\]
  It is often convenient to consider the \emph{momentum tensor} 
\[\pi= k -(\tr_g k)g.\]
It contains the same information as $k$ since $k = \pi - \tfrac{1}{n-1}(\tr_g \pi)g$.

Let 
\[ p > n,  \quad q \in ((n-2)/2, n-2),  \quad q_0>0,  \quad \text{and} \quad \alpha \in (0, 1 - n/p].\]  We say that an initial data set $(M,g, k)$ is \emph{asymptotically flat}\footnote{There are several incompatible notions of  \emph{asymptotic flatness} in the literature.} of type $(p,q,q_0,\alpha)$ if $g \in C^{2, \alpha}_{loc} (M)$, $k \in C^{1, \alpha}_{loc}(M)$, and if there is a compact set $K\subset M$ and a diffeomorphism $M\smallsetminus K\isom \rr^n\smallsetminus B$ for some closed ball $B\subset \rr^n$ such that
\[
(g-\delta,k)\in W^{2,p}_{-q}(M)\times W^{1,p}_{-1-q}(M)
\]
where $\delta$ is a smooth symmetric $(0,2)$-tensor that coincides with the Euclidean inner product on $M\smallsetminus K\isom \rr^n\smallsetminus B$, and such that 
\[ 
\mu, J \in\C^{0,\alpha}_{-n-q_0} (M).
\]
If $(M, g, k)$ is asymptotically flat, one can define the \emph{ADM energy} $E$ and the \emph{ADM momentum} $P$ as
\begin{align*}
E &= \tfrac{1}{2(n-1)\omega_{n-1}}  \lim_{r\to\infty}\int_{|x|=r}\sum_{i,j=1}^n (g_{ij,i}-g_{ii,j})\nu_0^j\, d\area_0\\
P_i&=\tfrac{1}{(n-1)\omega_{n-1}} \lim_{r\to\infty}  \int_{|x|=r}\sum_{j=1}^n \pi_{ij}\nu_0^j\, d\area_0\qquad i= 1, 2, \ldots, n. 
\end{align*}
Here, the integrals are computed in the coordinate chart $M \setminus K \cong_x \rr^n\smallsetminus B$, $\nu_0^j= x^j / |x|$, $\area_0$ is the $(n-1)$-dimensional Euclidean Hausdorff measure, and $\omega_{n-1}$ is the volume of the standard unit sphere in $\rr^n$. 
\end{defin} 

\begin{remark}
Theorem \ref{thm:PMT} still holds if we allow multiple asymptotically flat ends in the definition of initial data sets. We simply use the large celestial spheres in the other ends as barriers in the proof of Lemma \ref{MOTS-existence}. 
\end{remark} 

In order to carry out our main argument, we require better asymptotic behavior.
\begin{defin} 
Let $n\geq3$ and let $(M, g, k)$ be an $n$-dimensional asymptotically flat initial data set .  We say that $(M,g,k)$ has \emph{harmonic asymptotics} if there exists a $\C^{3,\alpha}$ diffeomorphism as in the definition of asymptotic flatness, as well as a $\C^{2,\alpha}$ function $u$ and a $\C^{2,\alpha}$ vector field $Y$ such that for $i,j= 1, 2, \ldots, n$,
\begin{align*}
u(x)& = 1+ a|x|^{2-n}+ O^{2, \alpha}(|x|^{1-n})\\
Y_i(x)&=b_i|x|^{2-n}+O^{2, \alpha}(|x|^{1-n})\\
g_{ij}&=u^{\frac{4}{n-2}}\delta_{ij}\\
\pi_{ij}& = u^{\frac{2}{n-2}}\left[(L_Y \delta)_{ij} -(\Div_\delta Y)\delta_{ij}\right]
\end{align*}
where $a$, $b_1, \ldots, b_n$ are constants, $\delta_{ij}$ is the Euclidean metric, and $L_Y$ is the Lie derivative.  Here and below, an expression $O^{k, \alpha}(|x|^{-q})$ stands for a function in the weighted H\"{o}lder space $\C^{k,\alpha}_{-q}$.  
\end{defin}

\begin{notation} Let $(M, g, k)$ be an $n$-dimensional initial data set. Let $\Sigma$ be a two-sided $\C^{3,\alpha}$ hypersurface with boundary in $M$ with unit normal $\nu$.  Let $D$ denote the ambient covariant derivative.
We define the second fundamental form $B_\Sigma$ and shape operator $S_\Sigma$ of $\Sigma$ using the convention
\[ B_\Sigma(X,Y)= \langle S_\Sigma (X),Y\rangle= \langle D_X \nu, Y\rangle\]
for vector fields $X,Y$ tangent to $\Sigma$, where the angle brackets denote the inner product $g$.  We define the mean curvature scalar $H_\Sigma$ to be the trace of $S_\Sigma$. According to this convention, the mean curvature of a sphere in $\rr^n$ with respect to the outward pointing unit normal is positive. We also define the \emph{expansion} 
\[\theta^+_{\Sigma} = H_\Sigma + \tr_\Sigma k\]
of $\Sigma$, where $\tr_\Sigma k$ denotes the trace over the tangent space of $\Sigma$.
If $\theta^+_\Sigma$ vanishes on all of $\Sigma$, we say that $\Sigma$ is a \emph{marginally outer trapped hypersurface}, or \emph{MOTS} for short. 

Note that the property of being a MOTS depends on the choice of normal.  For a vector field $X$ defined along $\Sigma$ but not necessarily tangent to it, we let
\[\Div_\Sigma X\]
be the function on $\Sigma$ which at $x \in \Sigma$ equals $\sum_{i=1}^{n-1} \langle D_{e_i} X, e_i\rangle$ where $e_1,\ldots,e_{n-1}$ is an orthonormal basis of $T_x \Sigma$. 
\end{notation}

\begin{notation}
Given a vector field $X$ defined along $\Sigma$, we can decompose $X$ into its normal and tangential components
\[X=\varphi \nu +\hat{X}.\]
Throughout this paper, whenever there is a vector field with the variable name $X$ on a hypersurface $\Sigma$, the function $\varphi$ and the tangent field $\hat{X}$ are defined this way. We use $\eta$ to denote the outward pointing unit normal of $\partial\Sigma$ in $\Sigma$.
\end{notation}

The expression for the linearization of the expansion stated in the following proposition generalizes the well-known formula for the variation of the mean curvature.  See, for example, \cite{Galloway-Schoen:2006}. 
\begin{prop}\label{prop:Dtheta}
Let $\Sigma$ be a two-sided hypersurface with boundary in an $n$-dimensional initial data set $(M, g, k)$, and let $\nu$ be a continuous unit normal field along $\Sigma$.  Let $X \in \mathfrak{X}(M)$ be a $\C^2$ vector field, and let $\Phi_t$ be the flow generated by $X$.  We can compute the expansion $\theta^+_{\Sigma_t}$ of the push forward $\Sigma_t := \Phi_t (\Sigma)$ with respect to the unit normal that points in the direction of $\Phi_{t*} (\nu)$ and pull it back to a function on $\Sigma$ using $\Phi_t$. The derivative of this function in $t$ at $t=0$ is denoted by $D \theta^+|_{\Sigma}(X)$. We have that 
\begin{equation}\label{dtheta}
D\theta^+|_\Sigma(X)= -\Delta_\Sigma \varphi+2 \langle W_\Sigma, \nabla\varphi\rangle  + (\Div_\Sigma W_\Sigma - |W_\Sigma|^2+Q_\Sigma)\varphi+ \nabla_{\hat{X}}\theta^+_\Sigma,
\end{equation}
where
\[Q_\Sigma=\tfrac{1}{2}R_\Sigma-\mu-J(\nu)-\tfrac{1}{2}|k_\Sigma+B_\Sigma|^2.\]
Here, $k_\Sigma$ denotes the restriction of $k$ to vectors tangent to $\Sigma$, and $W_\Sigma$ is the tangential vector field on $\Sigma$ that is dual to the $1$-form $k(\nu,\cdot)$ along $\Sigma$.

 \end{prop}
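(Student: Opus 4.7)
The plan is to decompose $X = \varphi\nu + \hat{X}$ and use the linearity of the operator $D\theta^+|_\Sigma$ in $X$. For the tangential piece $\hat{X}$, the flow $\Phi_t^{\hat{X}}$ preserves $\Sigma$ setwise (since $\hat{X}$ is tangent to $\Sigma$ there), so $\Sigma_t = \Sigma$ for this flow and $\theta^+_{\Sigma_t}$ pulls back to $\theta^+_\Sigma \circ \Phi_t^{\hat{X}}|_\Sigma$. Differentiating at $t=0$ produces the term $\nabla_{\hat{X}}\theta^+_\Sigma$, which accounts for the last summand in the formula. This reduces everything to computing $D\theta^+|_\Sigma(\varphi\nu)$.

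For the normal variation $\varphi\nu$, I would compute the linearizations of $H_\Sigma$ and $\tr_\Sigma k$ separately. For the mean curvature I would invoke the standard first variation formula
\[
DH|_\Sigma(\varphi\nu) = -\Delta_\Sigma\varphi - (|B_\Sigma|^2 + \ric_g(\nu,\nu))\varphi,
\]
which follows from the evolutions $\dot g_{ij}^\Sigma = 2\varphi B_{ij}$ and $\dot\nu = -\nabla_\Sigma\varphi$. For $\tr_\Sigma k$ I would rewrite $\tr_\Sigma k = \tr_g k - k(\nu_t,\nu_t)$, evaluate both pieces along the curve $t\mapsto \Phi_t(p)$ with the covariant derivative, and use $\dot\nu = -\nabla_\Sigma\varphi$ together with $k(\nu,\nabla_\Sigma\varphi) = \langle W_\Sigma,\nabla\varphi\rangle$ to obtain the cross term $2\langle W_\Sigma,\nabla\varphi\rangle$ and a zeroth-order term proportional to $\nabla_\nu(\tr_g k) - (D_\nu k)(\nu,\nu)$.

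The remaining task is to recognize that this zeroth-order contribution collapses, after applying the Gauss equation and the constraints, to the expression $(\Div_\Sigma W_\Sigma - |W_\Sigma|^2 + Q_\Sigma)\varphi$. I would use the definition $J = \Div_g k - d(\tr_g k)$ to replace $\nabla_\nu(\tr_g k)$ by $(\Div_g k)(\nu) - J(\nu)$, and split $(\Div_g k)(\nu)$ in an adapted orthonormal frame $(e_1,\ldots,e_{n-1},\nu)$ to convert the zeroth-order piece into $\sum_i(D_{e_i}k)(e_i,\nu) - J(\nu)$. A Codazzi-style integration by parts—done in a frame that is $\Sigma$-geodesic at the base point, using $D_{e_i}e_i = D^\Sigma_{e_i}e_i - B(e_i,e_i)\nu$—establishes the identity
\[
\sum_i(D_{e_i}k)(e_i,\nu) = \Div_\Sigma W_\Sigma - \langle B_\Sigma,k_\Sigma\rangle + Hk(\nu,\nu),
\]
which injects $\Div_\Sigma W_\Sigma$ into the expression and leaves residual terms involving $|B_\Sigma|^2$, $\ric_g(\nu,\nu)$, $Hk(\nu,\nu)$ and $\langle k_\Sigma,B_\Sigma\rangle$.

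The final identification uses the Gauss equation $R_\Sigma = R_g - 2\ric_g(\nu,\nu) + H^2 - |B_\Sigma|^2$ to eliminate $\ric_g(\nu,\nu)$ in favor of $R_\Sigma$, $R_g$, and $|B_\Sigma|^2$, followed by the constraint definition of $\mu$ together with the pointwise decompositions $|k|_g^2 = |k_\Sigma|^2 + 2|W_\Sigma|^2 + k(\nu,\nu)^2$ and $\tr_g k = \tr_\Sigma k + k(\nu,\nu)$ to eliminate $R_g$ and the $k$-norms. The cross term $\langle k_\Sigma, B_\Sigma\rangle$ combines with $|B_\Sigma|^2$ and $|k_\Sigma|^2$ to reproduce $-\tfrac{1}{2}|k_\Sigma + B_\Sigma|^2$, completing the match with $Q_\Sigma$. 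The main obstacle is the careful sign bookkeeping throughout: in particular, the sign of $Hk(\nu,\nu)$ in the Codazzi identity is sensitive to the distinction between $D_{e_i}e_i$ and its intrinsic counterpart $D^\Sigma_{e_i}e_i$, and one needs to track cancellations between the ambient and intrinsic objects to see that the $\ric_g(\nu,\nu)$ and $|B_\Sigma|^2$ contributions from $DH$ are absorbed correctly by the Gauss equation.
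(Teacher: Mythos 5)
Your overall route is the standard one (the paper itself gives no proof of this proposition and simply points to \cite{Galloway-Schoen:2006}, where the computation is done at a MOTS): split $X=\varphi\nu+\hat X$, use $DH|_\Sigma(\varphi\nu)=-\Delta_\Sigma\varphi-(|B_\Sigma|^2+\ric_g(\nu,\nu))\varphi$, compute $\delta(\tr_\Sigma k)$ via $\tr_\Sigma k=\tr_g k-k(\nu_t,\nu_t)$ and $\dot\nu=-\nabla_\Sigma\varphi$, and convert the zeroth-order piece with $J=\Div_g k-d(\tr_g k)$, the identity $\sum_i(D_{e_i}k)(e_i,\nu)=\Div_\Sigma W_\Sigma-\langle B_\Sigma,k_\Sigma\rangle+Hk(\nu,\nu)$, the traced Gauss equation, and the definition of $\mu$. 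All of these intermediate identities are correct with the paper's sign conventions. The genuine gap is your final claim that the zeroth-order terms ``collapse'' to $(\Div_\Sigma W_\Sigma-|W_\Sigma|^2+Q_\Sigma)\varphi$: if you actually carry out the substitutions you list, the terms $\tfrac12(\tr_g k)^2$ coming from $\mu$, the $-\tfrac12H^2$ coming from the Gauss equation, and the $+Hk(\nu,\nu)$ coming from your Codazzi-type identity do not cancel, and you are left with the residue
\begin{equation*}
\Bigl[\tfrac12\bigl((\tr_\Sigma k)^2-H^2\bigr)+k(\nu,\nu)\,\theta^+_\Sigma\Bigr]\varphi
=\theta^+_\Sigma\Bigl(\tr_g k-\tfrac12\theta^+_\Sigma\Bigr)\varphi ,
\end{equation*}
which your outline silently discards. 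It vanishes exactly when $\theta^+_\Sigma=0$.

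So your argument proves \eqref{dtheta} only at a MOTS; for a general two-sided hypersurface the identity you set out to establish fails, and no bookkeeping can rescue it. A concrete check: take $(M,g,k)=(\mathbb{R}^3,\delta,\delta)$, $\Sigma=\{x^3=0\}$ with $\nu=\partial_3$, and $X=\partial_3$. Then $\theta^+_{\Sigma_t}\equiv 2$, so $D\theta^+|_\Sigma(X)=0$, while $\varphi\equiv1$, $W_\Sigma=0$, $\hat X=0$ and $Q_\Sigma=-\mu-\tfrac12|k_\Sigma|^2=-4$, so the right-hand side of \eqref{dtheta} is $-4$; the missing term $\theta^+(\tr_g k-\tfrac12\theta^+)=2\cdot2=4$ restores the balance. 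For the purposes of this paper the discrepancy is harmless, because \eqref{dtheta} is only ever invoked at hypersurfaces with $\theta^+=0$ (the definition of $L_\Sigma$, Lemmas \ref{stability} and \ref{uniform-control}, and the linearizations in Section \ref{not-smooth} are all taken at MOTS, where the residue and its contribution to the linearization vanish). You should therefore either state and prove the proposition under the hypothesis $\theta^+_\Sigma=0$, or carry the extra term $\theta^+_\Sigma(\tr_g k-\tfrac12\theta^+_\Sigma)\varphi$ through to the final formula; with that correction your computation closes.
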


\begin{notation}
Throughout this paper, we will drop the $\Sigma$ subscripts when the context is clear.  Everything is computed with respect to the metric $g$ unless noted otherwise. In particular, we use $\mathcal{H}$ to denote the Hausdorff measures associated with $g$. 
\end{notation}

\begin{defin}
Let $\Sigma$ be a MOTS in an initial data set $(M,g,k)$.  We define the operator  
\begin{equation}\label{define-L}
L_\Sigma v :=-\Delta_\Sigma v +2 \langle W_\Sigma, \nabla v\rangle  + (\Div_\Sigma W_\Sigma - |W_\Sigma|^2+Q_\Sigma)v,
\end{equation}
where $v$ is a function on $\Sigma$.  Although this operator is not self-adjoint, the Krein-Rutman Theorem shows that there is a unique (Dirichlet) eigenvalue with least real part. It is called the \emph{principal (Dirichlet) eigenvalue} of $L_\Sigma$.  This eigenvalue is real. If $\Sigma$ is connected, the corresponding eigenspace is one-dimensional  and generated by a $\C^{2,\alpha}$ \emph{principal eigenfunction} that is positive on the interior of $\Sigma$ \cite[p.\ 3]{Galloway-Murchadha:2008}.  If the principal eigenvalue is nonnegative, we say that  $\Sigma$ is a \emph{stable} MOTS.  This concept of stability was introduced in \cite{Galloway-Murchadha:2008}, based on the analogous definition for closed MOTS in \cite{Andersson-Mars-Simon:2005}. It is easy to see that this generalizes the notion of stability of minimal hypersurfaces with boundary. 
\end{defin}

\begin{prop} \label{symmetrized-stability} 
Let $\Sigma$ be a stable MOTS in an initial data set $(M,g,k)$. For every compactly supported $\C^1$ function $v$ on $\Sigma$ that vanishes along $\partial\Sigma$, we have that
\begin{equation}
\int_{\Sigma} (|\nabla v|^2  + Q_\Sigma v^2 )\,d\area \geq 0.
\end{equation}
\end{prop}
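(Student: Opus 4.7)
The plan is to use the symmetrization trick of Galloway--Schoen, which converts the non-self-adjoint stability inequality for $L_\Sigma$ into a manifestly symmetric quadratic form inequality. Working on each connected component separately, stability provides a $\mathcal{C}^{2,\alpha}$ principal Dirichlet eigenfunction $\phi$ which is positive in the interior and vanishes on $\partial\Sigma$, with eigenvalue $\lambda\ge 0$. Dividing $L_\Sigma\phi = \lambda\phi$ by $\phi$ and solving for the potential yields
\[
Q_\Sigma \;=\; \lambda \;+\; \frac{\Delta_\Sigma \phi}{\phi} \;-\; 2\,\frac{\langle W_\Sigma,\nabla\phi\rangle}{\phi} \;-\; \Div_\Sigma W_\Sigma \;+\; |W_\Sigma|^2.
\]

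Next, I would substitute this into $\int_\Sigma(|\nabla v|^2 + Q_\Sigma v^2)\,d\area$ and integrate by parts against the weight $v^2/\phi$: using $\int v^2(\Delta_\Sigma\phi)/\phi = -\int \langle\nabla\phi,\nabla(v^2/\phi)\rangle$ together with $\int v^2\Div_\Sigma W_\Sigma = -\int \langle W_\Sigma,\nabla v^2\rangle$, and expanding $\nabla(v^2/\phi) = 2v\nabla v/\phi - v^2\nabla\phi/\phi^2$, every term rearranges to produce a completed square:
\[
\int_\Sigma \bigl(|\nabla v|^2 + Q_\Sigma v^2\bigr)\,d\area \;=\; \lambda \int_\Sigma v^2\, d\area \;+\; \int_\Sigma \Bigl|\nabla v - \tfrac{v}{\phi}\nabla\phi + v\, W_\Sigma\Bigr|^2\, d\area,
\]
which is nonnegative since $\lambda \ge 0$.

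The main obstacle is justifying the integration by parts when $v$ is only compactly supported and vanishes on $\partial\Sigma$, since $\phi$ also vanishes there and one must control the quotient $v/\phi$ near the boundary. This is handled via the Hopf boundary point lemma: $\partial_\eta\phi < 0$ on $\partial\Sigma$, so $\phi$ behaves like $\operatorname{dist}(\cdot,\partial\Sigma)$ near $\partial\Sigma$, and since $v\in\mathcal{C}^1$ with $v|_{\partial\Sigma}=0$ one has $|v|\lesssim \operatorname{dist}(\cdot,\partial\Sigma)$, so $v/\phi \in L^\infty$ near $\partial\Sigma$. To make the manipulations rigorous, I would first approximate $v$ by $v_\epsilon := \chi_\epsilon(\phi)\,v$, where $\chi_\epsilon$ is a smooth cutoff supported in $[\epsilon,\infty)$, apply the identity above to the interior-supported $v_\epsilon$, and then pass to the limit $\epsilon\to 0$ using dominated convergence together with the $L^\infty$ bound on $v/\phi$. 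Standard density also upgrades from $\mathcal{C}^1_c$ test functions to whatever class is needed later.
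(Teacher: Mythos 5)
Your argument is correct and is essentially the paper's own: the paper simply cites the Galloway--Schoen symmetrization (cf.\ the computation in the proof of Lemma \ref{stability}), which is exactly your device of dividing $L_\Sigma\phi=\lambda\phi$ by the positive principal Dirichlet eigenfunction and completing the square to get $\int_\Sigma(|\nabla v|^2+Q_\Sigma v^2)\,d\area=\lambda\int_\Sigma v^2\,d\area+\int_\Sigma|\nabla v-\tfrac{v}{\phi}\nabla\phi+vW_\Sigma|^2\,d\area\ge 0$. Your additional care with the boundary degeneracy (Hopf lemma plus cutoff) is a legitimate filling-in of a detail the paper leaves to the reference, not a different route.
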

This follows from an argument in \cite{Galloway-Schoen:2006}, \emph{cf.\ }the proof of Lemma \ref{stability} below.

We state the following geometric variant of the strong maximum principle for ordered hypersurfaces that are subsolutions and supersolutions of the same prescribed mean curvature equation. We refer to \cite[Lemma 1]{Schoen:1983}, \cite[Proposition 3.1]{Andersson-Metzger:2009}, and \cite[Proposition 2.4]{Ashtekar-Galloway:2005} for similar results and partial proofs. It is important to pay attention to the choice of normal here. A good example to have in mind is the following: Consider a sphere of radius $2$ that is tangent to a sphere of radius $1$ in Euclidean space.  Either the smaller one is enclosed by the larger one or it lies outside of it.  The (obvious) conclusion of the lemma in this simple example is that the larger sphere cannot lie inside the smaller one.

\begin{prop}[Strong maximum principle] \label{max-prin}
Let $g$ be a $\C^2$ Riemannian metric on $M = \bar B^{n-1}_1 (0) \times (-2, 2) \subset \mathbb{R}^n$. Let $F$ be a $\C^1$ function on the unit sphere bundle of $M$ and let $u_1, u_2 \in \C^2(\bar B^{n-1}_1(0))$ be such that $-1 \leq u_1(x') \leq u_2(x') \leq 1$ for all $x' \in \bar B^{n-1}_1(0)$. Assume that the hypersurfaces with boundary $\Sigma_i = \textrm{graph} (u_i) \subset M$ are such that $H_{\Sigma_1} (x) \leq F(x, \nu_{\Sigma_1}(x))$ for all $x \in \Sigma_1$ and $H_{\Sigma_2} (x) \geq F(x, \nu_{\Sigma_2}(x))$ for all $x \in \Sigma_2$ where the mean curvatures are computed using the upward pointing unit normals.  If $\Sigma_1$ and $\Sigma_2$ intersect at an interior point or are tangent to each other at a boundary point, then they must be equal.
\end{prop}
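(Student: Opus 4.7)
The plan is to reduce the assertion to the classical interior strong maximum principle and the Hopf boundary point lemma for linear uniformly elliptic operators. Write $\Sigma_i = \graph(u_i)$ on $\bar B^{n-1}_1(0)$ with $u_1 \leq u_2$, and view each $u_i$ as satisfying a scalar quasilinear inequality version of the prescribed mean curvature equation
\[
Q[u](x') := H_{\graph(u)}(x', u(x')) - F\bigl((x', u(x')), \nu_u(x', u(x'))\bigr),
\]
namely $Q[u_1] \leq 0$ and $Q[u_2] \geq 0$. Set $w := u_2 - u_1 \geq 0$.

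First I would linearize the difference along the straight-line interpolation $u_t := (1-t) u_1 + t u_2$, $t \in [0,1]$. By the fundamental theorem of calculus,
\[
0 \leq Q[u_2] - Q[u_1] = \int_0^1 \frac{d}{dt} Q[u_t] \, dt =: \bar L w,
\]
where $\bar L$ is the average of the Fr\'echet derivatives of $Q$ along the segment. A direct expansion shows that $\bar L$ has the form $\bar L w = a^{ij}(x') \partial_{ij} w + b^i(x') \partial_i w + c(x') w$ with bounded continuous coefficients, and that its principal symbol inherits the uniform ellipticity of the mean curvature operator. Modulo an overall sign depending on the paper's convention $B_\Sigma(X, Y) = \langle D_X \nu, Y\rangle$ for the second fundamental form, we thus obtain a linear uniformly elliptic operator $\tilde L$ in standard form together with a differential inequality for $w$ that, combined with $w \geq 0$ and $w(x_0') = 0$ at the touching point, is exactly the input for the strong maximum principle.

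In the interior intersection case, $w$ vanishes at an interior point $x_0' \in B^{n-1}_1(0)$, and the classical strong maximum principle forces $w$ to be constant on the connected disk, hence identically zero. In the boundary tangency case, tangency of the two hypersurfaces at $x_0 = (x_0', u_1(x_0'))$ with $x_0' \in \partial B^{n-1}_1(0)$ yields both $w(x_0') = 0$ and $\nabla w(x_0') = 0$. Since the round disk satisfies the interior sphere condition at every boundary point, Hopf's boundary point lemma applies and gives either $w \equiv 0$ or a strictly signed outward normal derivative $\partial_\eta w(x_0')$; the latter is precluded by $\nabla w(x_0') = 0$. Either way $u_1 \equiv u_2$, so $\Sigma_1 = \Sigma_2$.

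The main technical point to watch is that the strong maximum principle and Hopf's lemma must be applied without any sign assumption on the zeroth-order coefficient of $\tilde L$; this is legitimate because the relevant extremum of $w$ is equal to $0$, which is a standard corollary of the strong maximum principle. The regularity assumptions $g \in \mathcal{C}^2$, $F \in \mathcal{C}^1$, and $u_i \in \mathcal{C}^2$ ensure that all coefficients appearing in $\bar L$ are bounded and continuous, which is all that is needed; no further delicacy arises.
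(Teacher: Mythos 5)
Your argument is correct and is the standard proof of this comparison principle: writing both hypersurfaces as graphs over the same disk, differencing the two prescribed-mean-curvature inequalities along the linear interpolation $u_t$ to get a linear uniformly elliptic inequality for $w=u_2-u_1\ge 0$ (the regularity $g\in\mathcal{C}^2$, $F\in\mathcal{C}^1$, $u_i\in\mathcal{C}^2$ indeed suffices for bounded coefficients and uniform ellipticity on the compact region swept out), and then invoking the interior strong maximum principle, respectively the Hopf boundary point lemma at a boundary tangency, with the zero-extremum observation disposing of the sign condition on the zeroth-order coefficient. The paper itself gives no proof of Proposition \ref{max-prin} but instead points to \cite[Lemma 1]{Schoen:1983}, \cite[Proposition 3.1]{Andersson-Metzger:2009}, and \cite[Proposition 2.4]{Ashtekar-Galloway:2005}, whose arguments run along exactly these lines, so your proposal supplies the omitted proof in essentially the intended way.
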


Let $g_1, g_2$ be two metrics on an $n$-dimensional manifold $M$ that are related by $$g_2=u^{\frac{4}{n-2}}g_1.$$ The scalar curvatures of these metrics are related by
\begin{equation}\label{scalar-curvature}
R_2 = u^{-\frac{n+2}{n-2}}\left(R_1 u-\tfrac{4(n-1)}{n-2}     \Delta_1 u \right).
\end{equation}
Let $\Sigma \subset M$ be a two-sided hypersurface. If $\nu_1$ is a unit normal with respect to $g_1$, then $\nu_2=u^{\frac{-2}{n-2}}\nu_1$ is a unit normal with respect to $g_2$. The corresponding mean curvatures are related by 
\begin{equation}\label{mean-curvature}
H_2 = u^{\frac{-2}{n-2}}\left(H_1 +\tfrac{2(n-1)}{n-2}u^{-1}\nabla_{\nu_1}u\right).
\end{equation}
If $(M, g_1)$ is an asymptotically flat $n$-dimensional Riemannian manifold and if $u=1+a|x|^{2-n}+O^{2, \alpha}(|x|^{1-n})$ is $\C^{2,\alpha}$, then $(M,g_2)$ is also asymptotically flat. The energies of $(M, g_1)$ and $(M, g_2)$ are related by the formula
\begin{align}
E_2 &= E_1 -  \tfrac{2}{(n-2)\omega_{n-1}} \lim_{r\to\infty} \int_{|x|=r}u \nabla_{\nu_1} u \,d\area_1\label{mass-change-u}\\
&=E_1+2a.\label{mass-change}
\end{align}

The proof of the positive mass theorem in the time-symmetric case uses regularity and compactness properties of area minimizing hypersurfaces. By contrast, MOTS are not known to obey a useful variational principle. We will use the theory of almost minimizing currents as a viable substitute in our proof of Theorem \ref{thm:PMT}. 

\begin{defin} [\cite{Duzaar-Steffen:1993}] 
Let $(M,g)$ be a complete $n$-dimensional Riemannian manifold and let $T$ be an integral $k$-current in $M$. 
Let $U\subset M$ be an open set such that $\spt (\partial T) \cap U = \emptyset$.
Then $T$ is $\lambda$-\emph{minimizing} in $U$ if for every integral $(k+1)$-current $X$ with support in $U$ we have that
\[\mathbf{M}_U(T) \le \mathbf{M}_U(T+\partial X) + \lambda\mathbf{M}_U(X).\]
Here, $\mathbf{M}_U$ denotes the mass of a current in $U$.  
\end{defin}
This particular almost minimizing property was introduced and studied systematically by F. Duzaar and K. Steffen in \cite{Duzaar-Steffen:1993}. 
In \cite{Eichmair:2009-Plateau}, the first named author of the present article observed that the
$\lambda$-minimizing property is a natural feature of the MOTS that arise in the existence theory of the Plateau problem developed in \cite{Eichmair:2009-Plateau}, despite the absence of a useful variational principle.
The  properties of $\lambda$-minimizing currents  that we use in the proof of Theorem \ref{thm:PMT} below  are summarized in \cite[Appendix A]{Eichmair:2009-Plateau}.  

%%%%%%%%%%%%%%%%%%%%%%%%%%%%%%%%%%%%%%%%%%%%%%%%%
%%%%%%%%%%%%%%%%%%%%%%%%%%%%%%%%%%%%%%%%%%%%%%%%%
%%%%%%%%%%%%%%%%%%%%%%%%%%%%%%%%%%%%%%%%%%%%%%%%%

\section{Construction of MOTS}\label{section-construction}

Our proof of Theorem \ref{thm:PMT} will be by induction on dimension and contradiction.  Let $3\le n<8$, and suppose there exists an $n$-dimensional asymptotically flat initial data set $(M, g, k)$ of type $(p,q,q_0,\alpha)$ satisfying the dominant energy condition, but $E<|P|$.   For the case $n=3$, we will obtain a contradiction to the Gauss-Bonnet Theorem in Section \ref{dimension-three}, and for $3<n<8$, we will obtain a contradiction to the time-symmetric case of Theorem \ref{thm:PMT} in dimension $n-1$.  By the density theorem (Theorem \ref{th:density-theorem-section}), we can assume without loss of generality, that $(g,k)$ has harmonic asymptotics and satisfies the strict dominant energy condition $\mu>|J|_g$.  Specifically, we can choose asymptotically flat coordinates $(x^1, \ldots, x^n)$
on $M\smallsetminus K \cong_{x} \mathbb{R}^n \smallsetminus B$ for some closed ball $B$ such that on  $\mathbb{R}^n \smallsetminus B$, we have, for $i,j= 1, 2, \ldots, n$,  
\begin{align*}
g_{ij}&=u^{\frac{4}{n-2}}\delta_{ij}\\
\pi_{ij}& = u^{\frac{2}{n-2}}\left[(L_Y \delta)_{ij} -(\Div_\delta Y)\delta_{ij}\right]
\end{align*}
for some $u, Y\in \C^{2,\alpha}_{2-n}$ satisfying
\begin{align*}
u(x)& = 1+ a|x|^{2-n}+ O^{2, \alpha}(|x|^{1-n})\\
Y_i(x)&=b_i|x|^{2-n}+O^{2, \alpha}(|x|^{1-n}).
\end{align*}
Without loss of generality, we assume that $P = (0, \ldots, 0, - |P|)$. 

\subsection{Existence of horizontal barriers}

\begin{lem}\label{barriers}
Let $(M,g,k)$ be as described above. 
Then, for sufficiently large $\Lambda$, we have that $\theta^+_{\{x^n = \Lambda\}}>0$ and $\theta^+_{\{x^n = -\Lambda\}}<0$ where the expansion is computed with respect to the upward pointing unit normal. 
\end{lem}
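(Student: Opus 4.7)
The plan is to expand $\theta^+ = H + \tr_\Sigma k$ directly in the asymptotic coordinates and show that its leading-order term, which scales as $\Lambda \cdot |x|^{-n}$, has the right sign while all error terms are a factor of $\Lambda$ smaller. First I would choose $\Lambda$ large enough that $\{x^n = \pm \Lambda\}$ lies entirely inside $\rr^n \smallsetminus B$, so the harmonic asymptotic formulas apply everywhere on the planes.

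For the mean curvature piece, the plane $\{x^n = \Lambda\}$ is flat in $\delta$, so applying (\ref{mean-curvature}) to the conformal relation $g = u^{4/(n-2)}\delta$ yields
\[
H = \tfrac{2(n-1)}{n-2}\, u^{-n/(n-2)}\, \partial_n u.
\]
The expansion $u = 1 + a|x|^{2-n} + O^{2,\alpha}(|x|^{1-n})$ gives $\partial_n u = a(2-n)\Lambda |x|^{-n} + O(|x|^{-n})$ on the plane, and hence $H = -2(n-1)\, a\, \Lambda\, |x|^{-n} + O(|x|^{-n})$. For the current trace, using $k = \pi - \tfrac{1}{n-1}(\tr_g \pi) g$ together with the harmonic asymptotics of $\pi$ in terms of $Y_i = b_i |x|^{2-n} + O^{2,\alpha}(|x|^{1-n})$, and collecting the $n-1$ tangential contributions on $\{x^n = \Lambda\}$, a short computation yields
\[
\tr_\Sigma k = (n-2)|x|^{-n}\bigl[\, b_n \Lambda - \textstyle\sum_{I<n} b_I x^I \,\bigr] + O(|x|^{-n}).
\]

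Next I would invoke the standard identifications that come from evaluating the ADM surface integrals on $\{|x|=r\}$ against the leading asymptotics: $E = 2a$ and $P_i = -\tfrac{n-2}{n-1}\, b_i$. Since we arranged $P = (0, \ldots, 0, -|P|)$, we have $b_I = 0$ for $I < n$ and $b_n = \tfrac{n-1}{n-2}|P|$, so $\tr_\Sigma k = (n-1)|P|\,\Lambda\, |x|^{-n} + O(|x|^{-n})$. Adding the two pieces gives
\[
\theta^+_{\{x^n = \Lambda\}} = (n-1)\,(|P| - E)\,\Lambda\, |x|^{-n} + O(|x|^{-n}).
\]
On $\{x^n = \Lambda\}$ one has $|x| \geq \Lambda$, so the error is bounded pointwise by $C\Lambda^{-n}$ while the leading term is at least $c(|P|-E)\Lambda^{1-n}$; since $|P| > E$, choosing $\Lambda$ large forces $\theta^+ > 0$ uniformly. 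The plane $\{x^n = -\Lambda\}$ is handled identically with $x^n$ replaced by $-\Lambda$ throughout; this reverses the signs of both $\partial_n u|_{\Sigma}$ and of the $b_n \Lambda$ contribution to $\tr_\Sigma k$, producing $\theta^+ = -(n-1)(|P|-E)\Lambda |x|^{-n} + O(|x|^{-n}) < 0$.

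The main obstacle is really just careful bookkeeping: showing that the errors coming from $u^{-n/(n-2)}$ not being identically $1$, from the $O^{2,\alpha}(|x|^{1-n})$ remainder in $\partial u$ and $\partial Y$, and from using $g$-traces versus $\delta$-traces, are all of order $|x|^{-n}$ rather than $\Lambda |x|^{-n}$. The extra factor of $\Lambda$ present in the main term but absent from the errors is precisely what drives the conclusion, and is the reason one needs harmonic asymptotics rather than merely general asymptotic flatness.
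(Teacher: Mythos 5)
Your proof is correct and follows essentially the same route as the paper: identify $a=\tfrac{E}{2}$ and $b_i=-\tfrac{n-1}{n-2}P_i$ from the ADM surface integrals, expand $H$ and $\tr_{\Sigma}k$ in the harmonic-asymptotics chart, and conclude $\theta^+_{\{x^n=\pm\Lambda\}}=\pm(n-1)(|P|-E)\Lambda|x|^{-n}+O(|x|^{-n})$, with the sign forced for large $\Lambda$ since $E<|P|$. One small caution: the leading term is \emph{not} bounded below by $c(|P|-E)\Lambda^{1-n}$ uniformly over the plane (it decays as $|x'|\to\infty$), so the comparison with the error must be made pointwise by factoring out $|x|^{-n}$ — which is precisely the observation in your closing paragraph that the main term carries an extra factor of $\Lambda$ relative to the $O(|x|^{-n})$ error.
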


\begin{proof}
It follows from (\ref {mass-change}) that the $|x|^{2-n}$ coefficient of the function $u$ is just $a=\frac{E}{2}$. We claim that the $|x|^{2-n}$ coefficient of $Y_i$ is $b_i=-\tfrac{n-1}{n-2}P_i$, i.e. that $b_n=\tfrac{n-1}{n-2}|P|$ and $b_i=0$ for $i<n$.  To see this, note that
\begin{align*}
(n-1)\omega_{n-1}P_i
&=  \lim_{r\to\infty}  \int_{|x|=r} \pi_{ij}\nu_0^j\, d\area_0 \\
&= \lim_{r\to\infty} \int_{|x|=r} \left(Y_{i,j}+Y_{j,i} -(\Div_\delta Y)\delta_{ij}\right)\nu_0^j\, d\area_0 \\
&= \lim_{r\to\infty}  \int_{|x|=r} (2-n) |x|^{1-n}\left[ b_i(\nu_0)_j + b_j (\nu_0)_i -  b_k \nu_0^k \delta_{ij}+O(|x|^{-1})\right]\nu_0^j\,d\area_0\\
&= -(n-2)b_i\omega_{n-1}. 
\end{align*}

\noindent We claim that
\begin{equation}\label{plane-theta}
\theta^+_{\{x^n = \Lambda\}} =  H_{\{x^n = \Lambda\}} + \tr_{\{x^n = \Lambda\}} (k)= (n-1)(|P|-E)\Lambda|x|^{-n} +O(|x|^{-n}).
\end{equation}
To see this, we use harmonic asymptotics and formula (\ref{mean-curvature}) to compute
\begin{align*}
H_{\{x^n = \Lambda\}}
& = \tfrac{2(n-1)}{n-2}u^{\frac{-2}{n-2}-1} \partial_n u\\
& =  \tfrac{2(n-1)}{n-2}u^{\frac{-2}{n-2}-1}  [ (2-n)a |x|^{-n} x^n+O(|x|^{-n})]\\
& =  -2(n-1)a |x|^{-n}x^n+O(|x|^{-n})\\
& =   -(n-1)E |x|^{-n}\Lambda+O(|x|^{-n}).
\end{align*}
To compute $\tr_{\{x^n = \Lambda\}} (k)$, first note that 
\[ k_{ij}= u^{\frac{2}{n-2}} \left[(L_Y \delta)_{ij} -\tfrac{1}{n-1}(\Div_\delta Y)\delta_{ij}\right].\]
So we have
\begin{align*}
\tr_{\{x^n = \Lambda\}} (k) & = \sum_{i,j=1}^{n-1} g^{ij}k_{ij}\\
&= \sum_{i,j=1}^{n-1} u^{\frac{-2}{n-2}}\delta^{ij}[ Y_{i,j}+Y_{j,i} -\tfrac{1}{n-1}(\Div_\delta Y)\delta_{ij}]\\
&= \sum_{i,j=1}^{n-1} u^{\frac{-2}{n-2}}\delta^{ij}\left[ \tfrac{-1}{n-1}Y_{n,n}\delta_{ij}+O(|x|^{-n})\right]\text{ since }Y_i=O^{2, \alpha}(|x|^{1-n})\text{ for }i<n\\
&=  -Y_{n,n}+O(|x|^{-n})\\
&= (n-2)b_n|x|^{-n}x^n+O(|x|^{-n})\\
&= (n-1)|P| |x|^{-n}\Lambda+O(|x|^{-n}),
\end{align*}
completing the proof of (\ref{plane-theta}).  Note that (\ref{plane-theta}) shows that that for large enough $\Lambda$ one has $\theta^+_{\{x^n = \Lambda\}}>0$.  The proof that $\theta^+ _{\{x^n = - \Lambda\}} < 0$ is similar.
\end{proof}

%%%%%%%%%%%%%%%%%%%%%%%%%%%%%%%%%%%%%%%%%%%%%%%%%
%%%%%%%%%%%%%%%%%%%%%%%%%%%%%%%%%%%%%%%%%%%%%%%%%
%%%%%%%%%%%%%%%%%%%%%%%%%%%%%%%%%%%%%%%%%%%%%%%%%

\subsection{Existence of MOTS} \label{sec:existenceMOTS}

\begin{notation}
We now fix $\Lambda$ large enough so that Lemma \ref{barriers} applies. 
For large $\rho$, we define $C_\rho := K \cup x^{-1}\{(x^1, \ldots, x^{n-1}, x^n)\,|\, (x^1)^2 + \ldots + (x^{n-1})^2  < \rho^2\}$ to be the region horizontally bounded by a vertical coordinate cylinder of radius $\rho$, and we define $C_{\rho,h}$ to be the part of $C_\rho$ lying between the planes $x^n=\pm h$.  Define $\Gamma_{\rho, h} := \partial C_\rho \cap \{ x^n = h\}$.
 \end{notation}

\begin{lem} \label{MOTS-existence}
Let $(M,g,k)$ and $\Lambda$ be as described above. For every sufficiently large $\rho$ and all $h \in [- \Lambda, \Lambda]$ there exists a stable $\C^{3,\alpha}$ MOTS $\Sigma_{\rho,h}$ in $C_{\rho,\Lambda}$ whose boundary equals $\Gamma_{\rho,h}$ and which meets $\partial C_\rho$ transversely. Every $\Sigma_{\rho,h}$ is a $\lambda$-minimizing boundary in $C_{\rho,2\Lambda}$ where $\lambda$ depends only on $|k|_{\C^0}$. Moreover, we can choose $\{\Sigma_{\rho, h}\}_{|h|\leq\Lambda}$ so that $\Sigma_{\rho, h_1}$ lies strictly below $\Sigma_{\rho, h_2}$ as a boundary in $C_{\rho,\Lambda}$ if $- \Lambda \leq h_1 < h_2 \leq \Lambda$.
\end{lem}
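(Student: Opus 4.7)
The plan is to invoke the Plateau-type existence theory for MOTS with prescribed boundary developed in \cite{Eichmair:2009-Plateau}. For each fixed $\rho$ large and $h \in [-\Lambda, \Lambda]$, I would set up the following Plateau problem: find a MOTS $\Sigma_{\rho,h}$ in $C_{\rho,\Lambda}$ with $\partial \Sigma_{\rho,h} = \Gamma_{\rho,h}$. By Lemma \ref{barriers}, the horizontal cross-sections $\{x^n = \pm\Lambda\} \cap C_\rho$ furnish the required outer and inner trapping barriers (with their signs of $\theta^+$ matching what the existence theorem demands), and the lateral cylinder $\partial C_\rho$ meets $\Gamma_{\rho,h}$ transversely. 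Applying the main existence theorem of \cite{Eichmair:2009-Plateau} then produces an integral $(n-1)$-current whose support is the desired MOTS and which is $\lambda$-minimizing in the slightly larger region $C_{\rho, 2\Lambda}$, with $\lambda$ depending only on $|k|_{\mathcal{C}^0}$.

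Regularity of $\Sigma_{\rho,h}$ follows from the $\lambda$-minimizing property. Since $n-1 \leq 6$, standard regularity theory for $\lambda$-minimizing boundaries (\cite[Appendix A]{Eichmair:2009-Plateau}) shows that the support is a smoothly embedded hypersurface with boundary, and elliptic Schauder theory applied to the MOTS equation $\theta^+ = 0$ with $g \in \mathcal{C}^{2,\alpha}$ and $k \in \mathcal{C}^{1,\alpha}$ then upgrades this to class $\mathcal{C}^{3,\alpha}$. The stability of $\Sigma_{\rho,h}$ in the MOTS sense (with Dirichlet conditions on $\Gamma_{\rho,h}$) is likewise part of the conclusion of \cite{Eichmair:2009-Plateau}, since the MOTS is produced as a blow-up limit of level sets of regularized Jang equation graphs that inherit the required spectral condition. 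Transversality to $\partial C_\rho$ along $\Gamma_{\rho,h}$ follows from boundary regularity together with the observation that for large $\rho$ the cylinder $\partial C_\rho$ has expansion of definite sign, so any tangential boundary contact would contradict the strong boundary maximum principle.

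To build the ordered family $\{\Sigma_{\rho,h}\}_{|h|\leq\Lambda}$, I would proceed inductively in $h$. Given $\Sigma_{\rho, h_1}$, for any $h_2 > h_1$ I would re-run the Plateau existence theorem in the subdomain of $C_{\rho,\Lambda}$ lying above $\Sigma_{\rho, h_1}$, taking $\Sigma_{\rho, h_1}$ itself (which has $\theta^+ \equiv 0$) as the new inner barrier and $\{x^n = \Lambda\}$ as the outer barrier. The resulting $\Sigma_{\rho, h_2}$ then lies weakly above $\Sigma_{\rho, h_1}$, and strict separation follows from the strong maximum principle (Proposition \ref{max-prin}) applied in the setup of the remark following it: an interior tangency, or tangential contact at $\partial C_\rho$, would force the two MOTS to coincide globally, contradicting $h_1 \neq h_2$. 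A parallel construction sweeping downward from $h = \Lambda$ provides the ordered family for $h < h_0$.

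The main obstacle is essentially packaged inside the Plateau theorem of \cite{Eichmair:2009-Plateau}, which itself rests on a delicate blow-up analysis of solutions to regularized Jang's equations together with the geometric measure theory of $\lambda$-minimizing currents. The genuinely new work in this lemma is largely bookkeeping: verifying that the barriers in Lemma \ref{barriers}, the cylinder $\partial C_\rho$, and the boundary circles $\Gamma_{\rho, h}$ fit the hypotheses of that theorem uniformly in $h \in [-\Lambda, \Lambda]$, and confirming that $\lambda$ may be chosen independently of $h$, which is immediate since it depends only on $|k|_{\mathcal{C}^0}$ on the compact region $C_{\rho, 2\Lambda}$.
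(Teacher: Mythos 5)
Your overall strategy is the paper's: invoke the MOTS Plateau existence theory of \cite{Eichmair:2009-Plateau, Eichmair:2010} with the barriers of Lemma \ref{barriers}, get regularity from the $\lambda$-minimizing property plus Schauder theory, and get strict separation from Proposition \ref{max-prin}. However, your setup of the Plateau problem has a genuine gap. You solve in $C_{\rho,\Lambda}$ with the faces $\{x^n=\pm\Lambda\}$ as barriers. For $h=\pm\Lambda$ (which the lemma requires, and which is used later, e.g.\ in Lemma \ref{F-sign}) the curve $\Gamma_{\rho,h}$ then lies on the corner edge of your region, so the top/bottom trapping decomposition of the boundary degenerates; moreover, solving in $C_{\rho,\Lambda}$ can only give the $\lambda$-minimizing property relative to $C_{\rho,\Lambda}$, not in $C_{\rho,2\Lambda}$ as you assert and as the later convergence arguments need (interior points of $\Sigma_{\rho,h}$ must be interior to the region of almost-minimality). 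You also never address the corners where $\partial C_\rho$ meets the horizontal faces, where the hypotheses of the existence theorem are not literally satisfied. The paper instead poses the Plateau problem in the larger region $C_{\rho,2\Lambda}$, deals with the corners explicitly (rounding them off, or combining the Jang-equation barriers as in \cite[Lemma 4.1]{Eichmair:2010}), and only afterwards uses that \emph{every} plane above $x^n=\Lambda$ and below $x^n=-\Lambda$ is a barrier, together with the maximum principle, to confine each $\Sigma_{\rho,h}$ to $C_{\rho,\Lambda}$.

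The second gap is the ordering. Your inductive scheme uses the previously constructed $\Sigma_{\rho,h_1}$, which satisfies $\theta^+\equiv 0$, as the lower barrier; the existence theory you cite requires strictly trapped barriers, and, more seriously, an induction ``in $h$'' over the continuum $[-\Lambda,\Lambda]$ does not produce a family ordered for \emph{every} pair $h_1<h_2$ --- each surface is only comparable to the particular surfaces used in its own construction. The paper obtains the ordering from inside the construction: the supersolutions of the regularized Jang equation forcing blow-up can be chosen pointwise decreasing in $h$, so the Perron solutions decrease in $h$ and their epigraphs, hence the enclosed regions $\Omega_{\rho,h}$, are nested; strictness then follows from the maximum principle as you say. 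Two smaller points: stability is not part of the conclusion of \cite{Eichmair:2009-Plateau} --- the paper invokes the separate result \cite{Eichmair-Metzger:2011c} to arrange it --- and your transversality argument presupposes boundary regularity, which in the paper is itself obtained by bending $\partial C_\rho$ slightly inward above and below $\Gamma_{\rho,h}$ to bound the contact angle away from zero and then applying Allard's boundary regularity theorem \cite{Allard:1975} to write $\Sigma_{\rho,h}$ as a vertical graph near $\partial C_\rho$.
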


\begin{remark}
The regularity of the MOTS $\Sigma_{\rho, h}$ is the only place in the proof of Theorem \ref{thm:PMT} where the assumption $n<8$ is used.  For $n>8$, the lemma still holds except that  the $\lambda$-minimizing boundaries $\Sigma_{\rho, h}$ are only regular away from a thin singular set. \end{remark}

%%%
\begin{figure*}[top]
\begin{picture}(0, 195)(-25, 0)
			\includegraphics[scale=0.65]{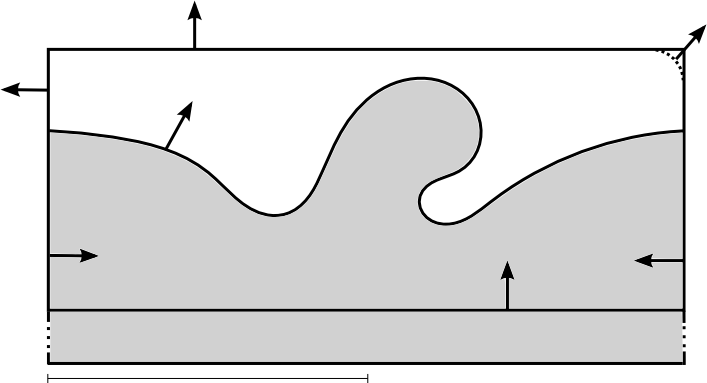}

			\put(-255 , 112){$\theta_{\Sigma_{\rho, h}}^+ = 0$}
			\put(-287 , 133){$\nu$}
			\put(-160 , 140){\Large$\Sigma_{\rho, h}$}
			\put(-10, 120){ \Large$\Gamma_{\rho, h} $}
			\put(-379, 120){ \Large$\Gamma_{\rho, h} $ }
			\put(-270,60){\Large$\Omega_{\rho,h}$}
			\put(-376,42){\Large$\partial C_{\rho}$}
			\put(-264,-5){$\rho$}

			\put(-375,162){\scriptsize$\theta^+ > 0$}
			\put(-260,185){\scriptsize$\theta^+ > 0$}
			\put(-40,185){\scriptsize$H \gg 1$}

			\put(-43,72){\scriptsize$\theta^+ < 0$}
			\put(-96,48){\scriptsize$\theta^+ < 0$}
			\put(-340,75){\scriptsize$\theta^+ < 0$}

			\put(-198, 181){$\{ x^n = \Lambda\}$}
			\put(-200, 47){$\{ x^n = - \Lambda\}$}
			\put(-200, 17){$\{ x^n = - 2\Lambda\}$}
\end{picture}
\caption{The expansion $\theta^+$ is computed with respect to the indicated unit normals.}
\end{figure*}
%%%

\begin{proof} 
First observe that by the decay conditions on $g$ and $k$, the coordinate cylinder $\partial C_\rho$ has $\theta^+>0$ with respect to the outward normal and $\theta^+<0$ with respect to the inward normal.

We would like to solve the Plateau problem for MOTS with boundary $\Gamma_{\rho,h}$ for each $h\in[-\Lambda,\Lambda]$.  Note that $\Gamma_{\rho,h}$ divides $\partial C_{\rho,2\Lambda}$ into a top piece and a bottom piece.  According to \cite{Eichmair:2010}, the MOTS Plateau problem is solvable if the top piece has $\theta^+>0$ with respect to the outward normal of $\partial C_{\rho,2\Lambda}$ and the bottom piece has $\theta^+<0$ with respect to the inward normal of $\partial C_{\rho,2\Lambda}$. By the observation above and Lemma \ref{barriers}, the two pieces of $\partial C_{\rho,2\Lambda}$ satisfy the desired trapping conditions, with the exception of the corners where $\partial C_\rho$ intersects $\{x^n=\pm2\Lambda\}$.  See Figure 1.  Intuitively, the corners do not cause a problem because the (singular) distributional mean curvature there has a favorable sign.  Therefore one could ``round-off'' the corners as in Figure 1.  Alternatively, we observe that the proof in \cite{Eichmair:2009-Plateau} can easily accommodate the corners:  Just as in that proof, we use the trapping of the cylindrical and horizontal pieces of $\partial C_{\rho,2\Lambda}$ to construct barriers that have appropriate blow-up behavior for Jang's equation, and then  combine these barriers as in \cite[Lemma 4.1]{Eichmair:2010}.  
Specifically, there exists a $\C^{3,\alpha}$ family of MOTS $\{\Sigma_{\rho, h}\}_{|h| \leq \Lambda}$ with $\partial \Sigma_{\rho, h} = \Gamma_{\rho, h}$ such that each $\Sigma_{\rho, h}$ is a $\lambda$-minimizing boundary in $C_{\rho,2\Lambda}$ where $\lambda = \lambda(|k|_{\C^{0}})$. 

Moreover, it can be seen from the construction in \cite{Eichmair:2009-Plateau} that the regions $\Omega_{\rho, h}\subset C_{\rho,2\Lambda}$ bounded by the $\Sigma_{\rho, h}$'s can be taken to be ordered, so that $\Omega_{\rho, h_1} \subset \Omega_{\rho, h_2}$ whenever $- \Lambda \leq h_1 \leq h_2 \leq \Lambda$.  To see this, note that the supersolutions $\overline{u}_{t, \rho, h}$ used in the construction of solutions to the regularized Jang's equation that lead to blow up in \cite[Lemma 4.1, bottom of p. 570]{Eichmair:2009-Plateau} can be taken to be pointwise decreasing in the parameter $h$, so that the corresponding Perron solutions $\overline{u}^P_{t, \rho, h}$ are decreasing in $h$ and hence their epigraphs are increasing. It has been shown in \cite{Eichmair-Metzger:2011c} that we may assume further that $\Sigma_{\rho, h}$ is stable in the sense of MOTS. 

Standard barriers for $\Sigma_{\rho, h}$ can be constructed from the trapped boundary $\partial C_\rho$ by slight ($\C^2$)-inward perturbation above respectively below its boundary $\Gamma_{\rho, h}$, locally uniformly in $(\rho, h)$, so that the angle (in the underlying Euclidean coordinate system) at which $\Sigma_{\rho, h}$ meets $\partial C_\rho$ is bounded away from $0$. For the special case of MOTS this inwards bending is explained in some detail in \cite[Section 3]{Eichmair:2009-Plateau}. Together with the $\lambda$-minimizing property and Allard's boundary regularity theorem \cite{Allard:1975} it follows that near $\partial C_\rho$ the hypersurface $\Sigma_{\rho, h}$ can be written as a vertical $\C^{1, \alpha}$ graph above $\{x^n = h\}$. In particular, we see that $\Sigma_{\rho,h}$ intersects $C_\rho$ transversely. Higher regularity of the defining function---which solves a prescribed mean curvature equation---then follows from Schauder theory in a standard way.

Finally, since each horizontal plane above $x^n=\Lambda$ has $\theta^+>0$ and each horizontal plane below $x^n = - \Lambda$ has $\theta^+ < 0$ by Lemma \ref{barriers} it follows from Lemma \ref{max-prin} that each $\Sigma_{\rho,h}$ actually lies in $C_{\rho,\Lambda}$. 
\end{proof}

\begin{remark} We are grateful to Brian White for helping us clarify the
following issue: When $M$ is an $n$-dimensional $\C^{1, \alpha}$
manifold and $g$ is a complete $\C^{\alpha}$ Riemannian metric on
it, then the standard interior Allard-type $\C^{1, \alpha}$
regularity of (almost) minimizing boundaries away from a set of Hausdorff
dimension at most $8$ holds. This was shown by J.\ Taylor in
\cite{Taylor:1976} (this part of the discussion in her paper applies to
$n$--dimensional manifolds). When the manifold is $\C^2$ and the
metric is Lipschitz, then this follows also from the work of R.\ Schoen and
L.\ Simon \cite{Schoen-Simon:1982} (for almost minimizers this was pointed
out by B. White in \cite[p. 498]{White:1991}). When the manifold is
$\C^4$ and the metric $\C^3$ so that the Nash embedding
theorem provides an isometric embedding of $(M, g)$ into a high dimensional
Euclidean space, then this also follows directly upon applying the
Euclidean regularity theory as in \cite{GMT}.  In the preceding
lemma, note that once we know that our surfaces are $\C^{1,
\alpha}$, we can then apply Schauder theory to the MOTS equation to obtain $\C^{3, \alpha}$ regularity.  The boundary regularity follows more easily
because the metric is conformal to the Euclidean metric there. 
\end{remark}

\subsection{Convergence of MOTS}\label{sec:convergenceMOTS}

Although there is no reason to expect the family $\{\Sigma_{\rho, h}\}_{|h| \leq \Lambda}$ to form a $\C^{3,\alpha}$ foliation (even when there are no topological obstructions), we can still prove a partial regularity result.
Its proof is similar to the proof of regularity of the outermost MOTS (established in \cite{Andersson-Metzger:2009} for $n=3$ and then in \cite{Eichmair:2010} for $3 \leq n <7$) but simpler, because the two-sided $\lambda$-minimizing property ensures embeddedness. 

\begin{lem}\label{MOTS-convergence} Let $\{\Sigma_{\rho, h}\}_{|h| \leq \Lambda}$ be as in Lemma \ref{MOTS-existence}. For each $h_0\in(-\Lambda,\Lambda]$, the upper envelope of $\{ \Sigma_{\rho, h}\}_{h<h_0}$ is a $\C^{3,\alpha}$ MOTS with boundary $\Gamma_{\rho, h_0}$ which we denote by $\underline{\Sigma}_{\rho, h_0}$.  By convention we define $\underline{\Sigma}_{\rho, -\Lambda}:=\Sigma_{\rho, -\Lambda}$.  Moreover, $\lim_{h \nearrow h_0}\Sigma_{\rho, h}=\underline{\Sigma}_{\rho, h}$ in the $\C^{3,\alpha}$ topology.  We define $\overline{\Sigma}_{\rho, h_0}$ as the lower envelope of $\{\Sigma_{\rho, h}\}_{h>h_0}$ for $h_0 \in [-\Lambda, \Lambda)$ and $\overline{\Sigma}_{\rho, \Lambda}:= \Sigma_{\rho, \Lambda}$ and note that analogous statements hold for these hypersurfaces.
\end{lem}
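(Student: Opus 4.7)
The plan is to realize $\underline{\Sigma}_{\rho,h_0}$ as the relative boundary in $C_{\rho,2\Lambda}$ of the open region
\[
\underline{\Omega}_{\rho,h_0} \;:=\; \bigcup_{-\Lambda \leq h < h_0} \Omega_{\rho,h},
\]
and to deduce its regularity together with the MOTS property by passing to the limit in the $\lambda$-minimizing family. By the ordering in Lemma \ref{MOTS-existence}, the sets $\Omega_{\rho,h}$ form a monotone increasing family, so $\mathbf{1}_{\Omega_{\rho,h}} \to \mathbf{1}_{\underline{\Omega}_{\rho,h_0}}$ in $L^1_{\mathrm{loc}}(C_{\rho,2\Lambda})$ as $h \nearrow h_0$, and the corresponding integral currents converge in the flat topology.

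First I would show that the relative boundary of $\underline{\Omega}_{\rho,h_0}$ inside $C_{\rho,2\Lambda}$ is itself $\lambda$-minimizing with the same $\lambda$ as in Lemma \ref{MOTS-existence}; this follows from the closure of the $\lambda$-minimizing property under flat convergence (see \cite[Appendix A]{Eichmair:2009-Plateau}). Since $3 \le n < 8$, the interior regularity theory recalled in the remark after Lemma \ref{MOTS-existence} then guarantees that its support, which we call $\underline{\Sigma}_{\rho,h_0}$, is an embedded $\mathcal{C}^{1,\alpha}$ hypersurface in the interior. Uniform mass bounds and the $\lambda$-minimizing property further provide Allard-type interior $\mathcal{C}^{1,\alpha}$ estimates, uniform in $h$ close to $h_0$, for $\Sigma_{\rho,h}$ over tangent planes to $\underline{\Sigma}_{\rho,h_0}$. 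Because each graphing function solves the quasilinear elliptic MOTS equation $H + \tr_\Sigma k = 0$ with $\mathcal{C}^{1,\alpha}$ coefficients (thanks to the regularity hypotheses on $g$ and $k$), Schauder theory upgrades the convergence on compact interior pieces from $\mathcal{C}^{1,\alpha}$ to $\mathcal{C}^{3,\alpha}$, and passing to the limit in $\theta^+_{\Sigma_{\rho,h}} = 0$ shows that $\underline{\Sigma}_{\rho,h_0}$ is itself a MOTS.

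For the boundary behavior I would invoke the standard barriers already used in the proof of Lemma \ref{MOTS-existence}: a slight $\mathcal{C}^2$-inward perturbation of $\partial C_\rho$ on either side of $\Gamma_{\rho,h}$, varying continuously in $h$, forces $\Sigma_{\rho,h}$ to meet $\partial C_\rho$ at an angle bounded away from zero, uniformly for $h$ near $h_0$. Combined with Allard's boundary regularity \cite{Allard:1975} and the $\lambda$-minimizing property, this yields uniform local $\mathcal{C}^{1,\alpha}$ graphical representations of $\Sigma_{\rho,h}$ over $\{x^n = h\}$ in a collar neighborhood of $\partial C_\rho$. Taking $h \nearrow h_0$ along any sequence produces $\underline{\Sigma}_{\rho,h_0}$ as a $\mathcal{C}^{1,\alpha}$ hypersurface up to $\partial C_\rho$, meeting it transversely, with boundary exactly $\Gamma_{\rho,h_0}$. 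Schauder theory applied to the MOTS equation with this Dirichlet datum promotes the boundary regularity and convergence to $\mathcal{C}^{3,\alpha}$.

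The main obstacle is excluding loss of boundary at $\partial C_\rho$: a priori the limiting hypersurface could recede into the interior of $C_{\rho,2\Lambda}$ or accumulate over a proper subset of $\Gamma_{\rho,h_0}$, and either scenario would destroy the identification $\partial \underline{\Sigma}_{\rho,h_0} = \Gamma_{\rho,h_0}$. The uniform transverse-angle estimate from the preceding step, together with the monotone ordering of the $\Omega_{\rho,h}$, is precisely what rules these scenarios out. The corresponding statements for $\overline{\Sigma}_{\rho,h_0}$ follow by the same argument with $h < h_0$ replaced by $h > h_0$ and all inclusions reversed.
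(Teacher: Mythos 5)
Your proposal is correct and follows essentially the same route as the paper: compactness of the $\lambda$-minimizing boundaries (mass bounds, no mass loss), Allard interior and boundary regularity aided by the uniform barriers at $\partial C_\rho$, Schauder theory applied to the MOTS equation to upgrade to $\mathcal{C}^{3,\alpha}$, and the monotone ordering to identify the limit. The only (harmless) organizational difference is that you realize $\underline{\Sigma}_{\rho,h_0}$ directly as the relative boundary of the monotone union $\bigcup_{h<h_0}\Omega_{\rho,h}$, whereas the paper extracts a subsequential current limit and then uses the ordering to conclude independence of the subsequence and of the approximating sequence.
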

\begin{proof}
Fix $h_0\in(\Lambda,\Lambda]$. Let $- \Lambda \leq h_i \nearrow h_0$ as $i \to \infty$ and pass the $\lambda$-minimizing boundaries $\Sigma_{\rho, h_i}$ to a subsequential limit $\underline{\Sigma}_{\rho, h_0}$. Current convergence is automatic from the mass bounds, varifold convergence follows because there is no mass loss in limits of $\lambda$-minimizing currents, and $\C^{3,\alpha}$ convergence follows from Allard's interior and boundary regularity theorems. Note that since the $\Sigma_{\rho, h_i}$ are increasing, this limit is independent of the choice of subsequence, and hence it is really a limit of the original sequence $h_i$.  The limit does not depend on the choice of sequence $h_i$ for the same reason. 
\end{proof}

\begin{defin} \label{defn:jumpheight} We say that $h_0 \in [- \Lambda, \Lambda]$ is a \emph{jump height} if $\overline{\Sigma}_{\rho, h_0}$ does not equal $\underline{\Sigma}_{\rho, h_0}$. \end{defin}

By the previous lemma, $\Sigma_{\rho, h}$ converges to $\Sigma_{\rho, h_0}$ in $\C^{3,\alpha}$ as $h \to h_0$ precisely when $h_0$ is \emph{not} a jump height. It follows from the preceding lemma and Lemma \ref{max-prin} that $h_0$ is \emph{not} a jump height if and only if the map $h \to \mathcal{L}^n(\Omega_{\rho, h})$ is continuous at $h_0$, where $\mathcal{L}^n(\Omega_{\rho, h})$ is the volume of the enclosed region $\Omega_{\rho,h}$ defined in the proof of Lemma \ref{MOTS-existence}.  Note that this implies that there are at most countably many jump times $h_0 \in [- \Lambda, \Lambda]$.  

We also observe that the MOTS constructed in Lemma \ref{MOTS-existence} may be used to construct complete MOTS with good asymptotics.

\begin{lem}\label{asymptotic-decay}
For any choice of $\rho_j\to\infty$ and $h_j\in[-\Lambda,\Lambda]$, there exists a subsequence of $\Sigma_{\rho_j, h_j}$ that converges in $\C^{3,\alpha}$ on compact subsets of $M$ to a complete
$\C^{3,\alpha}$ properly embedded MOTS $\Sigma_\infty$.  Moreover, there exists a
constant $c\in[-\Lambda,\Lambda]$ such that outside a large compact subset
of $M$, $\Sigma_\infty$ can be written as the Euclidean graph
$\{x^n=f(x')\}$ of some $\C^{3,\alpha}$ function $f(x')=c+O^{3, \alpha}(|x'|^{3-n})$ in the
$(x^1, \ldots, x^{n-1}, x^n) = (x', x^n)$ coordinate system.

\begin{proof}
Existence of a subsequential limit $\Sigma_\infty$ and $\C^{3,\alpha}$ convergence
follow as in the proof of Lemma \ref{MOTS-convergence}. Note that the limit
$\Sigma_\infty$ is again $\lambda$-minimizing. Since each
$\Sigma_{\rho,h_\rho}$ lies between the horizontal planes $x^n=\pm\Lambda$,
so does $\Sigma_\infty$. The estimates below take part in the complement of
a large ball $B$ in $M$ where we have harmonic asymptotics for the metric
so that $g_{ij} = u^{\frac{4}{n-2}} \delta_{ij}$ where $u = 1 + O^{2, 
\alpha}(|x|^{2-n})$. It is not difficult to see from the corresponding
property of $\Sigma_{\rho_j}$ that the vertical projection of
$\Sigma_\infty$ onto the plane $\{x_n = 0\} \cap (M \setminus B)$ is
surjective. Note that (\ref{mean-curvature}) implies that the Euclidean
mean curvature of $\Sigma_\infty$ is $O(|x|^{1-n})$. The
$\lambda$-minimizing property of $\Sigma_\infty$ gives rise to an explicit
estimate of the form $O(|x'|^{-1})$ for the Euclidean area excess of
$\Sigma_\infty$ in large Euclidean balls centered at points $(x', x^n) \in
\Sigma_\infty$ and of radius $|x'|/2$. Together with the Allard regularity
theorem (the version in \cite[Theorem 24.2]{GMT} is particularly convenient
here), this estimate implies that outside some large compact set,
$\Sigma_\infty$ is the graph of a function $f(x')$ such that
$|f(x')|\le\Lambda$ and $f(x') = O^{1, \gamma}(1)$ for some $\gamma \in
(0,1)$.

Since $\Sigma_\infty$ is a MOTS we have that $H_{\Sigma_{\infty}} = -
\tr_{\Sigma_\infty}(k)$. As in \cite[p.\ 32]{Schoen:1989}, this translates
into a prescribed (Euclidean) mean curvature equation for $f$ via a
conformal change (\ref{mean-curvature}). The initial estimate $f =
O^{1, \gamma}(1)$, together with a computation of $\tr_{\Sigma_\infty} k$ as in the proof of Lemma \ref{barriers}, shows that the Euclidean mean curvature of $f$ is
$O^{0, \gamma} (|x'|^{1-n -\gamma})$ for some $\gamma \in (0, 1)$. Standard
asymptotic analysis as in \cite{Meyers:1963, Schoen:1983} shows that there
exists a constant $c \in [-\Lambda, \Lambda]$ such that $f(x') = c + O^{2, 
\gamma}(|x'|^{3 -n})$. (Note that there is no logarithm term when $n-1=2$ because
$f$ is bounded.) Repeating the above analysis with this information shows
that $f(x') = c + O^{3, \alpha}(|x'|^{3-n})$, as asserted. \end{proof}
\end{lem}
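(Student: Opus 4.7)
The plan is to obtain $\Sigma_\infty$ as a geometric-measure-theoretic limit of the $\lambda$-minimizing MOTS $\Sigma_{\rho_j,h_j}$ and then analyze its ends in the harmonic asymptotics chart. First I would note that the $\lambda$-minimizing constant depends only on $|k|_{\mathcal{C}^0}$ and is therefore uniform in $j$; on any fixed compact set $K' \subset M$ this yields uniform mass bounds by comparison with any smooth hypersurface having the same boundary on the cylinder side. A diagonal extraction produces an integral current limit $\Sigma_\infty$, and since $\lambda$-minimization passes to the limit, varifold convergence follows with no loss of mass. Because $3 \le n < 8$, interior Allard-type regularity for $\lambda$-minimizing boundaries (as in the regularity discussion following Lemma~\ref{MOTS-existence}) rules out singularities, so convergence is $\mathcal{C}^{1,\alpha}$ on compact subsets. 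Since each $\Sigma_{\rho_j,h_j}$ satisfies the quasilinear elliptic MOTS equation $\theta^+=0$, the limit does too, and Schauder theory upgrades convergence to $\mathcal{C}^{3,\alpha}$. Lemma~\ref{barriers} together with Lemma~\ref{max-prin} confines each $\Sigma_{\rho_j,h_j}$ to the slab $\{|x^n|\le\Lambda\}$, hence so does $\Sigma_\infty$; combined with the compactness argument, this gives completeness and proper embeddedness.

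For the asymptotic structure, I would work outside a large Euclidean ball $B$ in the harmonic asymptotics chart, where $g=u^{\frac{4}{n-2}}\delta$ and $u=1+O^{2,\alpha}(|x|^{2-n})$. The main graphical step is to show that the vertical projection of $\Sigma_\infty$ onto $\{x^n=0\}\cap(\mathbb{R}^n\setminus B)$ covers the complement of a large disk. This should follow by passing the analogous property to the limit: each $\Sigma_{\rho_j,h_j}$ is a vertical graph near its boundary $\Gamma_{\rho_j,h_j}$ by the boundary regularity in Lemma~\ref{MOTS-existence}, and the $\Sigma_{\rho_j,h}$ fill $C_{\rho_j,\Lambda}$ in an ordered way. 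Combining surjectivity with the Euclidean area excess bound of order $O(|x'|^{-1})$ on Euclidean balls of radius $|x'|/2$ centered at points of $\Sigma_\infty$ (a direct consequence of $\lambda$-minimization and comparison with a tangent disk) and with Allard's regularity theorem, I would conclude that outside a larger compact set $\Sigma_\infty$ is the graph of a function $f(x')$ with $|f|\le\Lambda$ and $f\in\mathcal{C}^{1,\gamma}$.

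The final step is to sharpen the decay of $f$. Rewriting the MOTS equation $H_{\Sigma_\infty}=-\tr_{\Sigma_\infty}k$ in terms of the Euclidean mean curvature of the graph of $f$, using the conformal formula~(\ref{mean-curvature}) and the harmonic expansions of $u$ and $Y$, produces a quasilinear elliptic equation of the form $H^\delta(f)=F(x',f,\nabla f)$. A computation patterned on the proof of Lemma~\ref{barriers}, fed with the initial bound $f=O^{1,\gamma}(1)$, shows $F=O^\gamma(|x'|^{1-n-\gamma})$. I would then apply the standard exterior asymptotic analysis of \cite{Meyers:1963, Schoen:1983} to extract a limiting constant $c\in[-\Lambda,\Lambda]$ and the decay $f(x')=c+O^{2,\gamma}(|x'|^{3-n})$; boundedness of $f$ together with $n-1\ge 2$ excludes the otherwise possible logarithmic term when $n=3$. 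A final bootstrap reinserting this refined decay into the equation upgrades the remainder to $O^{3,\alpha}(|x'|^{3-n})$. The hardest step I expect is justifying the surjectivity of the vertical projection of $\Sigma_\infty$ with enough uniform control to apply Allard at every scale at infinity; once that is in hand, both the interior convergence and the exterior decay analysis are largely standard given the tools already assembled in the paper.
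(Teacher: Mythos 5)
Your proposal follows essentially the same route as the paper's own proof: subsequential convergence via the uniform $\lambda$-minimizing property and Allard regularity (with Schauder upgrading to $\mathcal{C}^{3,\alpha}$), confinement to the slab $\{|x^n|\le\Lambda\}$, surjectivity of the vertical projection, the $O(|x'|^{-1})$ area-excess estimate plus Allard to obtain a $\mathcal{C}^{1,\gamma}$ graph, and then the prescribed Euclidean mean curvature equation with the asymptotic analysis of Meyers and Schoen, followed by a bootstrap to $f=c+O^{3,\alpha}(|x'|^{3-n})$. The argument is correct and matches the paper's proof in structure and in all essential estimates.
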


\begin{cor} When $n>3$, the hypersurface $\Sigma_{\infty}^{n-1} \subset M$
in Lemma \ref{asymptotic-decay} is asymptotically flat and has zero energy
with respect to the induced metric $g_\infty$. 
\end{cor}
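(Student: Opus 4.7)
The plan is to exploit the graph parameterization from Lemma \ref{asymptotic-decay} to supply an asymptotic coordinate chart on $\Sigma_\infty$, and then read off asymptotic flatness and vanishing of the energy directly from the known decay rates of $u$ and $f$. First, I would take $x' = (x^1, \ldots, x^{n-1})$ on $\rr^{n-1} \smallsetminus B'$ (for a sufficiently large ball $B'$) as an asymptotic chart on $\Sigma_\infty$ via the graph embedding $x' \mapsto (x', f(x'))$, which by Lemma \ref{asymptotic-decay} is $\mathcal{C}^{3,\alpha}$ outside a compact subset. Since $g_{ab} = u^{4/(n-2)} \delta_{ab}$ on the harmonic asymptotic region, the pullback metric in these coordinates is
\begin{equation*}
(g_\infty)_{ij}(x') = u(x', f(x'))^{4/(n-2)} \bigl(\delta_{ij} + \partial_i f(x')\, \partial_j f(x')\bigr), \quad 1 \leq i, j \leq n-1.
\end{equation*}

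Next, I would separately estimate the two factors. Since $f$ is bounded, $|x|^2 = |x'|^2 + f(x')^2 = |x'|^2(1 + O(|x'|^{-2}))$ along the graph, so $|x|^{2-n} = |x'|^{2-n} + O(|x'|^{-n})$. Combined with $u = 1 + a|x|^{2-n} + O^{2,\alpha}(|x|^{1-n})$ and a Taylor expansion of the $4/(n-2)$ power, this yields
\begin{equation*}
u(x', f(x'))^{4/(n-2)} = 1 + \tfrac{4a}{n-2}|x'|^{2-n} + O^{2,\alpha}(|x'|^{1-n}).
\end{equation*}
Differentiating $f(x') = c + O^{3,\alpha}(|x'|^{3-n})$ once gives $\partial_i f = O^{2,\alpha}(|x'|^{2-n})$, hence $\partial_i f\, \partial_j f = O^{2,\alpha}(|x'|^{4-2n})$; the hypothesis $n > 3$ forces $4-2n < 1-n$, so this cross term is absorbed into the error. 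Combining gives
\begin{equation*}
(g_\infty)_{ij} - \delta_{ij} = \tfrac{4a}{n-2}|x'|^{2-n}\delta_{ij} + O^{2,\alpha}(|x'|^{1-n}),
\end{equation*}
which lies in $\mathcal{C}^{2,\alpha}_{2-n}(\rr^{n-1} \smallsetminus B')$ and therefore in $W^{2,p}_{-q'}$ for any $p$ and any $q' \in ((n-3)/2,\, n-2)$; the interval is nonempty because $n > 3$. Two further derivatives give $R_{g_\infty} = O(|x'|^{-n})$, which falls comfortably into the required weighted H\"older class, so $(\Sigma_\infty, g_\infty)$ is asymptotically flat in dimension $n-1$.

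Finally, to see $E_\infty = 0$ I would insert these estimates into the $(n-1)$-dimensional ADM energy formula. Differentiating $(g_\infty)_{ij} - \delta_{ij}$ once produces terms of size $O(|x'|^{1-n})$, so the integrand $\sum_{i=1}^{n-1} \bigl((g_\infty)_{ij,i} - (g_\infty)_{ii,j}\bigr)\nu_0^j$ is $O(|x'|^{1-n})$; integration over the Euclidean sphere $\{|x'| = r\} \subset \rr^{n-1}$ of area $\omega_{n-2}\, r^{n-2}$ then yields a flux of order $r^{-1}$, which vanishes as $r \to \infty$. The one point requiring mild care is the passage from an expansion in $|x|$ to an expansion in $|x'|$ on the graph while preserving the weighted H\"older regularity; the key observation behind the vanishing of the energy is that the leading conformal correction $|x'|^{2-n}$ decays strictly faster than the critical rate $|x'|^{3-n}$ for ADM energy in dimension $n-1$, precisely because $n > 3$, and it is this faster-than-critical decay that forces $E_\infty = 0$.
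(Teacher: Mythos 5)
Your proposal is correct and follows exactly the argument the paper intends for this corollary (left implicit there, and mirrored in the $n=3$ estimate $(g_\infty)_{ij}-\delta_{ij}=O^2(|x'|^{-1})$ quoted in Section 4): use the graph chart $x'\mapsto(x',f(x'))$ and the harmonic asymptotics to get $(g_\infty)_{ij}-\delta_{ij}=O^{2,\alpha}(|x'|^{2-n})$, which is faster than the critical rate $|x'|^{3-n}$ in dimension $n-1$, so the flux integral is $O(r^{-1})$ and the energy vanishes. Your handling of the cross term $\partial_i f\,\partial_j f$ and of the scalar curvature decay matches what is needed, so there is nothing to add.
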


The following lemma is a simple consequence of Proposition \ref{symmetrized-stability} and the fact that $\Sigma_\infty$ is a limit of stable MOTS. 
\begin{lem}\label{compact-stability} 
Let $\Sigma_\infty$ be a complete MOTS whose existence is established by Lemma \ref{asymptotic-decay}.
For any $v\in W_{\frac{3-n}{2}}^{1,2}(\Sigma_\infty)$, we have
\begin{equation*}
\int_{\Sigma_\infty} \left(|\nabla v |^2  + Q_{\Sigma_\infty} v^2\right)\,d\area
\geq 0.
\end{equation*}
\end{lem}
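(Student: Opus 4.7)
The plan is to prove the inequality in two stages. First, establish it for test functions $v \in C^1_c(\Sigma_\infty)$ by transferring stability from the approximating MOTS $\Sigma_{\rho_j, h_j}$ to $\Sigma_\infty$. Second, extend to all $v \in W^{1,2}_{\frac{3-n}{2}}(\Sigma_\infty)$ by a cutoff-and-mollification density argument.

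For the first stage, fix $v \in C^1_c(\Sigma_\infty)$ with $\mathrm{supp}(v)$ contained in a compact set $K \subset M$. By Lemma \ref{asymptotic-decay}, $\Sigma_{\rho_j, h_j} \to \Sigma_\infty$ in $\mathcal{C}^{3,\alpha}$ on compact subsets of $M$. In particular, for $j$ sufficiently large, $\Sigma_{\rho_j, h_j}$ can be written as a normal graph of a function $\psi_j$ over a neighborhood of $K \cap \Sigma_\infty$ in $\Sigma_\infty$, with $\psi_j \to 0$ in $\mathcal{C}^{3,\alpha}$; this yields a diffeomorphism $\Psi_j$ between these neighborhoods. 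Since $\partial \Sigma_{\rho_j, h_j} = \Gamma_{\rho_j, h_j} \subset \partial C_{\rho_j}$ retreats to infinity, for $j$ large the function $v_j := v \circ \Psi_j^{-1}$ is a compactly supported $\mathcal{C}^1$ function on $\Sigma_{\rho_j, h_j}$ vanishing near $\partial \Sigma_{\rho_j, h_j}$. Each $\Sigma_{\rho_j, h_j}$ is a stable MOTS by Lemma \ref{MOTS-existence}, so Proposition \ref{symmetrized-stability} gives
\[
\int_{\Sigma_{\rho_j, h_j}} \bigl(|\nabla v_j|^2 + Q_{\Sigma_{\rho_j, h_j}} v_j^2\bigr) \, d\area \ge 0.
\]
The $\mathcal{C}^{3,\alpha}$-convergence of $\Sigma_{\rho_j, h_j}$ ensures uniform $\mathcal{C}^0$-convergence on $K$ of $v_j$, $\nabla v_j$, $R_{\Sigma_{\rho_j, h_j}}$, $B_{\Sigma_{\rho_j, h_j}}$, and the pull-backs of $\mu$, $J$, $k$; passing to the limit $j \to \infty$ then gives the inequality on $\Sigma_\infty$ for all compactly supported $v$.

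For the second stage, given $v \in W^{1,2}_{\frac{3-n}{2}}(\Sigma_\infty)$, introduce a logarithmic cutoff $\chi_R$ that equals $1$ on $\{|x'| \le R\}$, vanishes outside $\{|x'| \le 2R\}$, and satisfies $|\nabla \chi_R|_g = O(1/R)$ on the annular transition region, and set $v_R := \chi_R v$. Standard weighted estimates show $v_R \to v$ in $W^{1,2}_{\frac{3-n}{2}}$ as $R \to \infty$: the gradient error $\int |\nabla \chi_R|^2 v^2\, d\area$ is controlled by the Hardy-type bound built into this weighted norm. Intrinsic mollification on $\Sigma_\infty$ then produces $\mathcal{C}^1_c$ approximants to which the first stage applies. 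The limit passes because the harmonic asymptotics of $(g,k)$ together with the graph decay $f = c + O^{3,\alpha}(|x'|^{3-n})$ from Lemma \ref{asymptotic-decay} imply that each term in $Q_{\Sigma_\infty} = \tfrac{1}{2}R_{\Sigma_\infty} - \mu - J(\nu) - \tfrac{1}{2}|k_{\Sigma_\infty} + B_{\Sigma_\infty}|^2$ decays at least like $O(|x'|^{-n})$ on the asymptotic end, so that
\[
\left| \int_{\Sigma_\infty} Q_{\Sigma_\infty} v^2 \, d\area \right| \le C \int_{\Sigma_\infty} \frac{v^2}{|x|^2} \, d\area \le C\|v\|_{W^{1,2}_{(3-n)/2}}^2,
\]
where the middle integral is exactly the zeroth-order part of the $W^{1,2}_{(3-n)/2}$-norm on an $(n-1)$-dimensional asymptotically flat manifold.

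The main technical obstacle is the second step: verifying the pointwise $O(|x'|^{-n})$ decay of each contribution to $Q_{\Sigma_\infty}$ on the asymptotic end from the harmonic asymptotics and Lemma \ref{asymptotic-decay}, and ensuring that the cutoff errors in both the gradient and the potential terms vanish as $R \to \infty$. Both are standard bookkeeping for weighted spaces on asymptotically flat manifolds, but require tracking the interplay between the decay rates of $g$, $k$, and the asymptotic graph function $f$ carefully enough to justify the limit.
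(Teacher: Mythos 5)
Your proposal is correct and follows essentially the route the paper indicates: it transfers Proposition \ref{symmetrized-stability} from the stable approximating MOTS $\Sigma_{\rho_j,h_j}$ to compactly supported test functions on $\Sigma_\infty$ via the local $\mathcal{C}^{3,\alpha}$ convergence of Lemma \ref{asymptotic-decay}, and then extends to all of $W^{1,2}_{\frac{3-n}{2}}(\Sigma_\infty)$ by density, using the $O(|x'|^{-n})$ decay of $Q_{\Sigma_\infty}$ to see that the quadratic form is continuous in the weighted norm---exactly the ``compactly supported $Z$'' simplification of the proof of Lemma \ref{stability-infty} that the paper invokes. (The only quibble is cosmetic: the cutoff you describe, with $|\nabla\chi_R|=O(1/R)$, is the standard linear one rather than a logarithmic one, and it suffices here precisely because $\int v^2/|x'|^2$ is part of the $W^{1,2}_{\frac{3-n}{2}}$ norm.)
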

We omit the proof because it is strictly simpler than that of Lemma \ref{stability-infty} in Section \ref{section-complete}.  Specifically,
the proof of Lemma \ref{stability-infty} becomes a proof of Lemma \ref{compact-stability} by simply replacing $Z$  by an arbitrary compactly supported vector field on $M$ and replacing the use of Lemma \ref{stability} by Proposition \ref{symmetrized-stability}.

%%%%%%%%%%%%%%%%%%%%%%%%%%%%%%%%%%%%%%%%%%%%%%%%%
%%%%%%%%%%%%%%%%%%%%%%%%%%%%%%%%%%%%%%%%%%%%%%%%%
%%%%%%%%%%%%%%%%%%%%%%%%%%%%%%%%%%%%%%%%%%%%%%%%%

\section{The case $n=3$}\label{dimension-three}

We consider the base case $n=3$ of our inductive proof.   After the preparation of the previous section, the rest of the proof of the $n=3$ case is essentially the same as for the time-symmetric case in \cite{Schoen-Yau:1979-pmt1}, where minimal surfaces are replaced by MOTS.  

By Lemma \ref{asymptotic-decay}, we can extract a subsequential limit $\Sigma_\infty$ of $\Sigma_{\rho, 0}$ as $\rho\to\infty$.  Let $\Sigma'_\infty$ be the noncompact component of $\Sigma_\infty$.  Lemma \ref{compact-stability} implies that
\begin{align} \label{eq:stability-complete_slice}
	\int_{\Sigma'_{\infty}} \Big( | \nabla v |^2 +Q_{\Sigma'_{\infty}} v^2 \Big) \, d\mathcal{H}^2 \ge 0\end{align}
for every $v \in  W_{\frac{3-n}{2}}^{1,2}(\Sigma_\infty)$.  Noting that $\Sigma_\infty$ has quadratic area growth, we can use the logarithmic cut-off trick exactly as in \cite[page 54]{Schoen-Yau:1979-pmt1} to approximate the constant function $1$ on $\Sigma'_\infty$ by compactly supported functions in order to conclude that
\begin{equation}\nonumber
\int_{\Sigma'_\infty}Q_{\Sigma'_{\infty}}\, d \mathcal{H}^2 \ge 0. 
\end{equation}
The strict dominant energy condition then implies that 
\begin{equation}  \label{eqn:positive_Gaussian} \int_{\Sigma'_\infty} K_{\Sigma'_\infty}\, d \mathcal{H}^2 > 0,\end{equation}
where  $K_{\Sigma'_\infty}$ denotes the Gauss curvature.
On the other hand, just as in \cite{Schoen:1989}, the estimate $(g_\infty)_{ij}(x')-\delta_{ij}=O^2(|x'|^{-1})$ implies that the geodesic curvature of $\partial (\Sigma'_\infty\cap C_r)$ is $\kappa=\frac{1}{r}+O(r^{-2})$ while the length of $\partial (\Sigma'_\infty\cap C_r)$ is $2\pi r + O(1)$.  The Gauss-Bonnet Theorem tells us that
\[\int_{\Sigma'_\infty\cap C_r} K_{\Sigma'_{\infty}}\, d \mathcal{H}^2 = 2\pi\chi(\Sigma'_\infty\cap C_r) - \int_{\partial (\Sigma'_\infty\cap C_r) } \kappa \, d \mathcal{H}^1.\]
Combined with (\ref{eqn:positive_Gaussian}) and the asymptotics of  $\partial (\Sigma'_\infty\cap C_r)$ described above, for large~$r$, we obtain
\[0< 2\pi\chi(\Sigma'_\infty\cap C_r) - 2\pi.\]
Since $\Sigma'_\infty\smallsetminus C_r$ is a graph for large $r$, we  
know that $\Sigma'_\infty\cap C_r$ is connected, yielding a  
contradiction.
 \qed

%%%%%%%%%%%%%%%%%%%%%%%%%%%%%%%%%%%%%%%%%%%%%%%%%
%%%%%%%%%%%%%%%%%%%%%%%%%%%%%%%%%%%%%%%%%%%%%%%%%
%%%%%%%%%%%%%%%%%%%%%%%%%%%%%%%%%%%%%%%%%%%%%%%%%

\section{The case $3<n<8$}\label{higher-dimensions}

Let $3<n<8$.  We suppose that Theorem \ref{thm:PMT} holds in $n-1$ dimensions and that it fails for an $n$-dimensional initial data set $(M, g, k)$ as in Section \ref{section-construction}. For the reasons described in the introduction, the argument here is substantially different from the proof in the time-symmetric case.  

\subsection{The functional $\F$}\label{section-functional}

In this section we introduce a functional $\F$ that will be essential for our proof.
In order to motivate the definition of $\F$, consider the time-symmetric case when $k=0$ so that the MOTS $\{ \Sigma_{\rho, h}\}_{|h| \leq \Lambda}$ constructed in Section \ref{sec:existenceMOTS} are minimal hypersurfaces. An important step in the proof of the Riemannian positive mass theorem when $3 < n < 8$ \cite{Schoen:1989} is to pick $h_\rho$ such that  $\Sigma_{\rho, h_{\rho}}$ has least area in this family. 
Suppose for a moment that the family $\{\Sigma_{\rho,h}\}_{|h| \leq |\Lambda|}$ is actually a $\C^{3,\alpha}$ foliation of minimal hypersurfaces with a first-order deformation vector field $X = \varphi \nu + \hat X$ that is equal to  $\partial_n$ at $\partial C_\rho$.  Then 
\begin{align*}
\frac{d}{dh}\area(\Sigma_{\rho ,h})
&=  \int_{\Sigma_{\rho,h}}  (\Div_{\Sigma_{\rho,h}} X)\, d\area\\
&= \int_{\Sigma_{\rho,h}} \Div_{\Sigma_{\rho,h}} (\varphi \nu + \hat{X})\, d\area\\
&= \int_{\Sigma_{\rho,h}}   (H\varphi +  \Div_{\Sigma_{\rho,h}}\hat{X})\,d\area\\
&= \int_{\Sigma_{\rho,h}}   (\Div_{\Sigma_{\rho,h}}\hat{X})\,d\area\\
&=\int_{\partial\Sigma_{\rho,h}} \langle \hat{X},\eta\rangle\,d\barea\\
&=\int_{\partial\Sigma_{\rho,h}} \langle\partial_n,\eta\rangle\,d\barea.
\end{align*}
If $h_\rho \in (- \Lambda, \Lambda)$ minimizes areas as described above, then the \emph{first derivative} in $h$ of the integral above is nonnegative. This, along with the stability of $\Sigma_{\rho, h_\rho}$ among deformations that keep the boundary fixed, is all that is needed to finish the proof in the time-symmetric case \cite{Schoen:1989}. 

We now return to the general case. Instead of using the area functional, which is not adapted for application to MOTS, we will build our proof around the functional described below.
\begin{defin} Let $\Sigma$ be a compact hypersurface in $M$ whose boundary lies on some coordinate cylinder $\partial C_r$.  We let 
\begin{equation}\label{define-F}
\F(\Sigma)=\int_{\partial\Sigma} \langle \partial_n,\eta \rangle\,d\barea
\end{equation}
where $\eta$ is the outward unit normal of $\partial\Sigma$ in $\Sigma$.  Note that, using harmonic asymptotics,  one can easily see that 
\begin{equation}\label{alt-F}
\F(\Sigma)= \int_{\partial \Sigma} u^{\frac{2(n-1)}{n-2}} \eta_0^n \,d\barea_0
\end{equation}
where $\eta_0^n$ is the $n$-th component of the unit normal 
$\eta_0$ computed using the Euclidean metric, and $\mathcal{H}_0^{n-1}$ denotes Euclidean Hausdorff measure.
\end{defin}

The barrier planes $\{ x_n = \pm \Lambda\}$ give us a sign on $\F(\Sigma_{\rho,\pm\Lambda})$:

\begin{lem}\label{F-sign} For any $\rho$ sufficiently large, 
\[ \F(\Sigma_{\rho,-\Lambda})< 0< \F(\Sigma_{\rho,\Lambda}).\]
\end{lem}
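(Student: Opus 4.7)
The plan is to read off the sign of $\F(\Sigma_{\rho, \pm \Lambda})$ directly from the fact that each $\Sigma_{\rho, \pm \Lambda}$ is trapped against the corresponding horizontal barrier plane $\{x^n = \pm \Lambda\}$ supplied by Lemma \ref{barriers}, combined with the fact that the boundary $\Gamma_{\rho, \pm \Lambda}$ actually lies on that barrier plane. By Lemma \ref{MOTS-existence}, $\Sigma_{\rho, \Lambda}$ lies in $C_{\rho, \Lambda} \subset \{x^n \le \Lambda\}$ and has boundary $\Gamma_{\rho, \Lambda} \subset \{x^n = \Lambda\}$, so $x^n$ restricted to $\Sigma_{\rho, \Lambda}$ attains its maximum value $\Lambda$ precisely along $\partial \Sigma_{\rho, \Lambda}$. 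Since $x^n$ is moreover constant along $\partial \Sigma_{\rho, \Lambda}$, its intrinsic gradient $\nabla^{\Sigma_{\rho, \Lambda}} x^n$ at each boundary point $p$ is perpendicular to $T_p \partial \Sigma_{\rho, \Lambda}$ and points in the direction of increase, hence is a nonnegative multiple of $\eta$. This gives $\langle \partial_n, \eta \rangle(p) = \eta(x^n)(p) \geq 0$ on $\partial \Sigma_{\rho, \Lambda}$.

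Next I would upgrade this to strict positivity using Proposition \ref{max-prin}. If $\langle \partial_n, \eta \rangle(p)$ vanished at some $p \in \partial \Sigma_{\rho, \Lambda}$, then $\eta$ would be horizontal at $p$; since $T_p \partial \Sigma_{\rho, \Lambda} \subset T_p \{x^n = \Lambda\}$ as well, the $(n-1)$-dimensional tangent plane $T_p \Sigma_{\rho, \Lambda}$ would coincide with $T_p \{x^n = \Lambda\}$, meaning $\Sigma_{\rho, \Lambda}$ and the barrier plane are tangent at the common boundary point $p$. Writing both as graphs over a common patch of this tangent plane and taking $F(x, \nu) := -\tr_{\nu^\perp} k(x)$, so that the MOTS equation and the strict barrier inequality $\theta^+_{\{x^n = \Lambda\}} > 0$ translate into $H_{\Sigma_{\rho, \Lambda}} \le F$ and $H_{\{x^n = \Lambda\}} \ge F$, Proposition \ref{max-prin} would force $\Sigma_{\rho, \Lambda}$ to coincide with $\{x^n = \Lambda\}$, contradicting the strict inequality of their $\theta^+$ values. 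Hence $\langle \partial_n, \eta \rangle > 0$ on the compact set $\partial \Sigma_{\rho, \Lambda}$, and integrating yields $\F(\Sigma_{\rho, \Lambda}) > 0$.

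The sign of $\F(\Sigma_{\rho, -\Lambda})$ is obtained by an exactly parallel argument: $\Sigma_{\rho, -\Lambda}$ lies in $\{x^n \ge -\Lambda\}$ with boundary on $\{x^n = -\Lambda\}$, so $x^n|_{\Sigma_{\rho, -\Lambda}}$ attains its minimum along the boundary, forcing $\langle \partial_n, \eta \rangle \leq 0$ pointwise; strict negativity follows by applying Proposition \ref{max-prin} in the same way, using $\theta^+_{\{x^n = -\Lambda\}} < 0 = \theta^+_{\Sigma_{\rho, -\Lambda}}$ with $\{x^n = -\Lambda\}$ now playing the role of the lower hypersurface.

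The only real obstacle is checking that Proposition \ref{max-prin} applies in its boundary-tangency form to the pair $(\Sigma_{\rho, \Lambda}, \{x^n = \Lambda\})$, since these share the circle $\Gamma_{\rho, \Lambda}$ as a common boundary rather than meeting at a fully interior point. This amounts to verifying that near a hypothetical tangency point $p$ one can express both surfaces as $\mathcal{C}^2$ graphs over a common neighborhood in $T_p \{x^n = \Lambda\}$ and invoke the standard Hopf boundary point argument for the quasilinear prescribed-mean-curvature equation; given the transversality of $\Sigma_{\rho, \Lambda}$ to $\partial C_\rho$ established in Lemma \ref{MOTS-existence}, this is a routine setup, and I do not expect further technical difficulty.
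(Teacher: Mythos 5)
Your argument is correct and is essentially the paper's own proof: the ordering from Lemma \ref{MOTS-existence} (the MOTS lies below, resp.\ above, the barrier plane with common boundary $\Gamma_{\rho,\pm\Lambda}$) gives the weak sign of $\langle\partial_n,\eta\rangle$ along the boundary, and Proposition \ref{max-prin} rules out boundary tangency with the strictly untrapped plane, upgrading it to a strict pointwise sign whose integration gives the claim. The paper phrases the last step via the Euclidean reformulation (\ref{alt-F}) of $\F$ (equivalently, $\eta_0^n>0$), which is the same as your computation up to the positive conformal factor $u$, so the two proofs coincide.
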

\begin{proof}
From Lemma \ref{MOTS-existence} we know that $\Sigma_{\rho,\Lambda}$ lies below the plane $\{x^n=\Lambda\}$ in $C_\rho$. The strong maximum principle Lemma \ref{max-prin} implies that they cannot meet tangentially at their common boundary $\Gamma_{\rho, \Lambda}$. Hence the Euclidean outward unit normal of $\partial\Sigma_{\rho,\Lambda}$ in $\Sigma_{\rho,\Lambda}$ satisfies $\eta_0^n>0$.  The inequality $\F(\Sigma_{\rho, \Lambda}) >0$ then follows from (\ref{alt-F}). The proof that $\mathcal{F}(\Sigma_{\rho, - \Lambda}) < 0$ is analogous. 
\end{proof}

Recall the definition of jump heights from Section \ref{sec:convergenceMOTS}.
\begin{lem} \label{lem:downwardjump} 
The function $h \mapsto \mathcal{F}(\Sigma_{\rho, h})$ is continuous at every $h_0 \in [-\Lambda, \Lambda]$ that is \emph{not} a jump height. If $h_0 \in
[-\Lambda, \Lambda]$ \emph{is} a jump height, then 
\[\lim_{h\nearrow h_0}\F(\Sigma_{\rho, h}) \ge\F(\Sigma_{\rho, h_0}) \ge  \lim_{h \searrow h_0}\F(\Sigma_{\rho, h}),\]
where both limits exist, and at least one of the inequalities above is strict.  In other words, there must be a downward jump discontinuity at every jump height.
\end{lem}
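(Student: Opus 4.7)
My plan is to build the proof around the representation $\F(\Sigma)=\int_{\partial\Sigma} u^{\frac{2(n-1)}{n-2}}\eta_0^n\, d\barea_0$ from \eqref{alt-F}, and to handle continuity and the jump inequalities separately.

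First, for continuity at a non-jump height $h_0$: by the discussion after Lemma \ref{MOTS-convergence}, $\Sigma_{\rho,h}\to \Sigma_{\rho,h_0}$ in $\mathcal{C}^{3,\alpha}$ as $h\to h_0$. Since $\F$ depends only on the position and first-order data of $\Sigma_{\rho,h}$ along $\partial \Sigma_{\rho,h}=\Gamma_{\rho,h}$, which depend continuously on $h$ under $\mathcal{C}^{3,\alpha}$ convergence, continuity of $\F$ follows immediately. The same observation, applied to the one-sided limits provided by Lemma \ref{MOTS-convergence}, gives existence of $\lim_{h\nearrow h_0}\F(\Sigma_{\rho,h})=\F(\underline{\Sigma}_{\rho,h_0})$ and $\lim_{h\searrow h_0}\F(\Sigma_{\rho,h})=\F(\overline{\Sigma}_{\rho,h_0})$ at every $h_0$.

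Next, I will establish the chain of inequalities $\F(\underline{\Sigma}_{\rho,h_0})\ge \F(\Sigma_{\rho,h_0})\ge \F(\overline{\Sigma}_{\rho,h_0})$ by a pointwise boundary comparison. Given any two MOTS $\Sigma_1\le \Sigma_2$ with the same boundary $\Gamma_{\rho,h_0}\subset \partial C_\rho$, I pick a boundary point and write both surfaces locally as graphs $x^n=f_i$ over the vertical tangent plane to $\partial C_\rho$, so that the interior of $C_\rho$ corresponds to one side. The transversality and $\mathcal{C}^{3,\alpha}$ boundary regularity from Lemma \ref{MOTS-existence} make this valid. Since both $f_i$ vanish on $\partial C_\rho\cap\{x^n=h_0\}$ and $f_1\le f_2$ in $C_\rho$, the one-sided inward derivatives satisfy $\partial_{\text{in}} f_1\ge \partial_{\text{in}} f_2$. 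Because $\eta_0^n$ is an increasing function of this normal slope, we obtain $\eta_0^n(\Sigma_1)\ge \eta_0^n(\Sigma_2)$ pointwise on $\Gamma_{\rho,h_0}$. Integrating the positive weight $u^{\frac{2(n-1)}{n-2}}$ against this inequality via \eqref{alt-F} yields the desired monotonicity of $\F$, since $\underline{\Sigma}_{\rho,h_0}\le \Sigma_{\rho,h_0}\le \overline{\Sigma}_{\rho,h_0}$ as relative boundaries.

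Finally, the strictness is the step I expect to be the main obstacle, and it relies on the strong maximum principle Proposition \ref{max-prin}. If both inequalities were equalities, then the pointwise estimate $\eta_0^n(\underline{\Sigma}_{\rho,h_0})\ge \eta_0^n(\overline{\Sigma}_{\rho,h_0})$, combined with the strict positivity of $u^{\frac{2(n-1)}{n-2}}$, would force $\eta_0^n(\underline{\Sigma}_{\rho,h_0})=\eta_0^n(\overline{\Sigma}_{\rho,h_0})$ everywhere on $\Gamma_{\rho,h_0}$. Through the graph representation above, this equality of normal slopes means that $\underline{\Sigma}_{\rho,h_0}$ and $\overline{\Sigma}_{\rho,h_0}$ are tangent at every boundary point. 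Both are MOTS, hence solutions of the same prescribed-expansion equation, with $\underline{\Sigma}_{\rho,h_0}\le \overline{\Sigma}_{\rho,h_0}$; the boundary version of the strong maximum principle in Proposition \ref{max-prin} then forces $\underline{\Sigma}_{\rho,h_0}=\overline{\Sigma}_{\rho,h_0}$, contradicting the definition of a jump height. Therefore at least one of the inequalities must be strict, completing the proof.
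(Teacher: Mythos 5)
Your proposal is correct and follows essentially the same route as the paper: one-sided limits via Lemma \ref{MOTS-convergence}, the ordered-conormal inequality $(\underline{\eta})_0^n\ge \eta_0^n\ge(\overline{\eta})_0^n$ fed into \eqref{alt-F}, and the strong maximum principle (Proposition \ref{max-prin}) to rule out boundary tangency of distinct ordered MOTS, your strictness step being merely a contrapositive phrasing applied to the pair $(\underline{\Sigma}_{\rho,h_0},\overline{\Sigma}_{\rho,h_0})$. One small caveat: if ``inward'' means into $C_\rho$, then $f_1\le f_2$ with equal boundary values gives $\partial_{\mathrm{in}}f_1\le\partial_{\mathrm{in}}f_2$ (your displayed inequality is reversed), though the conclusion you extract, namely $\eta_0^n(\Sigma_1)\ge\eta_0^n(\Sigma_2)$ for the lower surface $\Sigma_1$, is the correct one and is exactly what the paper asserts.
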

 \begin{proof}
Let $h_0\in[-\Lambda,\Lambda]$.  Then by Lemma \ref{MOTS-convergence}, $\lim_{h\nearrow h_0}\F(\Sigma_{\rho, h}) = \F(\underline{\Sigma}_{\rho, h_0})$ and  $\lim_{h \searrow h_0}\F(\Sigma_{\rho, h})= \F(\overline{\Sigma}_{\rho, h_0})$.  By definition, if $h_0$ is not a jump height, then both of these limits must equal $\F({\Sigma}_{\rho, h_0})$

Let $h_0$ be a jump height.  Since the family  $\{\Sigma_{\rho,h}\}_{|h|\leq\Lambda}$ is ordered, it is clear that $\underline{\Sigma}_{\rho, h_0} $ lies beneath $\Sigma_{\rho,h_0}$, which lies beneath $\overline{\Sigma}_{\rho, h_0}$.  Since they all share the common boundary $\Gamma_{\rho, h_0}$,  we have $(\underline {\eta})_0^n\ge \eta_0^n \ge (\overline{\eta})_0^n$, where $\underline{\eta}, \eta, \overline{\eta}$ are the outward normals of $\Gamma_{\rho, h_0}$ in $\underline{\Sigma}_{\rho, h_0}$, ${\Sigma}_{\rho, h_0}$, and $\overline{\Sigma}_{\rho, h_0}$, respectively.  Since $h_0$ is jump height, $\underline{\Sigma}_{\rho, h_0} \neq \overline{\Sigma}_{\rho, h_0}$, so the strong maximum principle (Lemma \ref{max-prin}) implies that at least one of the above inequalities is strict.  By the definition of $\F$ in (\ref{alt-F}),
\[ \F(\underline{\Sigma}_{\rho, h_0}) \ge \F(\Sigma_{\rho, h_0})\ge \F(\overline{\Sigma}_{\rho, h_0}),\]
where at least one of the inequalities is strict.
\end{proof}

We now compute the first variation of $\F$. In view of Lemmas \ref{F-sign} and \ref{lem:downwardjump} we may hope to find $h_\rho \in (-\Lambda, \Lambda)$ such that the derivative of $h \to \F(\Sigma_{\rho, h})$ at $h_\rho$ (defined in a suitably weak sense) is nonnegative.

\begin{prop}\label{prop-variation-F} 
Let $\Sigma$ be a compact hypersurface with unit normal $\nu$ in $M$ whose boundary lies on some $\partial C_r$.  Let $X$ be a $\C^1$ vector field along $\Sigma$ that is tangent to $\partial C_r$ along $\partial\Sigma$.  Let $Z$ be a vector field of $M$ such that $Z=\partial_n$ along $\partial C_r$. Then
\begin{equation}\label{variation-F}
D\F|_{\Sigma}(X)=\int_{\partial\Sigma} \langle \phi\nabla \varphi + G(X),\eta\rangle \,d\barea
\end{equation}
where
\begin{equation}\label{def-G}
G(X)= D_X Z - D_{\hat{Z}} \hat{X} +(\varphi H + \Div_{\Sigma} \hat{X}) \hat{Z}-\phi S (\hat{X})-\varphi S(\hat{Z})
\end{equation}
and where $X = \varphi \nu + \hat X$ and $Z = \phi \nu + \hat Z$ are the decompositions of $X$ and $Z$ into normal and tangential parts along $\Sigma$.
\end{prop}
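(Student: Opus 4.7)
The plan is to deform $\Sigma$ along the flow $\Phi_t$ of a $\mathcal{C}^1$ extension of $X$ to a neighborhood of $\Sigma$ in $M$, set $\Sigma_t := \Phi_t(\Sigma)$, and differentiate
\[
\F(\Sigma_t) = \int_{\partial \Sigma_t} \langle Z, \eta_t\rangle\, d\barea_t
\]
at $t = 0$.  Because $X$ is tangent to $\partial C_r$ along $\partial \Sigma$, the boundary $\partial \Sigma_t$ remains in $\partial C_r$ for small $t$, so $Z = \partial_n$ along each $\partial \Sigma_t$ and the expression above indeed represents $\F(\Sigma_t)$. Pulling back to $\partial \Sigma$ via $y \mapsto \Phi_t(y)$ with $(n-2)$-Jacobian $J^b_t$ and differentiating gives the master identity
\[
D\F|_\Sigma(X) = \int_{\partial \Sigma}\Bigl[\langle D_X Z, \eta\rangle + \langle Z, \dot\eta\rangle + \langle Z, \eta\rangle\,\Div_{\partial \Sigma} X\Bigr] d\barea,
\]
where $\dot\eta$ denotes the covariant derivative along $t \mapsto \Phi_t(y)$ of the outward unit conormal $\eta_t$.

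The main technical step is the identification of $\dot\eta$. In the orthonormal frame $\{\nu, \tau_1, \ldots, \tau_{n-2}, \eta\}$ along $\partial \Sigma$, the unit constraint forces $\langle \dot\eta, \eta\rangle = 0$. Differentiating $\langle \eta_t, \nu_t\rangle = 0$ and invoking the standard formula $\dot\nu = -\nabla \varphi + S(\hat X)$ (itself obtained by differentiating $\langle \nu_t, (\Phi_t)_* e_i\rangle = 0$ on $\Sigma$) yields
\[
\langle \dot\eta, \nu\rangle = \eta(\varphi) - \langle S(\hat X), \eta\rangle.
\]
Differentiating $\langle \eta_t, (\Phi_t)_* \tau_\alpha\rangle = 0$ and using $\nabla_t (\Phi_t)_*\tau_\alpha = D_{\tau_\alpha}X$ gives
\[
\langle \dot\eta, \tau_\alpha\rangle = -\varphi\,\langle S(\eta), \tau_\alpha\rangle - \langle \eta, \nabla^\Sigma_{\tau_\alpha}\hat X\rangle,
\]
where only the $\Sigma$-tangential part of $D_{\tau_\alpha}\hat X$ contributes because $\eta\perp\nu$. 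In the same frame one computes
\[
\Div_{\partial \Sigma} X = \varphi\,(H - B(\eta,\eta)) + \Div_\Sigma \hat X - \langle \nabla^\Sigma_\eta \hat X, \eta\rangle.
\]

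The remainder is algebraic bookkeeping. Substituting these expressions, decomposing $\hat Z = \hat Z^\partial + \langle \hat Z, \eta\rangle \eta$ with $\hat Z^\partial$ tangent to $\partial \Sigma$, and using the symmetry of $S$ to rewrite $\varphi\langle S(\eta), \hat Z^\partial\rangle = \varphi\langle S(\hat Z), \eta\rangle - \varphi\langle \hat Z, \eta\rangle B(\eta, \eta)$, I find that the two $\varphi\langle \hat Z, \eta\rangle B(\eta, \eta)$ terms cancel, as do the two $\langle \hat Z, \eta\rangle\langle \nabla^\Sigma_\eta \hat X, \eta\rangle$ terms. What survives is
\[
\langle Z, \dot\eta\rangle + \langle Z, \eta\rangle \Div_{\partial\Sigma} X = \phi\,\eta(\varphi) - \phi \langle S(\hat X), \eta\rangle - \varphi \langle S(\hat Z), \eta\rangle - \langle D_{\hat Z}\hat X, \eta\rangle + \langle \hat Z, \eta\rangle(\varphi H + \Div_\Sigma \hat X),
\]
where $\langle \nabla^\Sigma_{\hat Z}\hat X, \eta\rangle = \langle D_{\hat Z}\hat X, \eta\rangle$ since $\eta$ is tangent to $\Sigma$. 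Adding $\langle D_X Z, \eta\rangle$ and recognizing $\phi\,\eta(\varphi) = \langle \phi\nabla\varphi, \eta\rangle$ assembles precisely the integrand $\langle \phi\nabla\varphi + G(X), \eta\rangle$ of \eqref{variation-F}. The only real obstacle is keeping the $\eta$-directional contributions consistently separated from the $\partial\Sigma$-tangential ones throughout the decomposition; once this is done, the cancellations are forced and no identity beyond the symmetry of $B$ is used.
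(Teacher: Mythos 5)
Your proposal is correct and follows essentially the same route as the paper's proof: differentiate the boundary integral $\int_{\partial\Sigma}\langle Z,\eta\rangle\,d\barea$ by varying $Z$, $\eta$, and the measure, then decompose along $\nu$, $\eta$, and $T(\partial\Sigma)$. The only difference is cosmetic bookkeeping—the paper groups the leftover terms as $-\langle D_{\hat Z}X,\eta\rangle$ and never introduces $B(\eta,\eta)$, while you expand $\dot\eta$ and $\Div_{\partial\Sigma}X$ fully and let those terms cancel—so the two computations are equivalent.
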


\begin{proof} 
Note that $\F(\Sigma)=\int_{\partial\Sigma} \langle Z,\eta \rangle\,d\barea$.  Let $e_1,\ldots,e_{n-2}$ be a local orthonormal frame for the tangent space of $\partial\Sigma$.  We can differentiate $Z$, the outward unit normal $\eta$ of $\partial \Sigma$ in $\Sigma$, and the induced measure on $\partial\Sigma$ to obtain
\begin{multline}\label{three-terms}
D\F|_{\Sigma}(X)=\\
\int_{\partial\Sigma} \left[ \langle D_X Z, \eta\rangle + \left\langle Z, \langle D_\eta X,\nu\rangle\nu-\sum_{i=1}^{n-2}\langle D_{e_i} X, \eta\rangle e_i \right\rangle +\langle Z,\eta\rangle \Div_{\partial\Sigma} X    \right]\,d\barea.
\end{multline}
(The derivative of $\eta$ is computed by differentiating the orthogonality relations.)  
Along $\partial\Sigma$, we introduce the decomposition $\hat{Z}=\psi\eta+Z^{\partial}$ into components that are normal and tangential to $\partial\Sigma$.  The second term in the integrand of (\ref{three-terms}) is
\begin{align}
 \left\langle Z, \langle D_\eta X,\nu\rangle\nu-\sum_{i=1}^{n-2}\langle D_{e_i} X, \eta\rangle e_i \right\rangle 
 &=
 \langle Z,\nu \rangle\langle D_\eta X,\nu\rangle -\sum_{i=1}^{n-2}\langle Z, e_i\rangle\langle D_{e_i} X, \eta\rangle\notag\\ 
 &=  \phi  \langle D_\eta (\varphi\nu+\hat{X}),\nu\rangle
-\langle D_{Z^\partial} X,\eta\rangle\notag \\
 &=  \phi  (\nabla_\eta \varphi+\langle D_\eta \hat{X},\nu\rangle)
-\langle D_{Z^\partial} X,\eta\rangle\notag \\
 &=  \langle \phi  \nabla \varphi,\eta\rangle -\langle \phi  S(\hat{X}),\eta\rangle
-\langle D_{Z^\partial} X,\eta\rangle.  \label{second-term}
\end{align}
The third term in the integrand of (\ref{three-terms}) is
\begin{align}
\langle Z,\eta\rangle \Div_{\partial\Sigma} X 
&=\langle Z,\eta\rangle  (\Div_{\Sigma} X-\langle D_\eta X, \eta\rangle)\notag \\
&=\langle \hat{Z},\eta\rangle (\varphi H+ \Div_{\Sigma} \hat{X})  -\psi \langle D_\eta X, \eta\rangle\notag\\
&= \langle (\varphi H+ \Div_{\Sigma} \hat{X})\hat{Z},\eta\rangle-\langle D_{\psi\eta} X,\eta\rangle.
 \label{third-term}
\end{align}
Notice that the first term in the integrand of (\ref{three-terms}), the first two terms of (\ref{second-term}) and the first  term of (\ref{third-term}) combine to give
\begin{equation}\label{first-chunk}
\langle \phi  \nabla \varphi + D_X Z + (\varphi H + \Div_{\Sigma}\hat{X}) \hat{Z} -\phi S(\hat{X}),\eta\rangle.
\end{equation}
The remaining two terms, which are the last term of (\ref{second-term}) and the last term of (\ref{third-term}), combine to give
\begin{align}
-\langle D_{Z^\partial} X,\eta\rangle-\langle D_{\psi\eta} X,\eta\rangle
&= -\langle D_{\hat{Z}} X,\eta\rangle \notag\\
&= -\langle D_{\hat{Z}} (\varphi\nu+\hat{X}),\eta\rangle \notag\\
&= -\langle \varphi D_{\hat{Z}} \nu+ D_{\hat{Z}} \hat{X},\eta\rangle \notag\\
&= -\langle \varphi S(\hat{Z})+ D_{\hat{Z}} \hat{X},\eta\rangle.\label{second-chunk}
\end{align}
The result follows from combining (\ref{first-chunk}) and (\ref{second-chunk}).
\end{proof}

%%%%%%%%%%%%%%%%%%%%%%%%%%%%%%%%%%%%%%%%%%%%%%%%%
%%%%%%%%%%%%%%%%%%%%%%%%%%%%%%%%%%%%%%%%%%%%%%%%%
%%%%%%%%%%%%%%%%%%%%%%%%%%%%%%%%%%%%%%%%%%%%%%%%%

\subsection{Height picking and stability}\label{section-height-picking} 

The following lemma, whose proof we defer to Section \ref{not-smooth}, will stand in for the geometric inequality (\ref{area-inequality}) that was available in the time-symmetric case. 
\begin{lem}\label{choose-h-rho}
Let $\Sigma'_{\rho, h}$ denote the component of $\Sigma_{\rho, h}$ that contains the boundary $\Gamma_{\rho,h}$.
For every large $\rho$ there exists $h_\rho\in (-\Lambda,\Lambda)$ and a $\C^2$ vector field $X$ along $\Sigma'_{\rho,h_\rho}$ that is equal to $\partial_n$ along $\partial\Sigma'_{\rho,h_\rho}=\Gamma_{\rho,h_\rho}$   such that $\varphi=\langle X, \nu \rangle > 0$,
\begin{equation}\label{MOTS-variation}
 D\theta^+|_{\Sigma'_{\rho,h_\rho}} (X) = 0,
 \end{equation} 
and
\begin{equation}\label{h-rho-inequality}
D\F|_{\Sigma'_{\rho,h_\rho}} (X) \ge 0.
\end{equation}
\end{lem}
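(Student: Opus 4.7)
My plan is to find $h_\rho$ by studying the scalar function $f(h) := \F(\Sigma_{\rho,h})$ on $[-\Lambda,\Lambda]$. By Lemma \ref{F-sign} one has $f(-\Lambda) < 0 < f(\Lambda)$, and by Lemma \ref{lem:downwardjump} $f$ is continuous except at a countable set $J$ of heights where it jumps downward. In particular, $f$ is of bounded variation, and its Lebesgue decomposition $f = f_c + f_j$ into continuous and pure-jump parts satisfies $f_c(\Lambda) - f_c(-\Lambda) \ge f(\Lambda) - f(-\Lambda) > 0$, because $f_j$ is non-increasing. A standard differentiation-of-BV-functions argument then yields a non-jump $h_\rho \in (-\Lambda,\Lambda)$ at which $f$ is differentiable and $f'(h_\rho) \ge 0$.

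At such a non-jump $h_\rho$, I will apply the implicit function theorem to the MOTS operator $\theta^+$, regarded as a map on $\mathcal{C}^{2,\alpha}$ normal graphs $w$ over $\Sigma_{\rho,h_\rho}$ with Dirichlet boundary data prescribed by $\Gamma_{\rho,h}$. The normal linearization is the MOTS-stability operator $L$ from (\ref{define-L}). Provided its principal Dirichlet eigenvalue on $\Sigma'_{\rho,h_\rho}$ is strictly positive, $L$ is invertible on zero-Dirichlet data, and the IFT produces a $\mathcal{C}^{2,\alpha}$ family of MOTS $\Sigma_{\rho,h}$ depending differentiably on $h$ near $h_\rho$. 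Differentiating this family at $h_\rho$ delivers a $\mathcal{C}^2$ vector field $X$ along $\Sigma'_{\rho,h_\rho}$ with $X|_{\partial\Sigma'_{\rho,h_\rho}} = \partial_n$, and the identity $\theta^+_{\Sigma_{\rho,h}} \equiv 0$ pulls back along this family to $D\theta^+|_{\Sigma'_{\rho,h_\rho}}(X) = L\varphi = 0$, where $\varphi := \langle X,\nu\rangle$. This establishes (\ref{MOTS-variation}) together with the prescribed boundary behavior of $X$.

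For the positivity of $\varphi$: on $\partial\Sigma'_{\rho,h_\rho} = \Gamma_{\rho,h_\rho}$ one has $\varphi = \langle \partial_n,\nu\rangle > 0$, since $\Sigma_{\rho,h_\rho}$ meets $\partial C_\rho$ transversely with angle bounded away from $0$ by Lemma \ref{MOTS-existence}, and the normal $\nu$ has strictly positive vertical component there. The ordering of the family $\{\Sigma_{\rho,h}\}$ in $h$ forces $\varphi \ge 0$ throughout $\Sigma'_{\rho,h_\rho}$. The Hopf strong maximum principle applied to the nonneg solution $\varphi$ of the linear second-order elliptic equation $L\varphi = 0$ then yields $\varphi > 0$ everywhere on $\Sigma'_{\rho,h_\rho}$. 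Finally, the smooth dependence of the family on $h$ combined with Proposition \ref{prop-variation-F} yields $f'(h_\rho) = D\F|_{\Sigma'_{\rho,h_\rho}}(X)$, which together with $f'(h_\rho) \ge 0$ proves (\ref{h-rho-inequality}).

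I expect the hardest step to be handling the degenerate heights at which the principal Dirichlet eigenvalue of $L$ vanishes, so that the IFT breaks down and the family $\{\Sigma_{\rho,h}\}$ need not be differentiable in $h$ there. My plan to circumvent this is to perturb the initial data $(g,k)$ by an arbitrarily small, compactly supported modification chosen so that the principal eigenvalue is strictly positive uniformly along the corresponding family of MOTS (a generic condition), to carry out the argument above for the perturbed data to produce $(h_\rho^{\varepsilon}, X^{\varepsilon})$, and then to pass to the limit $\varepsilon \to 0$ using the $\lambda$-minimizing compactness from Lemma \ref{MOTS-convergence} together with the uniform stability and regularity estimates for MOTS with boundary in order to extract a pair $(h_\rho, X)$ satisfying the required properties for the original $(g,k)$.
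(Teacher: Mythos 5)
Your Case 1 (strictly positive principal eigenvalue) is essentially the paper's easy case, but the two places where the real work lies both contain gaps. First, the height-picking: Lemma \ref{lem:downwardjump} gives continuity of $h\mapsto\F(\Sigma_{\rho,h})$ off a countable set and downward jumps on it, but that does \emph{not} make the function of bounded variation, so the Lebesgue decomposition into continuous and jump parts and the ``standard differentiation-of-BV-functions argument'' are not available. More importantly, even if the scalar function were differentiable at some non-jump $h_\rho$ with nonnegative derivative, that by itself produces no deformation vector field: to obtain $X$ with $D\theta^+|_{\Sigma'_{\rho,h_\rho}}(X)=0$ and $D\F|_{\Sigma'_{\rho,h_\rho}}(X)=\tfrac{d}{dh}\F(\Sigma_{\rho,h})|_{h_\rho}$ you need the \emph{family of hypersurfaces} to be differentiable in $h$ at $h_\rho$, which is exactly what may fail when the principal Dirichlet eigenvalue of the operator \eqref{define-L} vanishes. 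The paper avoids ever needing a derivative of $h\mapsto\F(\Sigma_{\rho,h})$ by taking $h_\rho=\inf\{h:\F(\Sigma_{\rho,h})>0\}$, using Lemmas \ref{F-sign} and \ref{lem:downwardjump} to see $\F(\Sigma_{\rho,h_\rho})=0$ and that $h_\rho$ is not a jump height, and then working only with one-sided difference quotients along a sequence $s_k\searrow0$ with $\F(\Sigma_{\rho,h_\rho+s_k})>0$.

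Second, your treatment of the degenerate case is not a proof. The claim that a small compactly supported perturbation of $(g,k)$ makes the principal eigenvalue strictly positive \emph{uniformly along the family} is an unproven genericity assertion for a non-self-adjoint operator with no variational structure, attached to a whole one-parameter family of boundary curves; establishing it would be at least as hard as the lemma itself. Even granting it, the limit $\varepsilon\to0$ breaks down: $\varphi^{\varepsilon}$ solves $L^{\varepsilon}\varphi^{\varepsilon}=0$ with boundary values $\langle\partial_n,\nu\rangle$, and as the principal eigenvalue of the limiting operator tends to zero the Dirichlet solution operator blows up, so there is no uniform $\mathcal{C}^2$ bound on $X^{\varepsilon}$; after renormalizing, the boundary data tends to zero and the limit is a nonnegative solution of $L\varphi=0$ vanishing on $\Gamma_{\rho,h_\rho}$ --- precisely the configuration one must rule out. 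The paper does this in its Case 2 by augmenting the inverse function theorem with a signed-volume parameter (so solutions are parametrized by $(\xi,s)$ even though $\xi(s)$ need not depend nicely on $s$), taking subsequential limits of difference quotients, and excluding the bad subcase via the Hopf lemma together with the boundary formula for $D\F$, which forces $D\F(X)=\int_{\Gamma_{\rho,h_\rho}}\phi\,\nabla_\eta\varphi\,d\barea<0$, contradicting $D\F(X)\ge0$. Your proposal contains no substitute for this mechanism; in addition, the MOTS obtained as a limit of the perturbed data need not coincide with the member $\Sigma_{\rho,h_\rho}$ of the ordered family fixed in Lemma \ref{MOTS-existence}, so even the object the lemma speaks about would change.
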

The proof of this lemma would be straightforward if the path $h\mapsto \Sigma_{\rho,h}$ of $\C^{3,\alpha}$ hypersurfaces were differentiable in $h$ (and if the $\Sigma_{\rho, h}$ were connected). By Lemma \ref{F-sign}, we could find $h_\rho$ such that 
\[\left.\frac{d}{dh} \F(\Sigma_{\rho,h})\right|_{h=h_\rho} \ge0.\]
We would then choose $X$ to be the first-order deformation field of the family $\Sigma_{\rho,h}$ at $h=h_\rho$. The preceding inequality  would turn into (\ref{h-rho-inequality}), and the fact that each $\Sigma_{\rho,h}$ is a MOTS would lead to (\ref{MOTS-variation}).  Unfortunately, $\Sigma_{\rho,h}$ need not be differentiable in $h$. In general, the family $\Sigma_{\rho,h}$ must contain jumps for topological reasons. From Lemma \ref{lem:downwardjump} we know that $\F({\Sigma}_{\rho, h})$ can only jump down at a jump height, so the presence of jumps does not cause problems for finding $h_\rho$ as described above.  However, even in the absence of jumps, the lack of differentiability in $h$ presents a technical challenge.

\begin{notation} For the remainder of this section, we will abbreviate $\Sigma'_{\rho,h_\rho}$ by $\Sigma_\rho$. \end{notation}  
Lemma \ref{choose-h-rho} allows us to conclude the following stability-like property, which the reader should compare to Proposition \ref{symmetrized-stability}.
\begin{lem}\label{stability}
Let $\rho$ be sufficiently large. Let $X$ and $\varphi$ be as in the statement of Lemma \ref{choose-h-rho}.  For every $\C^1$ function $v$ on $\Sigma_\rho$ that is equal to $\phi=\langle \partial_n,\nu\rangle$ along $\partial \Sigma_\rho$ we have that
\begin{equation} \label{translate-stability}
\int_{\Sigma_\rho} (|\nabla v|^2  + Qv^2 )\,d\area
+\int_{\partial\Sigma_\rho} \langle \bar{G}(X),\eta\rangle \,d\barea \geq 0
\end{equation}
where   
\begin{equation}\label{define-Gbar}
\bar{G}(X) = G(X)+\phi \varphi W.
\end{equation}
\end{lem}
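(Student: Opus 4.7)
The plan is to adapt the Galloway--Schoen symmetrization of the MOTS stability operator to our setting, where $v$ does not vanish on $\partial\Sigma_\rho$. The three inputs will be: (i) the linearized MOTS equation $D\theta^+|_{\Sigma_\rho}(X)=0$ from Lemma \ref{choose-h-rho}, which since $\theta^+_{\Sigma_\rho}\equiv 0$ kills the $\nabla_{\hat X}\theta^+$ term in Proposition \ref{prop:Dtheta} and reduces to
\[-\Delta\varphi + 2\langle W,\nabla\varphi\rangle + (\Div W - |W|^2 + Q)\varphi = 0,\]
with $\varphi>0$; (ii) the inequality $D\F|_{\Sigma_\rho}(X)\ge 0$ from the same lemma; and (iii) the equality $\varphi=\phi$ along $\partial\Sigma_\rho$, which holds because $X=Z$ there.

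The first step is the symmetrization. Set $u:=\log\varphi$. Then $\varphi=e^u$ makes the Jacobi equation above become, after dividing by $\varphi$,
\[Q = \Delta u + |\nabla u - W|^2 - \Div W.\]
Multiplying by $v^2$ and integrating by parts (keeping the boundary terms since $v$ is not compactly supported) yields
\[\int_{\Sigma_\rho} Q v^2\,d\area = \int_{\Sigma_\rho} v^2|\nabla u - W|^2 - 2\int_{\Sigma_\rho} v\langle \nabla v,\nabla u - W\rangle + \int_{\partial\Sigma_\rho} v^2(\nabla_\eta u - \langle W,\eta\rangle).\]
Completing the square via $|\nabla v - v(\nabla u-W)|^2\ge 0$ absorbs the cross term at the cost of $-|\nabla v|^2$, so
\[\int_{\Sigma_\rho}(|\nabla v|^2 + Qv^2)\,d\area \;\ge\; \int_{\partial\Sigma_\rho} v^2\bigl(\nabla_\eta u - \langle W,\eta\rangle\bigr)\,d\barea.\]

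To match the required boundary term, I then use that $v=\phi$ and $\varphi=\phi$ on $\partial\Sigma_\rho$. This converts $v^2\nabla_\eta u = \phi^2\nabla_\eta\varphi/\varphi = \phi\,\nabla_\eta\varphi$ and $v^2\langle W,\eta\rangle = \langle \phi\varphi W,\eta\rangle$, giving
\[\int_{\Sigma_\rho}(|\nabla v|^2+Qv^2)\,d\area + \int_{\partial\Sigma_\rho}\langle \phi\varphi W,\eta\rangle\,d\barea \;\ge\; \int_{\partial\Sigma_\rho} \phi\,\nabla_\eta\varphi\,d\barea.\]
Finally, Proposition \ref{prop-variation-F} identifies $D\F|_{\Sigma_\rho}(X) = \int_{\partial\Sigma_\rho}\langle \phi\nabla\varphi + G(X),\eta\rangle\,d\barea$, so the inequality $D\F|_{\Sigma_\rho}(X)\ge 0$ provides
\[\int_{\partial\Sigma_\rho} \phi\,\nabla_\eta\varphi\,d\barea \;\ge\; -\int_{\partial\Sigma_\rho}\langle G(X),\eta\rangle\,d\barea.\]
Chaining this with the previous display and invoking $\bar G(X)=G(X)+\phi\varphi W$ yields (\ref{translate-stability}).

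The main obstacle I foresee is purely one of bookkeeping: ensuring the boundary terms generated by integrating $v^2\Delta u$ and $v^2\Div W$ line up exactly with those that appear in $D\F|_{\Sigma_\rho}(X)$ and in the correction $\phi\varphi W$ that distinguishes $\bar G$ from $G$. This alignment depends on the deliberate choice $X=Z$ on $\partial\Sigma_\rho$ in Lemma \ref{choose-h-rho}, which forces $\varphi=\phi$ and thus turns $v^2\nabla_\eta u$ into $\phi\nabla_\eta\varphi$ so that the test function $v$ plays the role of $\phi$ on the boundary while $\varphi$ plays the role of a positive principal function in the interior; without this coincidence the symmetrization boundary term would not pair cleanly with $D\F$.
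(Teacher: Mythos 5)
Your proposal is correct and follows essentially the same route as the paper: the Galloway--Schoen rewriting of $D\theta^+|_{\Sigma_\rho}(X)=0$ via $\log\varphi$, completion of the square against the test function $v$, integration keeping the boundary term, and then using $v=\phi=\varphi$ on $\partial\Sigma_\rho$ together with Proposition \ref{prop-variation-F} and $D\F|_{\Sigma_\rho}(X)\ge 0$ to convert $\int_{\partial\Sigma_\rho}\phi\,\nabla_\eta\varphi\,d\barea$ into the $G(X)$ term and assemble $\bar G(X)=G(X)+\phi\varphi W$. The only cosmetic difference is that the paper completes the square pointwise in the quantity $v^2\varphi^{-1}D\theta^+|_{\Sigma_\rho}(X)$ before integrating, whereas you divide the Jacobi equation by $\varphi$ first and then integrate by parts, which yields the identical boundary term.
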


\begin{proof}
We begin by following the argument in \cite{Galloway-Schoen:2006}.  Using equation (\ref{dtheta}) and the positivity of $\varphi$, we compute that
\begin{align*}
D\theta^+|_{\Sigma_{\rho}} (X)
&=-\Delta \varphi +2 \langle W, \nabla\varphi\rangle  + (\Div W - |W|^2+Q)\varphi\\
&=-\Delta\varphi  + |\nabla\log \varphi|^2\varphi -|W-\nabla\log\varphi|^2\varphi + (\Div W +Q)\varphi\\
&=-(\Delta \log \varphi)\varphi - |W-\nabla\log\varphi|^2\varphi + (\Div W +Q)\varphi \\
&= [\Div ( W-\nabla \log \varphi)] \varphi  -|W-\nabla\log\varphi|^2 \varphi + Q\varphi.
\end{align*}
Let $v\in \C^{1} (\Sigma)$ be equal to $\phi$ along $\partial\Sigma_\rho$.  We multiply the above equation by ${v^2}{\varphi}^{-1}$ to obtain
\begin{align}
{v^2}{\varphi}^{-1}D\theta^+|_{\Sigma_{\rho}} (X)
&=[\Div ( W-\nabla \log \varphi)]v^2 - |W-\nabla\log\varphi|^2 v^2 + Q v^2  \notag\\
&=\Div (v^2( W-\nabla \log \varphi))-  \langle  W-\nabla \log \varphi, 2v\nabla v\rangle \notag\\
&\quad  - |W-\nabla\log\varphi|^2 v^2+ Q v^2\notag \\
&= \Div(v^2 ( W-\nabla \log \varphi))\notag\\
&\quad + |(W-\nabla \log \varphi)v|^2+ |\nabla v|^2
-|(W-\nabla\log\varphi)v+\nabla v|^2\notag\\
&\quad - |W-\nabla\log\varphi|^2 v^2+ Qv^2 \notag\\
\begin{split} \label{throw-away} 
&=  \Div(v^2 ( W-\nabla \log \varphi))+ |\nabla v|^2+ Qv^2\\
&\quad -|(W-\nabla\log\varphi)v+\nabla v|^2.
\end{split}
\end{align}
Together with equation (\ref{MOTS-variation}), this implies that
\[ 0 \le  |\nabla v|^2+ Qv^2 +  \Div(v^2 ( W-\nabla \log \varphi)).\]
Using that $v=\phi=\varphi$ along $\partial\Sigma_\rho$, we estimate
\begin{align*}
0&\le \int_{\Sigma_\rho} (|\nabla v|^2  + Qv^2)\,d\area +\int_{\partial\Sigma_\rho}\langle v^2 (W-\nabla \log \varphi),\eta\rangle   \,d\barea \\
&= \int_{\Sigma_\rho} (|\nabla v|^2  + Qv^2)\,d\area +\int_{\partial\Sigma_\rho}\langle \phi\varphi  W - \phi\nabla \varphi,\eta\rangle   \,d\barea \\
&= \int_{\Sigma_\rho} (|\nabla v|^2  + Qv^2)\,d\area  +\int_{\partial\Sigma_\rho}\langle \bar{G}(X),\eta\rangle   \,d\barea 
-D\F|_{\Sigma_\rho}(X)\\
&\le \int_{\Sigma_\rho} (|\nabla v|^2  + Qv^2)\,d\area  +\int_{\partial\Sigma_\rho}\langle \bar{G}(X),\eta\rangle   \,d\barea
\end{align*}
where we used (\ref{variation-F}) and (\ref{h-rho-inequality}) in the third and fourth lines, respectively.
\end{proof}

\subsection{Analysis of the complete MOTS limit} \label{section-complete}

For the remainder of the proof of Theorem \ref{thm:PMT} we follow the strategy for the time-symmetric case treated in \cite{Schoen:1989} closely.
By Lemma \ref{asymptotic-decay}, there is a sequence $\rho_j\to\infty$ such that the $\Sigma_{\rho_j}$ converge locally in $\C^{3,\alpha}$ to a complete MOTS $\Sigma_\infty$ that has the properties described in Lemma \ref{asymptotic-decay}.

Lemma \ref{compact-stability} is strong enough to show that $\Sigma_\infty$ is conformal to a scalar-flat asymptotically flat manifold but it does not provide sufficient control on the change of mass effected by this conformal change. For that, we need to allow test functions that are asymptotic to $1$. To justify the use of such test functions we need some uniform control on the $\Sigma_\rho$'s as in the following lemma.

\begin{lem} \label{uniform-control} Let $Z$ be a $\C^2$ vector field on $M$ that is equal to $\partial_n$ outside a compact set and let $Z = \phi \nu + \hat Z$ be the decomposition of $Z$ into normal and tangential part along $\Sigma_\rho$. 
The following estimate holds uniformly in $\rho$ large.
\begin{align} \left( \int_{\Sigma_\rho\smallsetminus C_r} \left(|\nabla
\phi|^2  + Q_{\Sigma_\rho}\phi^2\right)\,d\area 
+\int_{\partial(\Sigma_\rho\smallsetminus C_r)}
\langle\bar{G}(Z),\eta\rangle\,d\barea\right)
= O(r^{-1}).
\end{align}
 \end{lem}
\begin{proof} The estimates 
\begin{align}
 D\theta^+|_{\Sigma_\rho}(Z) &=O(|x|^{-n})\label{theta-estimate}  \\ 
D\F|_{\Sigma_\rho\cap C_r}(Z)&=O(r^{-1})\label{F-estimate} 
\end{align}
hold uniformly in $\rho$ large. To see this, we use the harmonic asymptotics and formula (\ref{mean-curvature}) to obtain that
\[H = u^{\frac{-2}{n-2}}\left(H_0
+\tfrac{2(n-1)}{n-2}u^{-1}\nabla_{\nu_0}u\right)\]
where $H_0$ and $\nu_0$ are the mean curvature and upward unit normal with
respect to the Euclidean metric.  Clearly, $H_0$ and $\nu_0$ do not change
under vertical translation, and $H_0$ is bounded. Therefore, the decay of
$u$ implies that
$DH|_{\Sigma_\rho}(Z) =O(|x|^{-n})$.  The $\tr_{\Sigma_\rho} k $ term is
easier to handle, giving us \eqref{theta-estimate}.  To derive
\eqref{F-estimate}, we simply use \eqref{alt-F}, the fact that $\nu_0$ does
not change under vertical translation, and the decay of $u$.

We vary $\Sigma_\rho$ in the direction $Z$, which is
just vertical translation outside a compact set.  We  repeat the argument from the beginning of the proof of Lemma \ref{stability}, except that we use $\Sigma_\rho\smallsetminus C_r$ instead of $\Sigma_\rho$, the vector
field $Z$ instead of $X$, the function $\phi$ instead of $\varphi$, and that we
take $v=\phi$. On $\Sigma_\rho
\smallsetminus C_r$, equation (\ref{throw-away}) becomes
\begin{align*}
{\phi} D\theta^+|_{\Sigma_\rho} (Z)
&=  \Div(\phi^2  W- \phi \nabla \phi)+ |\nabla \phi|^2+
Q\phi^2-|W|^2\phi^2.
\end{align*}
Therefore, 
\begin{align*}
& \int_{\Sigma_\rho\smallsetminus C_r} (|\nabla \phi|^2  + Q
\phi^2)\,d\area +\int_{\partial(\Sigma_\rho \smallsetminus C_r)}\langle
\bar{G}(Z),\eta\rangle   \,d\barea\\
&\quad =  \int_{\Sigma_\rho\smallsetminus C_r} (|\nabla \phi|^2  + Q
\phi^2)\,d\area +\int_{\partial(\Sigma_\rho \smallsetminus C_r)}\langle
\phi^2  W- \phi \nabla \phi,\eta\rangle  \,d\barea\\
&\quad\quad +D\F|_{\Sigma_\rho}(Z)-D\F|_{\Sigma_\rho\cap C_r}(Z)\\
&\quad= \int_{\Sigma_\rho\smallsetminus C_r}\left( {\phi}
D\theta^+|_{\Sigma_\rho} (Z) + |W|^2\phi^2\right)\,  d\area\\
&\quad=O(r^{-1})
\end{align*}
where the last line follows from the \eqref{theta-estimate} and
\eqref{F-estimate}, decay of $W$, and volume control coming from the almost minimizing property of $\Sigma_\rho$.  
\end{proof}

The following lemma is the analytic consequence of our careful
height-picking in Section \ref{section-height-picking}.

\begin{lem}\label{stability-infty} 
Let $v$ be a function on $\Sigma_\infty$ such that $v-1\in
W_{\frac{3-n}{2}}^{1,2}(\Sigma_\infty)$.  Then
\begin{equation}\label{stability-infty-inequality}
\int_{\Sigma_\infty} \left(|\nabla v |^2  + Q_{\Sigma_\infty}
v^2\right)\,d\area
\geq 0.
\end{equation}
\end{lem}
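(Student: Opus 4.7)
The plan is to derive the inequality by applying Lemma \ref{stability} to carefully chosen test functions on the approximating MOTS $\Sigma_{\rho_j}$ and using Lemma \ref{uniform-control} to absorb the boundary contribution at infinity.

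First I would reduce, via density of smooth compactly supported functions in the weighted Sobolev space, to the case where $w := v - 1$ is smooth with $\spt(w) \subset C_{r_0}$ for some $r_0$. Fix $r \gg r_0$ and a cutoff $\chi$ on $M$ with $\chi \equiv 1$ on $C_r$, $\chi \equiv 0$ outside $C_{2r}$, and $|\nabla \chi| = O(r^{-1})$. Using the $\mathcal{C}^{3,\alpha}_{loc}$ convergence $\Sigma_{\rho_j} \to \Sigma_\infty$ from Lemma \ref{asymptotic-decay}, I pull $v$ back to $\Sigma_{\rho_j}$ (trivially extending by $1$ outside the diffeomorphism's domain, consistently since $v \equiv 1$ there), and define
\[
v_{\rho_j} := \chi v + (1 - \chi)\phi
\]
on $\Sigma_{\rho_j}$, where $\phi = \langle Z, \nu\rangle$. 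Since $\chi = 0$ near $\partial \Sigma_{\rho_j}$, we have $v_{\rho_j} = \phi = \varphi$ on $\partial \Sigma_{\rho_j}$, so Lemma \ref{stability} applies and yields
\[
0 \leq \int_{\Sigma_{\rho_j}}(|\nabla v_{\rho_j}|^2 + Q v_{\rho_j}^2)\, d\area + \int_{\partial \Sigma_{\rho_j}} \langle \bar{G}(X), \eta\rangle \, d\barea.
\]

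I would then split the integral over $\Sigma_{\rho_j}$ into the three pieces $\Sigma_{\rho_j} \cap C_r$, $\Sigma_{\rho_j} \cap (C_{2r} \setminus C_r)$, and $\Sigma_{\rho_j} \setminus C_{2r}$. On the innermost piece, where $v_{\rho_j} = v$, $\mathcal{C}^{3,\alpha}_{loc}$ convergence gives
\[
\lim_{j \to \infty}\int_{\Sigma_{\rho_j} \cap C_r}(|\nabla v|^2 + Q v^2)\, d\area = \int_{\Sigma_\infty \cap C_r}(|\nabla v|^2 + Q v^2)\, d\area,
\]
which exhausts $\int_{\Sigma_\infty}(|\nabla v|^2 + Q v^2)\, d\area$ as $r \to \infty$. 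On the annular region, the decay rates $\phi - 1 = O(|x|^{2-n})$, $|\nabla \phi| = O(|x|^{1-n})$ (from harmonic asymptotics together with the asymptotics of $\Sigma_\infty$ in Lemma \ref{asymptotic-decay}), $Q = O(|x|^{-n})$, and $\area = O(r^{n-1})$ combine to bound this transition-region contribution by $O(r^{-1})$ uniformly in $j$. On the outer region, Lemma \ref{uniform-control} yields
\[
\int_{\Sigma_{\rho_j} \setminus C_{2r}}(|\nabla \phi|^2 + Q \phi^2)\, d\area + \int_{\partial (\Sigma_{\rho_j} \setminus C_{2r})} \langle \bar{G}(Z), \eta\rangle\, d\barea = O(r^{-1}),
\]
while the extra boundary integral arising at $\Sigma_{\rho_j} \cap \partial C_{2r}$ is also $O(r^{-1})$ since $\bar{G}(Z) = O(|x|^{1-n})$ and the slice has $(n-2)$-dimensional area $O(r^{n-2})$. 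Sending $\rho_j \to \infty$ first and then $r \to \infty$ in the three-piece decomposition yields the required inequality.

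The main technical obstacle is matching the boundary term $\int_{\partial \Sigma_{\rho_j}} \langle \bar{G}(X), \eta\rangle$ from Lemma \ref{stability}, phrased in terms of the specific vector field $X$ from Lemma \ref{choose-h-rho}, with the $\bar{G}(Z)$ term controlled by Lemma \ref{uniform-control}. Since $\bar{G}$ involves tangential derivatives of its argument, equality at $\partial \Sigma_{\rho_j}$ requires $X \equiv Z$ in a collar neighborhood of $\partial \Sigma_{\rho_j}$, not just pointwise on the boundary. I expect this strengthening to be compatible with the construction in Lemma \ref{choose-h-rho}, since the MOTS-linearization $D\theta^+|_\Sigma(X) = 0$ is elliptic and the prescribed boundary value can be imposed more strongly in a collar without disturbing the positivity $\varphi > 0$.
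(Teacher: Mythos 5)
Your overall strategy is essentially the paper's own: apply Lemma \ref{stability} on $\Sigma_{\rho_j}$ to a test function that equals $\phi=\langle Z,\nu\rangle$ near $\partial\Sigma_{\rho_j}$, use Lemma \ref{uniform-control} to control the far region together with its boundary terms uniformly in $j$, interchange the limits $j\to\infty$ and $r\to\infty$, and invoke density in $W^{1,2}_{\frac{3-n}{2}}(\Sigma_\infty)$. (The paper performs the density step at the end, relative to $\phi_\infty$ rather than $1$; your initial reduction needs the same justification, namely the finiteness \eqref{finite-integral} coming from $Q_{\Sigma_\infty}=O(|x'|^{-n})$ and the Euclidean volume growth, so that the cross terms vanish in the limit. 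Your annulus and $\Sigma_{\rho_j}\cap\partial C_{2r}$ estimates are fine for fixed $r$, since the $\mathcal{C}^{3,\alpha}_{loc}$ convergence transfers the decay of $\phi_\infty-1$, $\nabla\phi_\infty$, $Q_{\Sigma_\infty}$ and $\bar{G}_\infty(Z)$ to $\Sigma_{\rho_j}$ for large $j$.)

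The step that would fail is your proposed repair of the boundary-term mismatch. You cannot strengthen Lemma \ref{choose-h-rho} so that $X\equiv Z$ on a collar of $\partial\Sigma_{\rho,h_\rho}$: by \eqref{dtheta} and \eqref{define-L}, the requirement $D\theta^+|_{\Sigma_{\rho,h_\rho}}(X)=0$ forces the normal component $\varphi=\langle X,\nu\rangle$ to solve $L_{\Sigma_{\rho,h_\rho}}\varphi=0$, whereas $X\equiv Z$ on a collar would force $\varphi=\phi$ there, and $\phi$ is not in the kernel of $L_{\Sigma_{\rho,h_\rho}}$ in general; moreover, in the proof of Lemma \ref{choose-h-rho} the field $X$ is (a limit of) the first-order deformation of the family $h\mapsto\Sigma_{\rho,h}$, and altering it near the boundary would also change $\nabla_\eta\varphi$ and hence destroy \eqref{h-rho-inequality}, which is exactly what the proof of Lemma \ref{stability} consumes. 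Fortunately the strengthening is unnecessary: $\langle \bar{G}(X),\eta\rangle$ along $\partial\Sigma$ depends only on the restriction of $X$ to $\partial\Sigma$. Indeed, in \eqref{def-G} the only terms involving derivatives of $X$ are $-D_{\hat{Z}}\hat{X}$ and $(\Div_\Sigma\hat{X})\hat{Z}$; writing $\hat{Z}=\psi\eta+Z^\partial$ along $\partial\Sigma$ as in the proof of Proposition \ref{prop-variation-F}, their $\eta$-components combine to $-\langle D_{Z^\partial}\hat{X},\eta\rangle+\psi\sum_{i=1}^{n-2}\langle D_{e_i}\hat{X},e_i\rangle$ for an orthonormal frame $e_1,\ldots,e_{n-2}$ of $T(\partial\Sigma)$, i.e.\ the $\nabla_\eta$-derivatives of $\hat{X}$ cancel and only derivatives tangent to $\partial\Sigma$ survive; the normal-derivative dependence of $D\F|_\Sigma(X)$ is carried entirely by the $\phi\nabla\varphi$ term, not by $G(X)$. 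Since Lemma \ref{choose-h-rho} already gives $X=Z$ (hence $\varphi=\phi$, $\hat{X}=\hat{Z}$) along all of $\partial\Sigma_{\rho_j}$, it follows that $\langle\bar{G}(X),\eta\rangle=\langle\bar{G}(Z),\eta\rangle$ pointwise on $\partial\Sigma_{\rho_j}$; this is precisely what lets one pass from the boundary term of Lemma \ref{stability} to the $\bar{G}_j(Z)$ terms matched by Lemma \ref{uniform-control}, and with this substitution your argument goes through as written.
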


\begin{proof} Following the notation of Lemma \ref{asymptotic-decay}, we
use coordinates $x'$ on $\Sigma_\infty \smallsetminus B$ where $B$ is a large
compact subset of $M$. Lemma \ref{asymptotic-decay} gives
that 
\[Q_{\Sigma_\infty} =
\tfrac{1}{2}R_{\Sigma_\infty}-\mu-J(\nu_{\Sigma_\infty})-\tfrac{1}{2}|k_{\Sigma_\infty}+B_{\Sigma_\infty}|^2
= O(|x'|^{{-n}}).\]
Using that the volume of $\Sigma_\infty$ grows like that of 
$\mathbb{R}^{n-1}$, it follows that 
\begin{equation}\label{finite-integral}
\int_{\Sigma_\infty} \left(|\nabla v |^2  + |Q_{\Sigma_\infty}|
v^2\right)\,d\area< \infty
\end{equation}
provided that $v-1\in W_{\frac{3-n}{2}}^{1,2}(\Sigma_\infty)$.

Below we abbreviate $\Sigma_{\rho_j} = \Sigma'_{\rho_j,
h_{\rho_j}}$ by $\Sigma_j$. We also use a subscript $j$ on geometric quantities to indicate that they are computed with respect to $\Sigma_j$. We
use $\eta_j$ to denote the outward unit normal of $\partial(\Sigma_j \cap
C_r)$ in $\Sigma_j$ for large $r < \rho_j$ such that $\partial C_r$ is transverse to each
$\Sigma_j$. Fix a $\C^2$ vector field $Z$ on $M$ that agrees with
$\partial_n$ outside a compact set. Let $\phi_\infty = \langle
\nu_{\Sigma_\infty}, Z \rangle$ and $\phi_j = \langle \nu_{\Sigma_j}, Z
\rangle$. It follows from Lemma \ref{asymptotic-decay} that $\phi_\infty
- 1= O^{1, \alpha}(|x'|^{2-n})  \in W^{1, 2}_{(3-n)/2}(\Sigma_\infty)$ and that
$\bar{G}_\infty(Z)=O(|x'|^{1-n})$ where $\bar{G}_\infty$ is as in \eqref{define-Gbar}. We have that

\begin{align*}
&\int_{\Sigma_\infty} \left(|\nabla\phi_\infty|^2  + Q_\infty
\phi_\infty^2\right)\,d\area\\
&=\lim_{r\to\infty}\int_{\Sigma_\infty\cap C_r}
\left(|\nabla\phi_\infty|^2  + Q_\infty \phi_\infty^2\right)\,d\area\\
&=\lim_{r\to\infty}\left(
\int_{\Sigma_\infty\cap C_r} \left(|\nabla\phi_\infty|^2  + Q_\infty
\phi_\infty^2\right)\,d\area 
+\int_{\partial(\Sigma_\infty\cap C_r)} \langle
\bar{G}_\infty(Z),\eta_\infty\rangle  \,d\barea\right)\\
&=\lim_{r\to\infty}\lim_{j\to\infty}\left(
\int_{\Sigma_j\cap C_r} \left( |\nabla\phi_j|^2  + Q_j
\phi_j^2\right)\,d\area 
+\int_{\partial(\Sigma_j\cap C_r)} \langle \bar{G}_j(Z),\eta_j\rangle 
\,d\barea\right)\\
&=\lim_{j\to\infty}\left(
\int_{\Sigma_j} \left(|\nabla\phi_j|^2  + Q_j \phi_j^2\right)\,d\area 
+\int_{\partial\Sigma_j} \langle \bar{G}_j(Z),\eta_j\rangle 
\,d\barea\right)\\
&\ge 0,
\end{align*}
where the last inequality follows from Lemma \ref{stability}, and the
fourth equality follows from Lemma \ref{uniform-control}. It follows that
(\ref{stability-infty-inequality}) holds for $v = \phi_\infty$.  Moreover, since the argument works for
any $Z$ that equals $\partial_n$ outside a compact set, (\ref{stability-infty-inequality}) holds for
all test functions that agree with $\phi_\infty$ outside a compact set.

We now argue by density that (\ref{stability-infty-inequality})  holds for any function $v$ such that
$v - \phi_\infty \in W^{1, 2}_{(3-n)/2}(\Sigma_\infty)$. Note that
$\C_c^{3,\alpha} (\Sigma_\infty)$ is dense in $W^{1,
2}_{(3-n)/2}(\Sigma_\infty)$. Let $v_i - \phi_\infty$ be a sequence of functions in 
$\C^{3,\alpha}_c(\Sigma_\infty)$ that converges to $v-\phi_\infty$ in $W^{1, 2}_{(3-n)/2}(\Sigma_\infty)$. It is
straightforward to check that 
\begin{align*}
0 &\leq \liminf_{i \to \infty} \int_{\Sigma_\infty}\left( |\nabla v_i|^2  +
Q_{\infty} v_i^2\right)\, d\area \\  
&=\liminf_{i \to \infty} \int_{\Sigma_\infty}\left[ ( |\nabla v|^2  +
Q_{\infty} v^2) + (2\nabla v\cdot\nabla (v_i-v)+2Q_\infty v(v_i-v))\right.\\
&\quad\left.+
( |\nabla (v_i-v)|^2  +
Q_{\infty} (v_i-v)^2)\right]\, d\area \\  
&=  \int_{\Sigma_\infty}\left( |\nabla v|^2 +
Q_{\infty} v^2\right)\, d \area
\end{align*}
where the cross terms vanish because of \eqref{finite-integral}. This implies the result since $\phi_\infty
- 1\in W^{1, 2}_{(3-n)/2} (\Sigma_\infty)$.
\end{proof}

%%%%%%%%%%%%%%%%%%%%%%%%%%%%%%%%%%%%%%%%%%%%%%%%%
%%%%%%%%%%%%%%%%%%%%%%%%%%%%%%%%%%%%%%%%%%%%%%%%%
%%%%%%%%%%%%%%%%%%%%%%%%%%%%%%%%%%%%%%%%%%%%%%%%%

We are now ready to derive a contradiction to our induction hypothesis.  Without loss of generality, we throw away all compact components of $\Sigma_\infty$, noting that Lemmas \ref{compact-stability} and \ref{stability-infty} still hold on the remaining asymptotically flat component.  
The strict dominant energy condition implies that  $Q_\infty< \tfrac{1}{2}R_{\Sigma_\infty}$. Lemma  \ref{compact-stability} implies that for any nonzero $v\in W_{(3-n)/2}^{1,2}(\Sigma_\infty)$, 
 we have
\[ \int_{\Sigma_\infty} \left(|\nabla v |^2  + \tfrac{1}{2}R_{\Sigma_\infty} v^2\right)\,d\area
> 0.\]
Using that $n>3$, we have
\begin{equation}\label{conformal-stability}
 \int_{\Sigma_\infty} \left(|\nabla v |^2  +  \tfrac{n-3}{4(n-2)} R_{\Sigma_\infty} v^2\right)\,d\area
> 0
\end{equation}
for all nonzero $v\in W_{(3-n)/2}^{1,2}(\Sigma_\infty)$.
This implies that the conformal Laplacian
\[\Delta_{\Sigma_\infty} - \tfrac{n-3}{4(n-2)} R_{\Sigma_\infty} : W_{\frac{3-n}{2}}^{1,2}(\Sigma_\infty)\to W_{\frac{-1-n}{2}}^{-1,2}(\Sigma_\infty)\]
is an isomorphism.  In particular, we can find some nonzero $v\in W_{(3-n)/2}^{1,2}(\Sigma_\infty)$ such that
\[\Delta_{\Sigma_\infty} v - \tfrac{n-3}{4(n-2)} R_{\Sigma_\infty}v= \tfrac{n-3}{4(n-2)} R_{\Sigma_\infty}. \]
Setting $w=1+v$, we have that
\[\Delta_{\Sigma_\infty} w - \tfrac{n-3}{4(n-2)} R_{\Sigma_\infty}w=0.\]
Note that $w\in\C^{2,\alpha}_{loc}$ by elliptic regularity. By \cite[Theorem 2]{Meyers:1963}, we have that $w(x')=1+O^{2, \alpha}(|x'|^{3-n})$.  Let $g_\infty$ be the induced metric on $\Sigma_\infty$. Applying (\ref{scalar-curvature}) in dimension $n-1$, the conformal metric $w^{\frac{4}{n-3}}g_\infty$ on $\Sigma_\infty$ is  asymptotically flat with zero scalar curvature.  Using (\ref{mass-change-u}) in dimension $n-1$, this metric has energy 
\begin{align*}
E(w^{\frac{4}{n-3}}g_\infty) &= E(g_\infty)-  \tfrac{2}{(n-3)\omega_{n-2}}\lim_{r\to\infty}\int_{\partial(\Sigma_\infty\cap C_r)} w \nabla_{\nu} w \,d\area\\
&= 0- \tfrac{2}{(n-3)\omega_{n-2}} \int_{\Sigma_\infty}( |\nabla w|^2 +w\Delta w )\, d\area\\
&=  \tfrac{-2}{(n-3)\omega_{n-2}} \int_{\Sigma_\infty}\left( |\nabla w|^2 + \tfrac{n-3}{4(n-2)} R_{\Sigma_\infty}w^2\right)\, d\area.
\end{align*}
Since $w-1\in W_{(3-n)/2}^{1,2}(\Sigma_\infty)$ is nonzero, we can apply  Lemma \ref{stability-infty} and the short argument used to derive (\ref{conformal-stability}) to conclude that
\[ \int_{\Sigma_\infty} \left(|\nabla w |^2  +  \tfrac{n-3}{4(n-2)} R_{\Sigma_\infty} w^2\right)\,d\area
> 0.\]
It follows that $E(w^{\frac{4}{n-3}}g_\infty)<0$.  This contradicts the time-symmetric case of Theorem \ref{thm:PMT} in dimension $n-1$. In particular, it contradicts our induction hypothesis. 

%%%%%%%%%%%%%%%%%%%%%%%%%%%%%%%%%%%%%%%%%%%%%%%%%
%%%%%%%%%%%%%%%%%%%%%%%%%%%%%%%%%%%%%%%%%%%%%%%%%
%%%%%%%%%%%%%%%%%%%%%%%%%%%%%%%%%%%%%%%%%%%%%%%%%

\subsection{Proof of Lemma \ref{choose-h-rho}}\label{not-smooth}

We define
\begin{equation}\label{define-h-rho}
h_\rho=\inf \{ h\in[-\Lambda,\Lambda]  \,|\, \mathcal{F}(\Sigma_{\rho,h})>0\}.
\end{equation}
Our goal for this section is to prove that this choice of $h_\rho$ satisfies the conclusion of Lemma \ref{choose-h-rho}.  For now let us assume that $\Sigma_{\rho,h_\rho}$ is connected.

Using Lemmas \ref{F-sign} and  \ref{lem:downwardjump} it is easy to see that $h_\rho$ exists, lies in $(-\Lambda, \Lambda)$, is not a jump height, and that $\F(\Sigma_{\rho,h_\rho})=0$.   Although the path $h \mapsto \Sigma_{\rho,h}$ of $\C^{3,\alpha}$ hypersurfaces need not be differentiable in $h$ at $h_\rho$, the continuity of the map $h\mapsto \Sigma_{\rho,h}$ at $h=h_\rho$ allows us to use the inverse function theorem to describe the family $\Sigma_{\rho,h}$ precisely for $h$ near $h_\rho$, as we will see below.  We note that the work of B.\ White \cite{White:1987} on the moduli space of minimal submanifolds with boundary in $\rr^n$ is useful for understanding the family $\Sigma_{\rho,h}$.  However, since we do not need the full power of \cite{White:1987}, we choose to use a simpler approach similar to that of \cite{Galloway:2008} in our Case 2 below. 

Since $\Sigma_{\rho,h_\rho}$ is transverse to $\partial C_\rho$, we can find a $\C^{2,\alpha}$ vector field $\partial_\tau$ such that $\langle\partial_\tau,\nu\rangle >0$ along $\Sigma_{\rho,h_\rho}$, and $\partial_\tau= Z=\partial_n$ at $\Gamma_{\rho,h_\rho}$.  By integrating $\partial_\tau$, there exists a relatively open neighborhood $U$ of $\Sigma_{\rho,h_\rho}$ in $C_\rho$ and a $\C^{2,\alpha}$ diffeomorphism 
$F: \Sigma_{\rho,h_\rho}\times (-\delta,\delta)\to U$ such that $F(\cdot, 0)$ is the identity map and  such that $F_*(\frac{\partial}{\partial\tau})=\partial_\tau$.   In particular, $F(\cdot,\tau)$ maps $\partial\Sigma_{\rho,h_\rho}=\Gamma_{\rho,h_\rho}$ to $\Gamma(h_\rho+\tau)$ by vertical translation.   

Every $\C^{2,\alpha}$ function $w:\Sigma_{\rho,h_\rho} \to (-\delta,\delta)$ gives rise to a graph in $\Sigma_{\rho,h_\rho}\times (-\delta,\delta)$ whose image under  the diffeomorphism $F$ is a $\C^{2,\alpha}$ hypersurface in $U$ denoted by $\graph[w]$ whose boundary lies on $\partial C_\rho$.  Since $h_\rho$ is not a jump height, Lemma \ref{MOTS-convergence} tells us that for each $h$ sufficiently close to $h_\rho$ the MOTS $\Sigma_{\rho,h}$ coincides with $\graph[w]$ for a unique $\C^{2,\alpha}$ function $w$.   Moreover, the function $w$ converges to $0$ in $\C^{2,\alpha}$ as $h$ approaches $h_\rho$.  

The operator $L_{\Sigma_{\rho,h_\rho}}$ on $\Sigma_{\rho,h_\rho}$ defined by (\ref{define-L}) has a principal Dirichlet eigenvalue, which is nonnegative because  $\Sigma_{\rho,h_\rho}$  is stable.  We consider two cases.\\

\textbf{Case 1:}  The principal eigenvalue of $L_{\Sigma_{\rho,h_\rho}}$ is positive.  \\

\noindent Define the map\footnote{Technically, it is only defined on some neighborhood of the origin.} 
$\Psi: \C_0^{2,\alpha}(\Sigma_{\rho,h_\rho})\times \rr \to  \C^{0,\alpha}(\Sigma_{\rho,h_\rho})\times \rr$
by
\[\Psi (w, s) = (\theta^+_{\graph[w+s]}, s).\]
Using (\ref{dtheta}) and (\ref{define-L}), we see that
\[ D\Psi|_{(0,0)} (w', s') = (L_{\Sigma_{\rho,h_\rho}}\langle(w'+s') \partial_\tau, \nu\rangle, s').\]
By assumption, $L_{\Sigma_{\rho,h_\rho}}:\C_0^{2,\alpha}\to \C^{0,\alpha}$ is an isomorphism. It follows that $D\Psi|_{(0,0)} : \C_0^{2,\alpha}(\Sigma_{\rho,h_\rho})\times \rr \to  \C^{0,\alpha}(\Sigma_{\rho,h_\rho})\times \rr$ is an isomorphism as well.  The inverse function theorem gives a $\C^{1}$ function 
$\Phi: (-\epsilon,\epsilon)\to  \C_0^{2,\alpha}(\Sigma_{\rho,h_\rho})$ such that $\Phi(s)$ is the unique small solution of the equation $\Psi(\Phi(s), s)=(0,s)$. Its graph is the unique $\C^{2,\alpha}$ nearby MOTS with boundary $\Gamma_{\rho,h_{\rho}+s}$.  By continuity of $\Sigma_{\rho,h}$ at $h=h_\rho$ and  uniqueness, we have that $\Sigma_{\rho,h_\rho+s}=\graph[\Phi(s)+s]$ for small $s$.  It follows that $h\mapsto\Sigma_{\rho,h}$ is a $\C^{1}$ path into the space of $\C^{2,\alpha}$ hypersurfaces near $h = h_\rho$ to which the straightforward argument described in Section \ref{section-height-picking} applies.  Specifically, consider the $C^{2, \alpha}$ first-order deformation field $X= (\frac{d\Phi}{ds}+1)\partial_\tau$  of the family $\Sigma_{\rho,h}$.  Then $D\theta^+|_{\Sigma_{\rho,h_\rho}}(X)=\frac{d}{dh}\theta^+_{\Sigma_{\rho,h}}|_{h=h_\rho}=0$ because each $\Sigma_{\rho,h}$ is a MOTS.  That $D\F|_{\Sigma_{\rho,h_\rho}}(X)=\left.\frac{d}{dh}\F(\Sigma_{\rho,h})\right|_{h=h_\rho}\ge 0$ follows from the construction of $h_\rho$.  The normal component $\varphi$ of $X$ along $\Sigma_{\rho,h}$ is nonnegative and positive on $\Gamma_{\rho,h}$ by construction, and thus positive everywhere by the strong maximum principle applied to the linear equation $L_{\Sigma_{\rho,h_\rho}} \varphi=0$.  This completes the proof of Lemma \ref{choose-h-rho} for Case 1. \\
 
\textbf{Case 2:}  The principal eigenvalue of $L_{\Sigma_{\rho,h_\rho}}$ is zero. \\

\noindent In this case, since $\Sigma_{\rho,h_\rho}$ is connected, $L_{\Sigma_{\rho,h_\rho}}$  has a one-dimensional kernel in $\C_0^{2,\alpha}$ that is generated by a function that is positive away from the boundary.  The same is then true for the adjoint $L_{\Sigma_{\rho,h_\rho}}^*$.  
  We define the map
$\Psi: \C_0^{2,\alpha}(\Sigma_{\rho,h_\rho})\times \rr^2\to  \C^{0,\alpha}(\Sigma_{\rho,h_\rho})\times \rr^2$ by
\[\Psi (w, \kappa, s) = \left( - \kappa + \theta^+_{\graph[w+s]} ,  V(\graph[w+s]), s\right)\]
where $V(\Sigma)$ denotes the signed volume of the region of $C_\rho$ lying above $\Sigma_{\rho,h_\rho}$ and below $\Sigma$.  Then 
\[ D\Psi|_{(0,0,0)} (w', \kappa', s') = \left(L_{\Sigma_{\rho,h_\rho}}\langle (w'+s')\partial_\tau, \nu\rangle -\kappa' , \int_{\Sigma_{\rho,h_\rho}} \langle (w'+s')\partial_\tau, \nu\rangle\,d\area ,  s'\right).\]
We claim that $D\Psi|_{(0,0,0)}$ is injective.  If $D\Psi|_{(0,0,0)} (w', \kappa', s')=(0,0,0)$, then obviously $s'=0$, and since the image of $\C^{2,\alpha}_0$ under $L_{\Sigma_{\rho,h_\rho}}$ is orthogonal to the kernel of $L_{\Sigma_{\rho,h_\rho}}^*$, $\kappa'=0$.  Finally, $\langle w'\partial_\tau, \nu\rangle$ is in the kernel of $L_{\Sigma_{\rho,h_\rho}}$ and has zero integral, and is therefore zero, proving the claim.  Since $D\Psi|_{(0,0,0)}$ has index zero, it is also an isomorphism. By the inverse function theorem, there exists a $\C^{1}$ function
\[
(\Phi_1,\Phi_2): (\epsilon,\epsilon)\times  (\epsilon,\epsilon)\to \C_0^{2,\alpha}(\Sigma_{\rho,h_\rho})\times\rr
\]
such that $(\Phi_1(\xi,s),\Phi_2(\xi,s))$ is the unique small solution of the equation
\[\Psi(\Phi_1(\xi,s), \Phi_2(\xi,s), s)=(0,\xi,s).\]  In particular, $\graph[\Phi_1(\xi,s)+s]$ is the unique $\C^{2,\alpha}$-nearby constant $\theta^+$ hypersurface whose boundary is $\Gamma_{\rho,h_\rho+s}$ and whose signed volume is $\xi$.  Define $\xi(s):=V(\Sigma_{\rho,h_\rho+s})$.  By continuity of $\Sigma_{\rho,h}$ at $h=h_\rho$ and the uniqueness, it must be the case that $\Sigma_{\rho,h_\rho+s}=\graph[\Phi_1(\xi(s),s)+s]$ for small $s$.  Note that since $\Sigma_{\rho,h_\rho+s}$ lies strictly above $\Sigma_{\rho,h_\rho}$, the function $\Phi_1(\xi(s),s)+s$ must be positive.  The complication here is that we do not know that $\xi(s)$ depends on $s$ in any nice way.  Below, we will see that this does not matter.

By the construction of $h_\rho$, we can choose a sequence $s_k\searrow 0$ such that 
\[\F(\Sigma_{\rho,h_\rho+s_k})>0.\]  
Let us pass to a subsequence such that the unit vector in the direction of $(\xi(s_k) ,s_k)$ converges.  We consider two subcases. \\

\textbf{Case 2a:} The ratio $\frac{\xi(s_k)}{s_k}$ converges to a finite number as $k\to\infty$. \\

\noindent The basic idea here is while we cannot take derivatives as in Case 1, we can take subsequential limits of difference quotients instead.
The hypothesis of this subcase implies that 
$\frac{1}{s_k}[\Phi_1 (\xi(s_k), s_k)+s_k]$ converges to a $\C^{2,\alpha}$ function $\bar{\varphi}$.  Therefore we can take the limit of $\frac{1}{s_k}\theta^+_{\graph[\Phi_1 (\xi(s_k), s_k)+s_k]}=0$ as $k \to \infty$ to obtain 
\[L_{\Sigma_{\rho,h_\rho}} (\langle \bar{\varphi}\partial_\tau, \nu\rangle)=0.\]  We claim that the conclusion of Lemma \ref{choose-h-rho} holds for $X=\bar{\varphi}\partial_\tau\in\C^{2,\alpha}$.  The previous equation tells us that $D\theta^+|_{\Sigma_{\rho,h_\rho}}(X)=0$. Moreover, the normal component $\varphi=\bar{\varphi}\langle \partial_\tau, \nu\rangle$ is nonnegative and equal to $\langle Z, \nu\rangle>0$ at $\Gamma_{\rho,h_\rho}$. In fact, we see that $\varphi$ is positive by the strong maximum principle applied to the operator $L_{\Sigma_{\rho,h_\rho}}$.  Finally, since $\F(\Sigma_{\rho,h_\rho+s_k})>0$, we have that
\begin{align*}
D\F|_{\Sigma_{\rho,h_\rho}}(X)&=\lim_{k\to\infty}\frac{1}{s_k}[\F(\graph[\Phi_1 (\xi(s_k), s_k)+s_k])-\F(\Sigma_{\rho,h_\rho})]\\
&=\lim_{k\to\infty}\frac{1}{s_k}[\F(\Sigma_{\rho,h_\rho+s_k})-\F(\Sigma_{\rho,h_\rho})]\\
&\ge 0. 
\end{align*}

\textbf{Case 2b:}  The ratio $\frac{s_k}{\xi(s_k)}$ converges to zero as $k\to\infty$.\\

\noindent We will show that this is impossible.  The assumption implies that the quotients
$\frac{1}{\xi(s_k)}[\Phi_1 (\xi(s_k), s_k)+s_k]$ converge to a $\C^{2,\alpha}$ function $\bar{\varphi}$ that vanishes along $\Gamma_{\rho,h_\rho}$.  As in Case 2a, we conclude that 
\[L_{\Sigma_{\rho,h_\rho}} (\langle \bar{\varphi}\partial_\tau, \nu\rangle)=0.\]
Let $X=\bar{\varphi}\partial_\tau\in\C^{2,\alpha}$.  Its normal component $\varphi = \bar{\varphi}\langle \partial_\tau, \nu\rangle$ is nonnegative with zero boundary values. It follows from the strong maximum principle that either $\varphi$ is identically zero, or else $\nabla_\eta\varphi<0$ along $\Gamma_{\rho,h_\rho}$.  Assume the latter. Taking the limit of the equation $V(\Sigma_{\rho,h_\rho+s_k}) /\xi(s_k)=1$ we obtain $\int_{\Sigma_{\rho,h_\rho}} \varphi\,d\area=1$.  As in Case 2a, 
\begin{align*}
D\F|_{\Sigma_{\rho,h_\rho}}(X)&=\lim_{k\to\infty}\frac{1}{\xi(s_k)}[\F(\graph[\Phi_1 (\xi(s_k), s_k)+s_k])-\F(\Sigma_{\rho,h_\rho})]\\
&=\lim_{k\to\infty}\frac{1}{\xi(s_k)}[\F(\Sigma_{\rho,h_\rho+s_k})-\F(\Sigma_{\rho,h_\rho})]\\
&\ge 0. 
\end{align*}
On the other hand, only the second term in (\ref{three-terms}) contributes to $D\F|_{\Sigma_{\rho,h_\rho}}$ because $X$ vanishes along $\Gamma_{\rho,h_\rho}$. Thus  
\begin{align*}
D\F|_{\Sigma_{\rho,h_\rho}}(X)&=\int_{\Gamma_{\rho,h_\rho}} \langle Z, \langle D_\eta X, \nu\rangle \nu \rangle d \mathcal{H}^{n-2} \\
&=\int_{\Gamma_{\rho,h_\rho}} \phi  \nabla_\eta \varphi \,d\barea\\ &<0. 
\end{align*}
This contradiction shows that Case 2b cannot occur. \\

The proof of Lemma \ref{choose-h-rho} in the case where $\Sigma_{\rho, h_\rho}$ is connected is now finished. Assume now that $\Sigma_{\rho, h_\rho}$ is not connected. Let $\Sigma'_{\rho, h}$ denote the component of $\Sigma_{\rho, h}$ that contains the boundary.  Observe that $\Sigma'_{\rho,h}$ converges to $\Sigma'_{\rho,h_\rho}$ in $\C^{3,\alpha}$ as $h\to h_\rho$ because $h_\rho$ is not a jump height. Moreover, $\F(\Sigma'_{\rho,h})=\F(\Sigma_{\rho,h})$ for all $h\in[-\Lambda, \Lambda]$. From this it is easy to see that the above argument can be carried out  with $\Sigma'_{\rho,h_\rho}$ in place of $\Sigma_{\rho, h}$.

%%%%%%%%%%%%%%%%%%%%%%%%%%%%%%%%%%%%%%%%%%%%%%%%%
%%%%%%%%%%%%%%%%%%%%%%%%%%%%%%%%%%%%%%%%%%%%%%%%%
%%%%%%%%%%%%%%%%%%%%%%%%%%%%%%%%%%%%%%%%%%%%%%%%%

\section{The density theorem}\label{section-density}

Recall that $\pi = k -(\tr_g k)g$ and that 
\[ n \geq 3, \quad p>n, \quad q \in ((n-2)/2, n-2), \quad q_0 >0\quad \text{ and } \quad \alpha \in (0, 1 -n/p].
\]  
It will be convenient to express initial data in terms of $\pi$ rather than $k$.  Abusing terminology slightly, we will refer to $(M,g,\pi)$ as an initial data set in this section. We denote by $g_{\mathbb{E}}$ a fixed smooth symmetric $(0, 2)$-tensor that coincides with the Euclidean metric on $M\smallsetminus K \cong \mathbb{R}^n \smallsetminus B$ throughout this section.

\begin{thm} [Density theorem] \label{th:density-theorem-section}
Let $(M, g, \pi)$ be an $n$-dimensional asymptotically flat initial data set of type $(p,q,q_0,\alpha)$ such that the dominant energy condition $\mu \geq |J|_{g}$ holds. Let $\epsilon > 0$. There are asymptotically flat initial data $(\bar{g}, \bar{\pi})$ with harmonic asymptotics and of type $(p,q,q_0,\alpha)$ on $M$ such that
\[
	\| g -\bar{g} \|_{W^{2,p}_{-q} } < \epsilon, \quad \| \pi - \bar{ \pi } \|_{W^{1,p}_{-1-q}} < \epsilon, \quad | E-\bar{E}| < \epsilon, \quad |P-\bar{P}|< \epsilon,
\]
and such that the strict dominant energy condition 
\[
	\bar{\mu} > | \bar{J} |_{\bar{g}}
\]
holds. 
\end{thm}

\begin{remark}
If we assume appropriate higher regularity for $(M,g,\pi)$, then $(M, \bar{g},\bar{\pi})$ will have the same regularity.  This follows from applying Schauder estimates throughout the proof. Our argument shows that $(\bar g, \bar \pi)$ can be taken to be of type $(p, q, q_0', \alpha)$ for any given $q_0'>q_0$.   
\end{remark}

The proof of Theorem \ref{th:density-theorem-section} consists of two general constructions. In Section \ref{subse:dec} we deform $(g,\pi)$ to initial data $(\hat g, \hat \pi)$ such that $\hat{\mu} > (1+\gamma) | \hat{J}|_{\hat{g}}$ for some $\gamma >0$. 
In Section \ref{subse:harmonic-asymptotics} we apply a cut-off argument to perturb $(\hat g, \hat \pi)$ to harmonic asymptotics as in \cite[Theorem 1]{Corvino-Schoen:2006} and argue that we can preserve the strict dominant energy condition in the process. 

The ADM energy and linear momentum are continuous on the space of asymptotically flat initial data sets in the following sense. 

\begin{prop} [\emph{Cf.\ }{\cite[Proposition 2.4]{Huang:2011}}] \label{prop:cont} 
Let $(g, \pi)$ and $(\bar g, \bar \pi)$ be asymptotically flat initial data of type $(p,q,q_0,\alpha)$.  
Let $\epsilon>0$. There exists $\delta>0$ depending only on $\epsilon, n, p, q, q_0, \|(g - g_{\mathbb{E}}, \pi)\|_{W^{2,p}_{-q} \times W^{1,p}_{-q-1}}$, 
and $\|(\mu, J) - (\bar \mu, \bar J)\|_{{L^{1}_{-n-q_0/2}}}$  such that if 
\begin{align*}
	 \| g - \bar{g} \|_{W^{2,p}_{-q}} \le \delta \quad \mbox{and} \quad \| \pi - \bar{\pi} \|_{W^{1,p}_{-1-q}} \le \delta,
\end{align*}
then
\begin{align*}
	| E-\bar{E}| < \epsilon \quad \mbox{and}  \quad |P-\bar{P}|< \epsilon.
\end{align*}
 
\end{prop}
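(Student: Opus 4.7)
My plan is to rewrite $E-\bar E$ and $P-\bar P$ as a fixed-radius boundary integral plus a bulk integral via the classical divergence identities, and then estimate each piece separately. Setting $A^j := \sum_i(g_{ij,i}-g_{ii,j})$ in the asymptotic chart, a direct expansion of the scalar curvature around the Euclidean metric gives
\[
\partial_j A^j = R_g - Q_1\bigl(g-\delta,\,\partial(g-\delta),\,\partial^2(g-\delta)\bigr),
\]
where $Q_1$ is a universal polynomial expression, at least bilinear in its arguments. Integrating over $B_r\setminus B_{r_0}$, applying the divergence theorem, letting $r\to\infty$ (the tail is integrable since $q>(n-2)/2$), and subtracting the analogous identity for $(\bar g,\bar\pi)$ yields
\begin{equation*}
2(n-1)\omega_{n-1}(E-\bar E) = \int_{|x|=r_0}(A^j-\bar A^j)\nu_0^j\,d\area_0 \;+\; \int_{|x|>r_0}\bigl[(R_g-R_{\bar g})-(Q_1-\bar Q_1)\bigr]\,dx.
\end{equation*}
A completely parallel identity holds for $P_i-\bar P_i$, based on the expansion $\sum_j\partial_j\pi_{ij}=J_i-Q_2^{(i)}$ with $Q_2^{(i)}$ at least bilinear in $(g-\delta,\partial(g-\delta),\pi)$.

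The bulk integrand is then decomposed using the Hamiltonian constraint $R_g=2\mu+|k|_g^2-(\tr_g k)^2$, so that $R_g-R_{\bar g}=2(\mu-\bar\mu)+(\text{quadratic difference in }k,\bar k)$. The linear piece $2\int_{|x|>r_0}(\mu-\bar\mu)\,dx$ is bounded by $2r_0^{-q_0/2}\|\mu-\bar\mu\|_{W^{0,1}_{-n-q_0/2}}$ and can be made arbitrarily small by choosing $r_0$ large (depending on $\epsilon$, $q_0$, and on the norm in the hypothesis). Every quadratic term in $Q_1-\bar Q_1$ or in the $k$-remainder factors as a \emph{difference} factor, such as $\partial(g-\bar g)$ or $\pi-\bar\pi$ (pointwise bounded by $C\delta|x|^{-1-q}$ via the Sobolev embedding $W^{2,p}_{-q}\hookrightarrow\mathcal{C}^{1,\alpha}_{-q}$, valid since $p>n$), times a \emph{baseline} factor built from $g$, $\bar g$, $\pi$, or $\bar\pi$. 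Applying H\"older with the weights and using $2q+2>n$ to make the resulting integral finite, each such contribution is bounded by $C\delta$ where $C$ depends only on the baseline norm. Finally, the boundary integral at $r_0$ is bounded by the trace theorem as $C(r_0)\|g-\bar g\|_{W^{2,p}_{-q}}\le C(r_0)\delta$.

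The order of choices is essential: first fix $r_0$ large enough (depending on $\epsilon$, $q_0$, and the $W^{0,1}_{-n-q_0/2}$-norm hypothesis) so that the linear tails --- from $\mu-\bar\mu$ for the energy and from $J-\bar J$ for the momentum --- are each below $\epsilon/2$; then choose $\delta$ small enough (depending on $r_0$ and the baseline $W^{2,p}_{-q}\times W^{1,p}_{-1-q}$-norm) so that the boundary and quadratic bulk contributions are each below $\epsilon/2$. The principal technical obstacle is precisely this factorisation of every nonlinear term in the expansions of $R_g-R_{\bar g}$ and $(\Div_g\pi)_i-(\Div_{\bar g}\bar\pi)_i$ as (small difference)$\,\times\,$(bounded baseline), together with the accompanying weighted-H\"older bookkeeping; once this is carried out, the rest is a routine invocation of weighted Sobolev embedding, the trace theorem, and the ordered choice of $r_0$ and $\delta$ described above.
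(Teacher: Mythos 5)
Your proposal is correct and follows essentially the same route as the paper's proof: the divergence-theorem splitting of $E-\bar E$ (and $P-\bar P$) into a boundary integral at a fixed radius plus an exterior bulk term, smallness of the bulk tail via the $W^{0,1}_{-n-q_0/2}$ control of the constraint differences for $r_0$ large, smallness of the boundary term via the $W^{2,p}_{-q}\times W^{1,p}_{-1-q}$-closeness and Sobolev embedding, and the same order of choices (first $r_0$, then $\delta$). The only real difference is in the quadratic bulk terms: the paper bounds the quadratic remainders of $g$ and of $\bar g$ separately, using only that the tail of $|x|^{-2-2q}$ is small since $2q+2>n$, whereas you factor their difference as (small difference)$\times$(baseline) and bound it by $C\delta$ over the whole exterior; both work, and your variant has the mild advantage of exhibiting the dependence on the baseline norm of $(g,\pi)$ alone, as asserted in the statement.
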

The proof of this fact is now standard and goes back to  \cite[p.\ 50]{Schoen-Yau:1981-asymptotics} (for $E$ only) and \cite[p.\ 198]{Corvino-Schoen:2006} in the case of vacuum data. We include the argument for the sake of completeness.  
\begin{proof}
By the definition of $E$ and the divergence theorem, we have that 
\begin{align*}
	2(n-1)\omega_{n-1} {E} &= \lim_{r \rightarrow \infty} \int_{|x|=r} \sum_{i,j = 1}^n ({g}_{ij,i} -{g}_{ii,j} )\nu_0^j  \\
	&=  \int_{|x|=r} \sum_{i,j = 1}^n ({g}_{ij,i} -{g}_{ii,j} )  \nu_0^j\\
	&\quad + \int_{|x| \ge r} \sum_{i,j = 1}^n ({g}_{ij,ij} - {g}_{ii,jj})\, 
\end{align*}
for all $r$ sufficiently large, and similarly for $\bar E$. All integrals here are with respect to the Euclidean metric. Note that $\sum_{i,j=1}^n({g}_{ij,ij} - {g}_{ii,jj}) = 2 \mu + O(|x|^{-2 - 2q})$, because both sides differ from the scalar curvature by terms quadratic in $\partial_k g_{ij}$ and $\pi_{ij}$.  Since $(\mu - \bar \mu) \in {L^{1}_{-n-q_0/2}}$, it follows that there exists $r_0$ large and depending only on $\epsilon, n, p, q, q_0, \|\mu - \bar \mu\|_{{L^{1}_{-n-q_0/2}}}, \|(g - g_{\mathbb{E}}, \pi)\|_{W^{2, p}_{-q} \times W^{1, p}_{-q-1}}$, and $\|(\bar g-g_{\mathbb{E}}, \bar \pi)\|_{W^{2, p}_{-q} \times W^{1, p}_{-q-1}}$ such that
\[
	 \int_{|x| \ge r} \Big| \sum_{i,j = 1}^n  ({g}_{ij,ij} -{g}_{ii,jj}) - ( \bar{g}_{ij,ij} - \bar{g}_{ii,jj}) \Big| < (n-1) \omega_{n-1 }\epsilon
\] 
 for all $r \geq r_0$.
For the difference of the boundary integrals, note that
\begin{align}
	&\Big|\int_{|x|= r}  \sum_{i,j = 1}^n \left[  (g_{ij,i} - g_{ii,j}) - (\bar{g}_{ij,i} -\bar{g}_{ii,j})\right]  \nu_0^j \Big| \nonumber \\
	&\le  2 \sum_{i, j, k = 1}^n \int_{|x|=r}|\partial_k(g - \bar{g} )_{ij}|  \nonumber \\
	& \le 2 \omega_{n-1} r^{n-1} \sum_{i, j, k = 1}^n  \sup_{|x|=r} |\partial_k(g - \bar g)_{ij}|(x) \nonumber \\
	& \le 2 (\omega_{n-1} r^{n-1}) n^3 (r^{-1 - q}     C_p \|g - \bar g\|_{W^{2, p}_{-q}}) \label{eqn:aux_cont} 
\end{align}
where $C_p$ is the constant that governs the continuous embedding $W^{1, p}_{-1 -q} \subset \C^{1 - \frac{n}{p}}_{-1 - q}$. By choosing $\delta > \|g - \bar g\|_{W^{2, p}_{-q}}$ sufficiently small (depending on $r$) we can ensure that (\ref{eqn:aux_cont}) is less than $(n-1) \omega_{n-1} \epsilon$ so that $|E - \bar E| < \epsilon$. The argument for the linear momentum is similar.
\end{proof}

%%%%%%%%%%%%%%%%%%%%%%%%%%%%%%%%%%%%%%%%%%%%%%%%%%%%%%

\subsection{Perturbing to strict dominant energy condition} \label{subse:dec}

We define the constraint map
\begin{equation*}
	\Phi(g,\pi ) =(2\mu, J)=  \left(R_g  - | \pi |_g^2 + \tfrac{1}{n-1} (\tr_g \pi )^2,  \Div_g \pi \right).
\end{equation*} 
We will also use the modified Lie derivative  
\[
\mathcal{L}_g Y= L_Y g - (\Div_g Y) g
\]
of a vector field $Y$.

\begin{lem} \label{le:surjectivity-A}
Let $(g-g_{\mathbb{E}}, \pi) \in W^{2,p}_{-q} \times W^{1,p}_{-1-q}$. The linear map $A: W^{2,p}_{-q} \times W^{1,p}_{-1-q} \rightarrow L^{p}_{-2-q} $ given by 
\[
	A (h,w) = D\Phi |_{(g,\pi)} (h,w) - (0, \tfrac{1}{2} h^{j \ell} J_\ell)
\]
is surjective. 
\end{lem}

The proof is a small modification of the proof of surjectivity of the linearization $D \Phi|_{(g,\pi)}$  in \cite[Proposition 3.1]{Corvino-Schoen:2006}. For the convenience of the reader, we include the argument here.

\begin{proof}
In this proof we treat $\Phi$ as a map defined on the space of $(g, \pi)$ where $g$ is a $(0,2)$-tensor and $\pi$ is a $(2,0)$-tensor, and $(\Div_g \pi)^i := (\pi^{ij})_{;j}$. By direct computation, 
\begin{align*}
	D\Phi|_{(g,\pi)} (h , w) &=\Big( -\Delta_g(\mbox{tr}_g h) + \mbox{div}_g \mbox{div}_g (h) - h^{ij} R_{ij}  -2 h_{ij} \pi_\ell^i \pi^{j\ell}\\
	&\qquad - 2 \pi^j_k w^k_j  +\tfrac{2}{n-1}\mbox{tr}_g \pi (h_{ij} \pi^{ij} + \mbox{tr}_g w), \\
	&\qquad ( \mbox{div}_g w)^i - \frac{1}{2} \pi^{jk} h_{jk;\ell} g^{\ell i} + \pi^{jk} h^i_{j;k} +\tfrac{1}{2} \pi^{ij} (\mbox{tr}_g h)_{,j}\Big).
\end{align*}
Here all indices are raised or lowered with respect to $g$. The formula is well-known and can be found in, for example, \cite[pp.\ 999--1000]{Fischer-Marsden:1973} for $n=3$, but note that the negative divergence operator is used there. Let $(v, Z) \in W^{2,p}_{-q}$ where $v$ is a function and $Z$ is a vector field. Consider $h_{ij} = vg_{ij}$ and $w^{ij} = (\mathcal{L}_g Z)^{ij}$. Then
\begin{align} \label{de:Fredholm} 
	(v, Z) \mapsto A(h, w)
\end{align}
 is a Fredholm operator  from $W^{2,p}_{-q}$ to $L^{p}_{-2-q}$, \emph{cf}. \cite{Bartnik:1986}. It follows that the range of $A$, which contains the range of the operator in \eqref{de:Fredholm}, has finite codimension  in $L^{p}_{-2-q}$. In particular, the range of $A$ is closed. 

   Since $A$ has closed range, we can prove surjectivity of $A$ by showing that the kernel of the adjoint operator $A^*(\xi, Z) = D\Phi|_{(g,\pi)}^*(\xi, Z) - (\frac{1}{2} Z_iJ_j, 0)$ is trivial. One can compute the formal $L^2$-adjoint operator of $D\Phi|_{(g,\pi)}$ 
 \begin{align*}
 	&D\Phi|_{(g,\pi)} ^*(\xi, Z)\\
	 &= \left(  - (\Delta_g \xi) g_{ij} + \xi_{;ij} - \xi R_{ij} +\big( \tfrac{2}{n-1} (\mbox{tr}_g \pi) \pi_{ij} - 2 \pi_{ik} \pi^k_j \big) \xi\right.\\
	& \quad+ \tfrac{1}{2} \left( (L_Z\pi)_{ij} + (Z^k_{;k}) \pi_{ij}- Z_i \pi^k_{j;k} - Z_j\pi^k_{i;k} - Z_{k;m} \pi^{km} g_{ij} - Z_k \pi^{km}_{;m} g_{ij} \right), \\
	&\quad \left. -\tfrac{1}{2} (L_Z g)^{ij} + \big(\tfrac{2}{n-1} (\mbox{tr}_g \pi ) g^{ij}- 2 \pi^{ij}  \big) \xi\right).
 \end{align*}
Let $(\xi, Z)$ be in the dual space $L^{p^*}_{-n+2+q}$ such that $A^*(\xi, Z)  = (0, 0)$.  Taking the trace of the first component of $A^*(\xi, Z)=(0,0)$ gives an equation for $\Delta_g \xi$.  Using this equation, we can eliminate  the term $\Delta_g \xi$ from the system $A^*(\xi, Z)=(0,0)$ to obtain
 \begin{align} \label{eq:system}
\begin{split}
	0&=\xi_{;ij}  - \xi R_{ij}  + \tfrac{1}{n-1} (R_g + 2|\pi|_g^2 - \tfrac{2}{n-1} (\mbox{tr}_g \pi)^2) \xi g_{ij} \\
	&\quad +\big( \tfrac{2}{n-1} (\mbox{tr}_g \pi) \pi_{ij}-2 \pi_{ik} \pi^k_j  \big)\xi\\
	&\quad - \tfrac{1}{2(n-1)} \left(\mbox{tr}_g (L_Z \pi) + Z^k_{;k} \mbox{tr}_g \pi - 2 Z^i \pi^k_{i;k} - Z_{k;m} \pi^{km} - Z_k \pi^{km}_{;m} \right) g_{ij} \\
	 &\quad + \tfrac{1}{2} \left( (L_Z\pi)_{ij} + (Z^k_{;k}) \pi_{ij}- Z_i \pi^k_{j;k} - Z_j\pi^k_{i;k}\right) \\
	 &\quad - \tfrac{1}{2} Z_i J_j + \tfrac{1}{2(n-1)} Z^kJ_k  g_{ij}\\
	0&= \tfrac{1}{2} (L_Z g)^{ij} +\big(  2 \pi^{ij} - \tfrac{2}{n-1} (\mbox{tr}_g \pi ) g^{ij} \big) \xi.
\end{split}
\end{align}
We use a bootstrap argument to show that $(\xi, Z)$ vanishes to infinite order at infinity, i.e. that $|(\xi, Z)| \le C_N |x|^{-N}$ for any integer $N>1$. The initial decay of $\xi$ and $Z$ is on the order of $|x|^{-n+2+q}$. By \eqref{eq:system}, $\nabla^2 \xi$ is of order $|x|^{-n}$ and $L_Z g$ is of order $|x|^{-n+1}$. The following estimates for $n=3$ were proved in \cite[(10)]{Corvino-Schoen:2006} and can be generalized easily to any $n\ge 3$. For any weight $\tau>0$ and for radius $R >0$ large,
\begin{align*}
	& \int_{M\setminus B_R} (\xi |x|^{\tau})^2 |x|^{-n} \le C \int_{M\setminus B_R} (|\nabla^2 \xi| |x|^{2+\tau})^2 |x|^{-n} \\
	& \int_{M\setminus B_R} (|Z| |x|^{\tau})^2 |x|^{-n} \le C \int_{M\setminus B_R} (|L_Zg| |x|^{1+\tau})^2 |x|^{-n}.
\end{align*}
By letting $\tau < n-2$, we conclude that $(\xi, Z)$ is of order $|x|^{-\tau}$ for any $\tau < n-2$, so the decay rate of $(\xi, Z)$ is improved. We can then argue inductively that $(\xi, Z)$ vanishes to infinite order at infinity.

Finally, we take the trace of the first set of equations and  the divergence of the second set of equations of \eqref{eq:system}. The leading order term in the divergence of the second equation is 
\[
	(\mbox{div}_g (L_Zg))^i =  (Z^i_{;k} g^{kj} + Z^j_{;k} g^{ik})_{;j} = \Delta_g Z^i + g^{ik} Z^j_{;kj}.
\]
We replace the term $g^{ik} Z^j_{;kj}$  by terms involving only $\nabla Z, \nabla \xi,$ and  $\xi$, using the trace of the second component $A^*(\xi, Z)$. Hence we obtain a system of linear equations of the form 
\[
	\Delta_g (\xi, Z) = B(x)(\nabla \xi, \nabla Z) + C(x)( \xi, Z),
\] 
where $B, C$ are coefficient matrices. Note that although $B(x)$ and $C(x)$ are different from the coefficient matrices in \cite{Corvino-Schoen:2006}, they do have the same asymptotics. Using a Kelvin transform and a unique continuation argument as in  \cite[pp.\ 196--197]{Corvino-Schoen:2006},  one sees that $(\xi, Z)$ must vanish identically. This completes the proof of surjectivity.
\end{proof}

\begin{notation}
For better readability, we define
\[ s:=\frac{4}{n-2},\]
which will appear frequently in this section.
\end{notation}
For $(u-1, Y) \in W^{2,p}_{-q}$, let
\[
	\tilde{g}= u^s g \quad \mbox{and} \quad  \tilde{\pi} = u^{s/2} (\pi + \mathcal{L}_g Y).
\] 
Define the operator 
\begin{align*} \label{eq:constraint}
	T(u, Y) &= (2 \tilde{\mu} u^{s}, \tilde{J}  u^{s/2}  ), 
\end{align*}
where $(\tilde{\mu}, \tilde{J})$ are the mass and energy densities of $(\tilde{g}, \tilde{\pi})$. Explicitly, the components of $T$ are given by 
\begin{equation} \label{eq:uY}
\begin{split}
	2 \tilde{\mu} u^{s} &=- \tfrac{4(n-1)} {n-2}u^{-1} \Delta_g u + R_{g}+\tfrac{1}{n-1}  \left( \tr_g \pi + \tr_g \mathcal{L}_gY\right)^2   \\
	& \quad - \left( | \pi|_g^2 + 2 (\mathcal{L}_g Y)_{kl} \pi^{kl} + | \mathcal{L}_g Y|_g^2\right)\\
	\tilde{J}_j u^{s/2}&=  (\Div_g \mathcal{L}_g Y+ \Div_g \pi)_{j} + (n-1)\frac{s}{2} u^{-1} u_{,k} (\pi+\mathcal{L}_g Y)^k_j \\
	&\quad - \frac{s}{2} u^{-1} u_{,j} \tr_g(\pi + \mathcal{L}_gY), \qquad j = 1, 2, \ldots, n.
\end{split}
\end{equation}
Here, all indices are raised and lowered with respect to $g$.  

\begin{lem} \label{le:surjectivity}
Suppose that $(g-g_{\mathbb{E}}, \pi) \in W^{2,p}_{-q}\times W^{1,p}_{-1-q}$. For any $(f, V) \in L^{p}_{-2-q}$ there exist $(v, Z) \in W^{2,p}_{-q}$ and symmetric $(0,2)$-tensors $h, w$ in $\C_c^{3,\alpha}$ so that
\[
	DT|_{(1, 0)}(v,Z) + D\Phi|_{(g, \pi)}(h, w) = (f, V_j+\tfrac{1}{2} h^\ell_j J_\ell).
\]

If in addition $(f,V)\in \C^{0,\alpha}_{-n-q_0}$ for some $q_0>0$ and $\alpha \in (0, 1 - \frac{n}{p}]$, and $(g, \pi) \in \C^{2, \alpha}_{loc} \times \C^{1, \alpha}_{loc}$, then we have {$(v, Z)\in \C^{2,\alpha}_{2-n}$}. 

\end{lem}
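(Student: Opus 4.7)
The plan is to reduce surjectivity of the coupled operator in the statement to the surjectivity of $A$ established in Lemma \ref{le:surjectivity-A}, using a Fredholm analysis of the linearization of $T$ together with a cokernel-projection argument. First I would rewrite the target equation in the form
\[
L(v, Z) + A(h, w) = (f, V),
\]
where $L := DT|_{(1,0)}$ and $A$ is as in Lemma \ref{le:surjectivity-A}: a direct computation shows that the extra term $(0, \tfrac{1}{2} h_j^l J_l)$ on both sides of the lemma's equation cancels after substituting the definition of $A$, so the assertion becomes equivalent to solvability of the displayed equation with $(v, Z) \in W^{2,p}_{-q}$ and $(h, w) \in \mathcal{C}^{3,\alpha}_c$.

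Next I would compute $L$ explicitly by differentiating the formulas in (\ref{eq:uY}) at $(u, Y) = (1, 0)$. The scalar component yields $\tfrac{4(n-1)}{n-2}\Delta_g v$ plus terms that are lower order (zeroth) in $Z$, and the vector component yields the conformal Killing Laplacian $\Div_g \mathcal{L}_g Z$ plus first order terms in $v$. The principal symbol of $L$ is therefore block-diagonal elliptic with asymptotically Euclidean coefficients, so weighted Sobolev Fredholm theory (in the style of Bartnik, Lockhart, McOwen) applies: $L : W^{2,p}_{-q} \to W^{0,p}_{-2-q}$ has closed range of finite codimension. Let $N$ denote the finite-dimensional cokernel and $P$ the continuous projection $W^{0,p}_{-2-q} \to N$.

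The combination step now goes as follows. By Lemma \ref{le:surjectivity-A}, $A$ is surjective, so $P \circ A$ is a continuous surjection from $W^{2,p}_{-q} \times W^{1,p}_{-1-q}$ onto the finite-dimensional space $N$. Because $\mathcal{C}^{3,\alpha}_c$ is dense in $W^{2,p}_{-q}\times W^{1,p}_{-1-q}$ and any continuous linear surjection onto a finite-dimensional space remains surjective when restricted to a dense subspace, the restriction $P\circ A|_{\mathcal{C}^{3,\alpha}_c}$ is again surjective onto $N$. Given $(f, V)$, I would choose $(h, w)\in\mathcal{C}^{3,\alpha}_c$ with $P\circ A(h, w) = P(f, V)$; then $(f, V) - A(h, w) \in \ker P = \mathrm{Im}(L)$, so there exists $(v, Z) \in W^{2,p}_{-q}$ with $L(v, Z) = (f, V) - A(h, w)$. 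Translating back via the definition of $A$ gives the desired identity.

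For the regularity assertion, if additionally $(f, V)\in\mathcal{C}^{0,\alpha}_{-n-q_0}$ and $(g, \pi) \in \mathcal{C}^{2,\alpha}_{loc}\times \mathcal{C}^{1,\alpha}_{loc}$, then $(f, V) - A(h, w) \in \mathcal{C}^{0,\alpha}_{loc}$ and decays like $O(|x|^{-n-q_0})$ outside a compact set. Interior Schauder estimates give $(v, Z) \in \mathcal{C}^{2,\alpha}_{loc}$, and the asymptotic analysis of Meyers \cite{Meyers:1963} applied to $L$ (which is asymptotic to the Euclidean Laplacian and conformal Killing Laplacian) produces the leading asymptotic $(v, Z)(x) = O^{2,\alpha}(|x|^{2-n})$, placing $(v, Z)$ in $\mathcal{C}^{2,\alpha}_{2-n}$. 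The main obstacle I anticipate is Step 2, namely the verification that the weighted Fredholm machinery does apply at the specific weight $q > (n-2)/2$ used here — one must check that $-q$ is not an exceptional (indicial) value for the Laplacian, and that the conformal Killing Laplacian shares the relevant Fredholm properties; this is standard but needs to be stated carefully so that the cokernel-projection argument in Step 3 is rigorous.
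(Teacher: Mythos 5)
Your proposal is correct and follows essentially the same route as the paper: the paper also reduces the equation to $DT|_{(1,0)}(v,Z)+A(h,w)=(f,V)$, uses the weighted Fredholm theory (index zero, via Bartnik) for $DT|_{(1,0)}$ on the non-exceptional weight range together with the surjectivity of $A$ to pick finitely many compactly supported $\mathcal{C}^{3,\alpha}$ tensors whose $A$-images span a complement of the image (your cokernel-projection plus density argument is the same step in different words), and then obtains $(v,Z)\in\mathcal{C}^{2,\alpha}_{2-n}$ by viewing the system as $n+1$ Poisson equations with decaying, compactly supported Hölder sources and applying Meyers' theorem with weighted Schauder estimates and bootstrapping.
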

\begin{proof}
The linearization of $T$ at $(1,0)$ is 
\begin{align} \label{eq:linearization}
\begin{split}
	DT|_{(1, 0)} (v, Z) &= \bigg( -\tfrac{4(n-1)}{n-2} \Delta_g v - 4 Z_{k;\ell} \pi^{k\ell} + \tfrac{2}{n-1} \tr_g \pi \Div_g Z, \\
	& \qquad  \Div_g (\mathcal{L}_g Z)_j + \frac{(n-1)s}{2} v_{,k} \pi^k_j - \frac{s}{2} v_{,j}  \tr_g \pi\bigg) 
\end{split}
\end{align}
where all covariant derivatives and raising of indices are with respect to $g$. For $0<a<(n-2)$ and $p>n$, $DT|_{(1, 0)}: W^{2,p}_{-a} \rightarrow L^{p}_{-2-a}$ is Fredholm with index zero, \emph{cf.\ }\cite{Bartnik:1986}. Because the linear map $A$ defined in Lemma \ref{le:surjectivity-A}  is surjective onto $L^{p}_{-2-q}$ we can find $\C^{3,\alpha}$ compactly supported symmetric tensor fields $\{(h_k, w_k)\}_{k=1}^N$ whose images $A (h_k, w_k)$ span a subspace that complements the image of $DT|_{(1, 0)}$ in $L^{p}_{-2-q}$.
It follows that for every $(f, V) \in L^{p}_{-2-q}$ there exist $(v, Z) \in W^{2,p}_{-q}$ and $(h, w) \in \text{span}\{(h_k, w_k)\}_{k=1}^N$ so that
\begin{equation} \label{eqn:component}
	DT|_{(1, 0)}(v,Z) + D\Phi|_{(g, \pi)}(h, w) = (f, V_j+\tfrac{1}{2} h^\ell_j J_\ell).
\end{equation}
This completes the first part of the proof. 

Now suppose that $(f,V) \in \C^{0,\alpha}_{-n-q_0}$ for some $q_0>0$, and $(g, \pi) \in \C^{2, \alpha}_{loc} \times \C^{1, \alpha}_{loc}$.
We view the system (\ref{eqn:component}) as $(n+1)$ Poisson equations in $v, Z_i$ with a decaying nonhomogeneous term.   Note that the contribution $D \Phi|_{(g, \pi)} (h, w)$ to the nonhomogeneous term is a compactly supported H\"older function.  Using \cite[Theorem 2]{Meyers:1963}, which treats decay of solutions of the Poisson equation, and combining it with weighted Schauder estimates and a bootstrapping argument, we can conclude that $(v,Z)\in \C^{2,\alpha}_{2-n}$.  
 
\end{proof}

\begin{thm} \label{thm:strict-dec}
Let $(M, g, \pi)$ be an asymptotically flat initial data set of type $(p,q,q_0,\alpha)$. 
Assume that the dominant energy condition $\mu \ge | J |_{g}$ holds.  For any $\delta>0$, there exists asymptotically flat initial data $(\bar{g},\bar{\pi})$ of the same type such that 
\[
	\| g- \bar{g}\|_{W^{2,p}_{-q}} \le \delta \qquad \| \pi - \bar{\pi}\|_{W^{1,p}_{-1-q}} \le \delta,
\]
and for some $\gamma>0$ depending on $\delta$, 
\[
	\bar{\mu} > (1+\gamma) |\bar{J}|_{\bar{g}}.
\]
\end{thm}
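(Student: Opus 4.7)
\emph{Plan.} I would realize the theorem as a nonlinear inverse function theorem argument around $(g, \pi)$ whose linearization is precisely the operator shown to be surjective in Lemma \ref{le:surjectivity}. Consider the nonlinear constraint map
\begin{equation*}
\Psi(u, Y, h, w) := \Phi\bigl(u^s g + h,\; u^{s/2}(\pi + \mathcal{L}_g Y) + w\bigr),
\end{equation*}
defined on a neighborhood of $(1, 0, 0, 0)$ in $W^{2,p}_{-q} \times W^{2,p}_{-q} \times V$, where $V$ is the finite-dimensional subspace of $\mathcal{C}^{3, \alpha}_c$ spanned by the tensors $(h_k, w_k)$ constructed in Lemma \ref{le:surjectivity}. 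A direct calculation from \eqref{eq:uY} shows that the linearization of $\Psi$ at $(1, 0, 0, 0)$ is exactly the operator $(v, Z, h, w) \mapsto DT|_{(1, 0)}(v, Z) + D\Phi|_{(g, \pi)}(h, w) - (0, \tfrac{1}{2} h^l_j J_l)$, the correction term $\tfrac{1}{2} h^l_j J_l$ reflecting the raising of the momentum index using the perturbed metric $u^s g + h$ rather than $g$. Lemma \ref{le:surjectivity} thus provides a bounded right inverse for the linearization onto $W^{0, p}_{-2-q}$.

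\emph{Prescribing a target.} Fix constants $\gamma > 0$ and $\epsilon > 0$ with $(1+\gamma)(1-\epsilon) < 1$, and fix a strictly positive function $\phi_0 \in \mathcal{C}^{0, \alpha}_{-n-q_0}(M)$, for instance $\phi_0(x) = c(1+|x|^2)^{-(n + q_0)/2}$ in the asymptotic chart, with $c > 0$ a small parameter. Using the bounded right inverse above together with the quadratic structure of $\Psi$ (immediate from \eqref{eq:uY}), a standard inverse function theorem argument in $W^{2, p}_{-q} \times W^{2, p}_{-q} \times V$ yields, for all sufficiently small $c$ and $\epsilon$, a solution $(u, Y, h, w)$ close to $(1, 0, 0, 0)$ of
\begin{equation*}
\Psi(u, Y, h, w) = \bigl(-2(\mu + \phi_0),\; (1 - \epsilon) J\bigr),
\end{equation*}
with $(u-1, Y, h, w)$ small and its norms bounded linearly by $\|\phi_0\|_{W^{0, p}_{-2-q}} + \epsilon \|J\|_{W^{0, p}_{-2-q}}$. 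Setting $(\bar g, \bar\pi) := (u^s g + h, u^{s/2}(\pi + \mathcal{L}_g Y) + w)$, one reads off $\bar\mu = \mu + \phi_0$ and $\bar J = (1 - \epsilon) J$, and the $W^{2, p}_{-q} \times W^{1, p}_{-1-q}$ closeness to $(g, \pi)$ is controlled by $\delta$ upon choosing $c$ and $\epsilon$ small.

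\emph{Strict DEC and main obstacle.} Since $\bar g$ is close to $g$ in $\mathcal{C}^0$, $|\bar J|_{\bar g} \le (1-\epsilon)(1 + C\delta)|J|_g$, and the original DEC $\mu \ge |J|_g$ then gives
\begin{equation*}
\bar\mu - (1 + \gamma)|\bar J|_{\bar g} \;\ge\; \phi_0 + \bigl[\,1 - (1+\gamma)(1-\epsilon)(1 + C\delta)\,\bigr] |J|_g.
\end{equation*}
For $\delta$ small the bracket is a positive constant, and $\phi_0 > 0$ pointwise, so $\bar\mu > (1 + \gamma) |\bar J|_{\bar g}$ everywhere. The main technical obstacle is establishing the quadratic remainder estimate for $\Psi$ and running the inverse function theorem cleanly in the mixed setting where one factor is a weighted Sobolev space and the other is finite-dimensional; formula \eqref{eq:uY} reduces this to bookkeeping. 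The regularity claim $(\bar\mu, \bar J) \in \mathcal{C}^{0, \alpha}_{-n-q_0}$ is automatic since the prescribed target already lies in that space, while the improved regularity of $(u, Y, h, w)$, and hence of $(\bar g, \bar \pi)$, follows from the Schauder-bootstrap part of Lemma \ref{le:surjectivity}.
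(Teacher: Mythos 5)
Your overall strategy---solve the nonlinear constraint equation exactly for the target $(\mu+\phi_0,\,(1-\epsilon)J)$ and then read off the strict inequality pointwise---is viable, but the step carrying the analytic weight is justified incorrectly. The linearization of $\Psi(u,Y,h,w)=\Phi\bigl(u^sg+h,\;u^{s/2}(\pi+\mathcal{L}_gY)+w\bigr)$ at $(1,0,0,0)$ is \emph{not} $DT|_{(1,0)}(v,Z)+D\Phi|_{(g,\pi)}(h,w)-(0,\tfrac12h^l_jJ_l)$. The $(h,w)$-derivative of $\Psi$ at the base point is exactly $D\Phi|_{(g,\pi)}(h,w)$: no term $\tfrac12h^l_jJ_l$ appears, since $J=\Div_g\pi$ is a one-form whose components do not involve the inverse metric; your heuristic about raising the momentum index with the perturbed metric would in any case produce $-h^{jl}J_l$, not a factor $\tfrac12$. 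In the paper the term $\tfrac12 h^l_jJ_l$ is an \emph{artificial} modification (the operator $A$ of Lemma \ref{le:surjectivity-A}), chosen so that the first-order change of the norm $|J|_g$ under the deformation vanishes---a device needed only because the paper works with a first-order Taylor expansion rather than an exact solve. Moreover the $(v,Z)$-derivative of $\Psi$ is not $DT|_{(1,0)}(v,Z)$ either: $T(u,Y)=(-2\tilde\mu u^{s},\tilde Ju^{s/2})$ carries the conformal weights, so $D\Psi|_{(1,0,0,0)}(v,Z)$ differs from $DT|_{(1,0)}(v,Z)$ by the zeroth-order terms $\bigl(s[R_g-|\pi|_g^2+\tfrac1{n-1}(\tr_g\pi)^2]v,\,-\tfrac{s}{2}(\Div_g\pi)_jv\bigr)$; compare \eqref{eq:linearization} with the linearization displayed in the proof of Lemma \ref{le:Corvino-Schoen:2006}. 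Hence Lemma \ref{le:surjectivity} does not supply a right inverse for $D\Psi|_{(1,0,0,0)}$, and your inverse function theorem step, as written, rests on the wrong operator.

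The gap is repairable, because the isomorphism you actually need is the one the paper establishes for this very map in Lemma \ref{le:Corvino-Schoen:2006}: surjectivity of $D\Phi|_{(g,\pi)}$ together with Fredholmness of the $(v,Z)$-block yields a finite-dimensional space of compactly supported $(h,w)$ on which the restricted linearization is an isomorphism, and then your target $(\mu+\phi_0,(1-\epsilon)J)$ with $(1+\gamma)(1-\epsilon)(1+C\delta)\le1$ does give $\bar\mu-(1+\gamma)|\bar J|_{\bar g}\ge\phi_0>0$. You would also need a nonlinear Schauder bootstrap for the system \eqref{eq:uY} (as in Proposition \ref{pr:harmonic}) to recover $\mathcal{C}^{2,\alpha}_{loc}\times\mathcal{C}^{1,\alpha}_{loc}$ regularity of $(\bar g,\bar\pi)$; the bootstrap in Lemma \ref{le:surjectivity} is only for the linear operator. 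Note the contrast with the paper's actual proof, which avoids any nonlinear solve at this stage: it solves the linear equation $DT|_{(1,0)}(v,Z)+D\Phi|_{(g,\pi)}(h,w)=(f,\tfrac12h^l_jJ_l)$ with $f\simeq|x|^{-n-\min(1,q_0)}$, deforms along the explicit family $\bar g=(1+tv)^s(g+th)$, $\bar\pi=(1+tv)^{s/2}(\pi+t\mathcal{L}_gZ+tw)$, and Taylor-expands with remainder $O(t^2f)$; the $A$-trick forces $u^{s}|\bar J|_{\bar g}<|J|_g+\tfrac{tf}{4}$ while $u^{s}\bar\mu>\mu+\tfrac{tf}{2}$, and $\gamma^{-1}=1+\tfrac4t\sup_M\tfrac{|J|_g}{f}$ is finite because $f$ decays no faster than $|J|$. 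Your route, once repaired, buys exact control of $(\bar\mu,\bar J)$; the paper's route buys a purely first-order argument with no inverse function theorem in this step.
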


\begin{proof}
Choose a smooth positive function $f$ such that $f = |x|^{-n-\min(1,q_0)}$ near infinity, and let $(v, Z) \in \C^{2,\alpha}_{2-n}$ and $(h, w) \in \C^{3,\alpha}_c$ be a solution of the system
\[
	DT|_{(1, 0)}(v,Z) + D\Phi|_{(g, \pi)}(h, w) = (2f, \tfrac{1}{2} h^l_j J_l),
\]
whose existence is guaranteed by Lemma \ref{le:surjectivity} (with $V \equiv 0$).  Our goal is to show that for sufficiently small $t>0$, the formula
\begin{equation}\label{bar-family}
	\bar{g} = (1+tv)^s (g + t h) \quad \mbox{and} \quad \bar{\pi} = (1+ t v)^{s/2} (\pi + t \mathcal{L}_g  Z + tw) 
\end{equation}
gives the desired initial data $(\bar{g},\bar{\pi})$ in the statement of the theorem.  Since we clearly have $\| g- \bar{g}\|_{W^{2,p}_{-q}} \le \delta$ and  $\| \pi - \bar{\pi}\|_{W^{1,p}_{-1-q}} \le \delta$ for small enough $t$, it suffices to show that  $\bar{\mu} > (1+\gamma) |\bar{J}|_{\bar{g}}$ for some $\gamma>0$ that depends on $t$.

In the following we denote  $u=1+tv$ and define 
\[
	\Phi_1 (1+tv, tZ, th, tw) = (2 \bar{\mu} u^{s}, \bar{J} u^{s/2}).
\]
By Taylor expansion, 
\begin{align} 
	&\Phi_1 (1+tv, tZ, th, tw) \notag \\
	&= \Phi_1 (1,0,0,0) + tD\Phi_1|_{(1,0,0,0)} (v,Z,h,w) + \mathcal{R} \notag\\
	&= (2\mu, J)+ t DT |_{(1,0)}(v,Z)+ t D\Phi |_{(g,\pi)} (h,w) +  \mathcal{R}\notag \\
	&=(2\mu, J) + t (2f, \tfrac{1}{2} h^k_i J_k)  +  \mathcal{R}, \label{eq:taylor}
\end{align}
where there is a minor abuse of notation in the last line, and the remainder term $\mathcal{R} = \mathcal{R}(x, t)$ has the form
\[
	\mathcal{R} (x, t) = t \int_0^1 \left[ D\Phi_1 |_{(1,0,0,0)+rt(v, Z, h, w)} - D \Phi_1|_{(1, 0, 0, 0)}\right] \, (v, Z, h, w) \, dr.
\]
\begin{claim} We have that  
\[
	|\mathcal{R}(x, t)| \le  C t^2 (1+  |x|)^{2-2n} = O(t^2 f).
\]
where $C$ is a constant that does not depend on $x$ or $t$.
\end{claim} 
\begin{proof}[Proof of claim]
Clearly, $\mathcal{R}(x, t) = O(t^2)$, so it suffices to work outside a large ball $B$ that contains the support of $(h, w)$. Note that when $x \in M \smallsetminus B$ we have that $D \Phi_1|_{(1, 0, 0, 0) + rt (v, Z, h, w)} (v, Z, h, w) = DT_{(1, 0) + rt (v, Z)} (v, Z)$ and hence 
\[
	\mathcal{R} (x, t)= t \int_0^1 \left[ D T |_{(1, 0) +rt (v, Z)} - D T|_{(1, 0)}\right]  \, (v, Z)\, dr.
\]
From (\ref{eq:uY}) we see that the linearization of $T$ at $(u,Y)$ is given by
\begin{align*}  
	&D T |_{(u, Y)} (v, Z) \notag\\
	&= \bigg( \tfrac{4(n-1)}{n-2} (u^{-2} v \Delta_g u -u^{-1} \Delta_g v) \notag \\
	&\qquad + \tfrac{2}{n-1} (\tr_g \pi + \tr_g \mathcal{L}_gY) \tr_g \mathcal{L}_gZ\notag  - 2 (\pi +\mathcal{L}_gY )^{kl} (\mathcal{L}_g Z)_{kl}, \notag \\
	& \qquad \Div_g (\mathcal{L}_g Z)_{j} + (n-1) \frac{s}{2} ( vu^{-1})_{,k} (\pi + \mathcal{L}_g Y)^k_j + (n-1) \frac{s}{2} u^{-1}u_{,k} (\mathcal{L}_g Z)^k_j \notag \\
	&\qquad - \frac{s}{2} (vu^{-1})_{,j} \tr_g (\pi+\mathcal{L}_gY) - \frac{s}{2} u^{-1} u_{,j} \tr_g \mathcal{L}_g Z \bigg).
\end{align*}
The claim follows from substituting $(1, 0) + rt (v, Z)$ and $(1, 0)$ for $(u, Y)$, using that $(v, Z) \in \C^{2, \alpha}_{2 - n}$, and estimating each term in an obvious way.
\end{proof}

From \eqref{eq:taylor} we have 
\[
	u^s \bar{\mu} = \mu + t f + O(t^2 f) \quad \mbox{and} \quad u^{s/2}\bar{J}_i = J_i +t \tfrac{1}{2} h^k_i J_k+ O(t^2 f). 
\]
In particular, for sufficiently small $t>0$, we have
\begin{align} \label{eq:mu}
	u^s \bar{\mu}  >  \mu + \frac{t}{2} f  \quad \mbox{everywhere on } M. 
\end{align}
We claim that $u^{s} |\bar J|_{\bar g} < |J|_g + \frac{tf}{4}$ for $t>0$ sufficiently small.  Choose $f_1$ to be a smooth nonnegative function whose support is larger than that of $h$. Then 
\[ \bar{g}^{ij} = u^{-s}(g^{ij} -th^{ij}+O(t^2 f_1)).\]
Therefore
\begin{align*} 
(u^{s} |\bar J|_{\bar g})^2 
&= u^{2s} \bar{g}^{ij} \bar{J}_i\bar{J}_j\\
&= (g^{ij} -th^{ij}+O(t^2 f_1))( J_i +t \tfrac{1}{2} h^k_i J_k+ O(t^2 f)) ( J_j +t \tfrac{1}{2} h^k_j J_k+ O(t^2 f))\\
&= g^{ij}J_i J_j + t (-h^{ij} J_i J_j + \tfrac{1}{2}  g^{ij} h^k_i J_k J_j + \tfrac{1}{2} g^{ij}J_i  h^k_j J_k)\\
&\quad
+ O(t^2 |J|^2 f_1 +  t^2 |J|f+ t^3 f_1+ t^4 f^2) \\ 
&= |J|_g^2
+ O( t^2 |J|f+ t^3 f^2) \\ 
&= \left(|J|_g + \frac{tf}{4}\right)^2 - \frac{tf}{2}|J|_g - \frac{t^2 f^2}{16}  + O( t^2 |J|f+ t^3 f^2) \\ 
&<\left(|J|_g + \frac{tf}{4}\right)^2,
\end{align*}
where we choose $t>0$ to be sufficiently small in the last line, proving the claim.  Observe that the computation above explains the motivation behind the definition of $A$.  The $\frac{1}{2}h^l_j J_l$ term in $A$ is chosen specifically so that the first order change in $|J|_g$ under the deformation vanishes.

Now fix $t>0$ small enough so that $u^s \bar \mu > \mu + \frac{tf }{2} $ and $u^s |\bar J|_{\bar g}  < |J|_g + \frac{tf}{4}$. Our assumptions imply that  $\sup_{ M} \frac{|J|_g}{f} < \infty$. 
For $x \in M$ such that $|\bar J|_{\bar g} (x) \neq 0$ it follows that 
\begin{eqnarray*}
\frac{\bar \mu}{ |\bar J|_{\bar g}} = \frac {u^s \bar \mu}{u^s |\bar J|_{\bar g}} > \frac{\mu + t f /2}{|J|_g + t f /4} \geq \frac{|J|_g + tf/2}{|J|_g + tf/4} \geq 1 + \gamma
\end{eqnarray*} where $\gamma^{-1} := 1 + \frac{4}{t} \sup_{M} \frac{|J|_g}{f}$. Since $\bar \mu >0$ we conclude that $\bar \mu > (1 + \gamma) |\bar J|_{\bar g}$ on $M$, as desired.  Finally, note that $\gamma$ depends only on $t$ and $\sup_M \frac{|J|_g }{ f}$.
\end{proof}

\begin{remark} The assertion of Lemma 1 in \cite{Schoen-Yau:1981-pmt2} is similar to but weaker than that of Theorem \ref{thm:strict-dec}. We also note that the proof of this lemma contains an error. 
\end{remark}

%%%%%%%%%%%%%%%%%%%%%%%%%%%%%%%%%%%%%%%%%%%%%%%%%%%%%%%%%%%%%%%%%%%%%%%%%%%%%
%%%%%%%%%%%%%%%%%%%%%%%%%%%%%%%%%%%%%%%%%%%%%%%%%%%%%%%%%%%%%%%%%%%%%%%%%%%%%
%%%%%%%%%%%%%%%%%%%%%%%%%%%%%%%%%%%%%%%%%%%%%%%%%%%%%%%%%%%%%%%%%%%%%%%%%%%%%

\subsection{Harmonic asymptotics} \label{subse:harmonic-asymptotics}

We first show that any asymptotically flat initial data set can be slightly perturbed so that
\begin{align*}
	g = u^{s} g_{\mathbb{E}}, \quad \pi = u^{s/2} \mathcal{L}_{g_{\mathbb{E}}} Y
\end{align*} 
outside a compact set, for some choice of $u$ and $Y$, meanwhile prescribing any constraints $(\mu, J)$ that are close to the original ones. We then show that $(g,\pi)$ has harmonic asymptotics if the imposed constraints decay fast enough.

The first result is a modification of \cite[Theorem 1]{Corvino-Schoen:2006}, where the special case of three-dimensional vacuum initial data was treated. 

\begin{lem} \label{le:Corvino-Schoen:2006}
Suppose $(g -g_{\mathbb{E}}, \pi)\in W^{2,p}_{-q}\times W^{1,p}_{-1-q} $. There exist $C_0, \delta_0 > 0$ so that given $(\bar{\mu},\bar{J}) \in L^{p}_{- 2 - q}$ with $\|(\bar{\mu}-\mu,\bar{J}- J)\|_{L^{p}_{-2-q}} \le \delta \le \delta_0$, there is an  initial data set $\left(\bar{g}, \bar{\pi} \right)$ such that
\begin{align} 
	\bar{g} = u^s g_{\mathbb{E}}, \quad \bar{\pi} = u^{s/2} \mathcal{L}_{g_{\mathbb{E}}} Y
\end{align}
outside a compact set for some $(u-1, Y)\in W^{2,p}_{-q}$, and such that the mass and current densities of $(\bar{g},\bar{\pi})$ are $(\bar{\mu}, \bar{J})$.  Moreover, 
\[
	\| \bar{g} - g\|_{W^{2,p}_{-q}} \leq C_0 \delta, \qquad \| \bar{\pi} -\pi \|_{W^{1,p}_{-1-q}} \leq C_0 \delta.
\] 
\end{lem}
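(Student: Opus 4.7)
The plan is to combine a cutoff of $(g, \pi)$ outside a large ball with an application of the inverse function theorem, adapted from \cite[Theorem~1]{Corvino-Schoen:2006}. After cutting off, the (new) background is exactly flat outside $B_{2R}$, so any conformal-plus-Lie deformation of the form $(g, \pi) \mapsto (u^s g, u^{s/2}(\pi + \mathcal{L}_g Y))$ applied to this background automatically produces data of the required form $u^s \delta$, $u^{s/2} \mathcal{L}_\delta Y$ outside $B_{2R}$; the task then reduces to solving a nonlinear equation for $(u-1, Y)$ in a weighted Sobolev space.

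Concretely, I would fix a smooth radial cutoff $\chi_R$ with $\chi_R \equiv 1$ on $B_R$ and $\chi_R \equiv 0$ outside $B_{2R}$, and set $g_R := \chi_R g + (1-\chi_R)\delta$ and $\pi_R := \chi_R \pi$ in the asymptotic chart, leaving everything unchanged on the compact part. Because $(g - \delta, \pi) \in W^{2,p}_{-q} \times W^{1,p}_{-1-q}$ we have $\|g - g_R\|_{W^{2,p}_{-q}} + \|\pi - \pi_R\|_{W^{1,p}_{-1-q}} = o_R(1)$, and setting $(\mu_R, J_R) := \Phi(g_R, \pi_R)$, also $\|(\mu_R - \mu, J_R - J)\|_{W^{0,p}_{-2-q}} = o_R(1)$. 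Let $T_R$ be the operator~(\ref{eq:constraint}) built from the background $(g_R, \pi_R)$; any solution $(u-1, Y) \in W^{2,p}_{-q}$ of $T_R(u, Y) = (-2\bar{\mu} u^s, \bar{J} u^{s/2})$ yields $\bar g := u^s g_R$ and $\bar\pi := u^{s/2}(\pi_R + \mathcal{L}_{g_R} Y)$ with constraints exactly $(\bar\mu, \bar J)$ and the required asymptotic form outside $B_{2R}$.

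To solve this nonlinear equation near $(u, Y) = (1, 0)$ I would apply the inverse function theorem, provided a uniform-in-$R$ right inverse for $DT_R|_{(1, 0)} : W^{2, p}_{-q} \to W^{0, p}_{-2-q}$ is available. Since Lemma~\ref{le:surjectivity} only gives surjectivity after augmenting with compactly supported perturbations $(h, w)$ of the background, I would instead work with the augmented operator $\mathcal{T}_R(u, Y, h, w) := \Phi\bigl(u^s(g_R + h),\, u^{s/2}(\pi_R + w + \mathcal{L}_{g_R+h} Y)\bigr)$ (scaled componentwise as in~(\ref{eq:constraint})), where $(h, w)$ ranges over a fixed finite-dimensional space of compactly supported $\mathcal{C}^{3,\alpha}$ tensor pairs whose $A$-images (see Lemma~\ref{le:surjectivity-A}) complement the image of $DT_R|_{(1,0)}$. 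With this augmentation the linearization of $\mathcal{T}_R$ at $(1, 0, 0, 0)$ is surjective, and the inverse function theorem then produces a solution with $\|(u-1, Y)\|_{W^{2,p}_{-q}} + \|(h, w)\| \le C \|(\mu_R, J_R) - (\bar\mu, \bar J)\|_{W^{0, p}_{-2-q}} \le C(\delta + o_R(1))$. The compact support of $(h, w)$ preserves the asymptotic form of the output; choosing $R = R(\delta)$ so that $o_R(1) \le \delta$ and combining with the cutoff estimate gives $\|\bar g - g\|_{W^{2, p}_{-q}} + \|\bar\pi - \pi\|_{W^{1, p}_{-1-q}} \le C_0 \delta$ as required.

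The main obstacle is obtaining $R$-uniform bounds on the right inverse of the augmented linearization. The principal part of~(\ref{eq:linearization}) is the coupled Laplace/Killing system $(v, Z) \mapsto (\tfrac{4(n-1)}{n-2} \Delta_{g_R} v,\ \Div_{g_R} \mathcal{L}_{g_R} Z)$, whose flat-Euclidean version is an isomorphism between the weighted Sobolev spaces at hand for $p > n$ and $0 < q < n - 2$; the lower-order cross terms involving $\pi_R$ and derivatives of $g_R$ are compact perturbations. Using the norm convergence $(g_R, \pi_R) \to (g, \pi)$ as $R \to \infty$, Fredholm stability gives the required uniform right-inverse bound for $R$ large, with the finite-dimensional $(h, w)$-augmentation absorbing any persistent cokernel obstruction, exactly as in the proof of Lemma~\ref{le:surjectivity}.
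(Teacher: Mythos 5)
Your proposal follows essentially the same route as the paper's proof: cut $(g,\pi)$ off to exactly flat/vacuum data outside a large ball, augment the conformal-plus-vector-field map with a fixed finite-dimensional space of compactly supported corrections $(h,w)$, and apply the inverse function theorem with bounds uniform in the cutoff parameter --- the paper implements your ``Fredholm stability'' step by fixing the complementing spaces $K_1, K_2$ for the limit operator $DT_{(g,\pi)}|_{(1,0)}$ and using convergence of the linearizations as the cutoff radius tends to infinity. One small correction: the $(h,w)$-derivative of your augmented map at $(1,0,0,0)$ is $D\Phi|_{(g_R,\pi_R)}(h,w)$, not $A(h,w)$, so the complementing family should be chosen via the surjectivity of $D\Phi|_{(g,\pi)}$ (as the paper does), rather than via the $A$-images of Lemma \ref{le:surjectivity-A}, whose extra term $(0,\tfrac12 h^l_j J_l)$ could in principle spoil the complementing property.
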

\begin{proof}
For $\lambda \geq 1$ large define the cut-off initial data 
\[
	\hat{g}_\lambda = \chi_\lambda g + (1 - \chi_\lambda) g_{\mathbb{E}}, \qquad \hat{\pi}_\lambda = \chi_\lambda \pi
\]
where $\chi_\lambda (x) = \chi(x/\lambda) $ and $\chi$ is a smooth cut-off function on $\mathbb{R}^n$ that is $1$ on $\{ |x| \le 1\}$ and $0$ on $\{ |x| \ge 2\}$. Note that $\| (\hat g_\lambda - g, \hat \pi_\lambda - \pi)\|_{W^{2,p}_{-q}\times W^{1,p}_{-1-q} } \to 0$ as $\lambda \to \infty$. 

In the following, we suppress the subscript $\lambda$ when the context is clear. Define
\begin{align*}
	\tilde{g} = u^{s} \hat{g},  \qquad \tilde{\pi} = u^{s/2} (\hat{\pi} + \mathcal{L}_{\hat g} Y),
\end{align*}
where $(u - 1, Y) \in W^{2,p}_{-q}$.  Let $(\tilde{\mu}, \tilde{J})$ be the mass and current densities of $(\tilde{g}, \tilde{\pi})$. Define the map $T_{(\hat{g}, \hat{\pi})}(u,Y)= (2 \tilde{\mu}, \tilde{J} )$ from $(W^{2,p}_{-q}+1)\times W^{2,p}_{-q} \rightarrow L^{p}_{-2-q}$. The linearization of $T_{(\hat{g}, \hat{\pi})}$ at $(1,0)$ is 
\begin{align*}
	&DT_{(\hat{g}, \hat{\pi})}|_{(1, 0)}(v, Z) \\
	&= \bigg(-\tfrac{4(n-1)} {n-2}\Delta_{\hat{g}} v - s \big[R_{\hat{g}} - | \hat{\pi}|_{\hat{g}}^2 + \tfrac{1}{n-1} (\tr_{\hat{g}} \hat{\pi})^2\big] v \\
	&\qquad - 4 Z_{k;l} \hat{\pi}^{kl} + \tfrac{2}{n-1} \tr_{\hat{g}} \hat{\pi} \,\Div_{\hat{g}} Z, \\
	&\qquad \Div_{\hat{g}} (\mathcal{L}_{\hat{g}}Z)_j+ (n-1)\frac{s}{2} v_{,k} \hat{\pi}^k_j - \frac{s}{2} v_{,j} \tr_{\hat{g}} \hat{\pi} - \frac{s}{2} (\Div_{\hat{g}} \hat{\pi} )_j v \bigg),
\end{align*}
where indices are raised and covariant derivatives are taken with respect to $\hat{g}$. The map $T_{(g, \pi)} : (W^{2,p}_{-q}+1)\times W^{2,p}_{-q} \rightarrow L^{p}_{-2-q}$ is defined analogously. Because $q \in ((n-2)/2, n-2)$ and $p>n$, $DT_{(\hat{g}, \hat{\pi})}|_{(1, 0)}$ and $D T_{(g, \pi)}|_{(1, 0)}$ are Fredholm operators of index $0$ for $\lambda$ sufficiently large (see \cite{Bartnik:1986}). %Change of range $q$. 

Let $K_1$ be a complementing subspace for the kernel of $D T_{(g, \pi)}|_{(1, 0)}$ in $W^{2, p}_{-q} \times W^{1, p}_{-2-q}$. Since the linearization $D\Phi|_{(g, \pi)}: W^{2,p}_{-q}\times W^{1,p}_{-1-q} \to L^{p}_{-2-q}$ is surjective (see  \cite{Corvino-Schoen:2006} and  Lemma \ref{le:surjectivity-A}) and because $DT_{(g, \pi)}|_{(1, 0)}$ is Fredholm, we can find $\C^{3,\alpha}$ compactly supported symmetric $(0, 2)$-tensors $\{(h_k, w_k)\}_{k=1}^N$ whose images $D\Phi|_{(g, \pi)}(h_k, w_k)$ form a basis for a complementing subspace of the image of $DT_{(g, \pi)}|_{(1, 0)}$ in $L^{p}_{-2-q}$. Let $K_2 = \mbox{span} \{ (h_k, w_k)\}_{k=1}^N$. We define the maps $\overline{T}_{(\hat{g}, \hat{\pi})}, \overline{T}_{(g, \pi)}: K_1  \times K_2 \rightarrow L^{p}_{-2-q}$ by 
\[
	\overline{T}_{(\hat{g}, \hat{\pi})}(u,Y, h,w)  = \Phi( u^s \hat{g}+h, u^{s/2} (\hat{\pi}+\mathcal{L}_{\hat{g}} Y )+w)
\] 
and 
\[
	\overline{T}_{({g},{\pi})}(u,Y, h,w)  = \Phi( u^s {g}+h, u^{s/2} ({\pi}+\mathcal{L}_{g} Y )+w). 
\] 

The maps $\overline{T}_{(\hat g, \hat \pi)}$ and $\overline{T}_{(g, \pi)}$ are continuously differentiable. Using that $(\hat g, \hat \pi)$ converges to $(g, \pi)$ in $W^{2, p}_{-q} \times W^{1, p}_{-q-1}$ as $\lambda \to \infty$ it is easy to see that $D \overline{T}_{(\hat g, \hat \pi)}|_{(u, Y, h, w)}$ converges to  $D \overline{T}_{(g, \pi)}|_{(u, Y, h, w)}$ as $\lambda \to \infty$ locally uniformly in $(u, Y, h, w) \in K_1 \times K_2$ in the strong operator topology. Observe that $D \overline T_{(g, \pi)}|_{(1, 0, 0, 0)}$ is an isomorphism by construction. We conclude from the inverse function theorem that there exists $\delta_0>0$ such that for all $\lambda \geq 1$ sufficiently large, $\overline{T}_{(g, \pi)}$ restricts to a $\C^1$ diffeomorphism defined on an open neighborhood of $(1, 0, 0, 0)$ (independent of $\lambda \geq 1$) in $K_1 \times K_2$ and onto an open neighborhood containing the $L^{p}_{-2-q}$ ball of radius $2 \delta_0$ centered at $(2 \hat \mu, \hat J)$. 

Using that $\overline{T}_{(g, \pi)}(1,0, 0,0)=(2\mu,J)$ and that $\|(\mu, J) - (\hat \mu, \hat J)\|_{L^{p}_{-2-q}} \to 0$ as $\lambda \to \infty$ we see that $\|(\bar \mu, \bar J) - (\hat \mu, \hat J)\|_{{L^{p}_{-2-q}}} < 2 \delta$ provided that $\|(\bar \mu, \bar J) - (\mu, J)\|_{L^{p}_{-2-q}} < \delta$ and $\lambda \geq 1$ is sufficiently large. Hence if $\delta \in (0, \delta_0)$ there exist $(u-1, Y) \in W^{2, p}_{-q}$ and $\C^{3,\alpha}$ compactly supported symmetric $(0, 2)$-tensors $(h, w)$ such that $\overline{T}_{(\hat g, \hat \pi)} (u, Y, h, w) = (2\bar \mu, \bar J)$ and such that
\[
	\|(u-1,Y)\|_{W^{2,p}_{-q}}\le C_1 \delta, \qquad \| (h,w)\|_{W^{2,p}_{-q} \times W^{1,p}_{-1-q}} \le C_1 \delta
\]
for some constant $C_1>0$ that only depends on $(g,\pi)$. 
\end{proof}

The following proposition is straightforward except for a subtlety that arises when $n=3$.

\begin{prop} \label{pr:harmonic}
Suppose that $(M, g, \pi)$ is an asymptotically flat initial data set of type $(p,q,q_0,\alpha)$  with $q_0 > 1$ (rather than just $q_0 > 0$) and such that
\begin{align} \label{eq:harmonic}
	g = u^{s} g_{\mathbb{E}}, \quad \pi = u^{s/2} \mathcal{L}_{g_{\mathbb{E}}} Y ,
\end{align} 
outside a large ball $B$ for some $(u-1, Y) \in W^{2,p}_{-q}$.  Then $(g,\pi)$ has harmonic asymptotics in the sense of Section \ref{section-definitions}.
\end{prop}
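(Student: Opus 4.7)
The plan is to interpret the two constraint equations, specialized to the form $g = u^s \delta$ and $\pi = u^{s/2} \mathcal{L}_\delta Y$ holding on $\mathbb{R}^n \smallsetminus B$, as a semilinear elliptic system for $(u-1, Y)$, and then to apply Meyers' asymptotic expansion theorem \cite{Meyers:1963} to extract the $|x|^{2-n}$ leading terms with the sharp $O^{2,\alpha}(|x|^{1-n})$ remainder demanded by the definition of harmonic asymptotics.

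First, a direct computation using the conformal change formula \eqref{scalar-curvature} and the identity $(\Div_\delta \mathcal{L}_\delta Y)_j = \Delta_\delta Y_j$ (which holds in Euclidean space because the extra $\partial_i \partial_j Y^j - \partial_i \partial_k Y^k$ terms cancel) shows that the constraint equations $-2\mu = \Phi_1(g,\pi)$ and $J_j = \Phi_2(g,\pi)_j$ reduce, outside $B$, to
\begin{align*}
\tfrac{4(n-1)}{n-2}\Delta_\delta u &= -2 u^{1+s}\mu + Q_1(u, DY),\\
\Delta_\delta Y_j &= u^{s/2} J_j + Q_2(u, Du, Y, DY),
\end{align*}
where $Q_1$ and $Q_2$ are smooth nonlinear expressions that are at least quadratic in the derivatives of $u$ and $Y$. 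Both equations have the flat Laplacian as principal part, so standard weighted elliptic theory applies.

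Next, I would bootstrap decay and regularity using weighted Schauder estimates followed by Meyers' theorem. The embedding $W^{2,p}_{-q} \hookrightarrow \mathcal{C}^{1,\beta}_{-q}$, valid since $p>n$, yields $|Du|, |DY| = O(|x|^{-q-1})$, so $Q_1, Q_2 = O(|x|^{-2q-2})$; since $q > (n-2)/2$ this is strictly faster than $|x|^{-n}$. Combined with $\mu, J = O(|x|^{-n-q_0})$, each right-hand side lies in $\mathcal{C}^{0,\alpha}_{-n-\tau}$ for some $\tau > 0$. Weighted Schauder estimates then upgrade $u-1$ and $Y$ to $\mathcal{C}^{2,\alpha}_{-q}$, and a first pass of Meyers' theorem applied to the Poisson equations produces constants $a$ and $b_j$ with
\[
u = 1 + a|x|^{2-n} + O^{2,\alpha}(|x|^{2-n-\tau'}), \qquad Y_j = b_j|x|^{2-n} + O^{2,\alpha}(|x|^{2-n-\tau'})
\]
for some $\tau' > 0$.

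The crux, and the reason for requiring $q_0 > 1$ rather than merely $q_0 > 0$, is closing the bootstrap at the sharp remainder $O^{2,\alpha}(|x|^{1-n})$. Substituting the expansions above back into $Q_1, Q_2$ shows that the nonlinear contributions to the right-hand sides now decay at least as $O(|x|^{2(1-n)})$, which beats $|x|^{-n-1}$ for every $n \geq 3$, while the constraint contributions remain $O(|x|^{-n-q_0})$. When $q_0 > 1$ the combined right-hand side decays strictly faster than $|x|^{-n-1}$, and a second application of Meyers' theorem then rules out any logarithmic correction — explicitly, the borderline $(\log|x|)|x|^{-2}$ term that would otherwise obstruct the expansion in dimension $n=3$ — and delivers exactly the $O^{2,\alpha}(|x|^{1-n})$ remainder. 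I expect this last step to be the main hurdle, as it amounts to the integrability of the dipole moment $\int y_j f(y)\, dy$ in the Newtonian expansion of the Green's function, which requires strictly more decay of the source than $|x|^{-n-1}$. The argument for $Y_j$ runs coordinatewise in parallel, completing the verification that $(g,\pi)$ has harmonic asymptotics.
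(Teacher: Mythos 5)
Your setup and your first bootstrap pass coincide with the paper's proof: rewrite the constraints outside $B$ as Poisson equations for $(u,Y)$ with $\Delta_\delta$ as principal part, use the Sobolev embedding to get $(u-1,Y)=O^{1,\alpha}(|x|^{-q})$, note the quadratic terms are $O(|x|^{-2q-2})$ with $2q+2>n$, and apply \cite[Theorem 2]{Meyers:1963} together with weighted Schauder estimates to obtain $u=1+a|x|^{2-n}+O^{2,\alpha}(|x|^{2-n-\gamma})$ and $Y_i=b_i|x|^{2-n}+O^{2,\alpha}(|x|^{2-n-\gamma})$. The gap is in your second pass, and it occurs exactly at $n=3$, which is the one subtlety the paper's proof is organized around. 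After the first pass the quadratic source terms, built from $Du,\,DY=O(|x|^{1-n})$, decay like $|x|^{2-2n}$, and your claim that this ``beats $|x|^{-n-1}$ for every $n\geq 3$'' is false for $n=3$: there $2-2n=-4=-n-1$, so the nonlinear source sits precisely at the borderline rate. The hypothesis $q_0>1$ only pushes the $(\mu,J)$ contribution strictly below $|x|^{-n-1}$; it does nothing for the quadratic terms $|\mathcal{L}_\delta Y|^2$, $(\tr_\delta\mathcal{L}_\delta Y)^2$, $u_{,k}(\mathcal{L}_\delta Y)^k_j$, which come from the data itself and not from the constraints. Hence in dimension three a second blind application of Meyers' theorem does not rule out a $\log|x|\cdot|x|^{-2}$ correction; your own dipole-moment heuristic identifies the danger but does not remove it, since the offending source is the gravitational self-interaction, not $(\mu,J)$.

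What is missing is the paper's explicit treatment of the borderline term when $n=3$. Inserting $Y_i=b_i|x|^{-1}+O^{1,\alpha}(|x|^{-1-\gamma})$ into the quadratic expressions gives $\Delta_\delta u=-2|b|^2|x|^{-4}-\tfrac{3}{2}(b\cdot x)^2|x|^{-6}+O^{\alpha}(|x|^{-4-\gamma})$ (the $q_0>1$ hypothesis is what puts the $\mu$-term into the faster-decaying remainder). The exactly borderline piece is an explicit homogeneous function of degree $-4$ which is even in $x$, so it has no $\ell=1$ spherical-harmonic component---the only mode that would force a logarithm at order $|x|^{-2}$---and it is removed by hand using $\Delta_\delta\bigl(|x|^{-2}\bigr)=2|x|^{-4}$ and $\Delta_\delta\bigl(x_ix_j|x|^{-4}\bigr)=2\delta_{ij}|x|^{-4}-4x_ix_j|x|^{-6}$: subtracting $C_0|x|^{-2}+C_{ij}x_ix_j|x|^{-4}$ leaves a source of size $O^{\alpha}(|x|^{-4-\gamma})$ to which Meyers' theorem applies, and the subtracted corrections are themselves $O^{2,\alpha}(|x|^{-2})$, hence absorbable into the harmonic-asymptotics remainder $O^{2,\alpha}(|x|^{1-n})$; the $Y_j$ equations are handled by the analogous computation. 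Some argument of this kind (or an equivalent structural reason why the borderline degree $-4$ source generates no logarithm) must be added to your proof; without it the case $n=3$, which is the base case of the induction in the main theorem, is not established. For $n>3$ your argument is complete and agrees with the paper.
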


\begin{proof}

By \eqref{eq:uY}, outside a compact set, $(u-1,Y)$ satisfies
\begin{align*}
	 \tfrac{4(n-1)} {n-2}u^{-1} \Delta_{g_{\mathbb{E}}} u - \tfrac{1}{n-1}  \left( \tr_{g_{\mathbb{E}}} \mathcal{L}_{g_{\mathbb{E}}}Y\right)^2 +  | \mathcal{L}_{g_{\mathbb{E}}} Y|^2 &= -2 \mu u^{s}\\
	\Delta_{g_{\mathbb{E}}} Y_j+ (n-1)\frac{s}{2} u^{-1} u_{,k} (\mathcal{L}_{g_{\mathbb{E}}} Y)^k_j - \frac{s}{2} u^{-1} u_{,j} \tr_{g_{\mathbb{E}}}( \mathcal{L}_{g_{\mathbb{E}}}Y) &= J_j u^{s/2},
\end{align*}
for $j = 1,2, \ldots, n$.  By Sobolev embedding, $(u-1,Y) = O^{1, \alpha}(|x|^{-q})$.  
As in the proof of Lemma \ref{le:surjectivity}, we may view this system as $n+1$ Poisson equations for the functions $u, Y_i$, so that $\Delta_{g_{\mathbb{E}}} (u, Y) = O^{0, \alpha}(|x|^{\max(-2q-2, -n-q_0)})$.  Then using the fact that $q \in ((n-2)/2, n-2)$, Theorem 2 of \cite{Meyers:1963} and weighted Schauder estimates imply that 
\begin{align}
\begin{split}
	u &= 1 + a|x|^{2-n} + O^{2, \alpha}(|x|^{2-n-\gamma})\\
	Y_i& = b_i |x|^{2-n} +O^{2, \alpha}(|x|^{2-n-\gamma})
\end{split}
\end{align}
for constants $a, b_i$, for any $0<\gamma<\min(2q+2-n, q_0) $.  It follows that $(u-1, Y)= O^{2,\alpha}(|x|^{2-n})$. 

For $n>3$, since $\max(2-2n,-n-q_0)< -n-1$, we may repeat the above argument with improved decay of the source terms to conclude the desired result. 

When $n=3$, we argue as follows. We expand the nonlinear source terms above.  We find that
\begin{align*}
 |\mathcal{L}_{g_{\mathbb{E}}} Y |^2 &= |L_Y g|^2 -2\tr_{g_{\mathbb{E}}} (L_g Y)(\Div_{g_{\mathbb{E}}}Y) + 3(\Div_{g_{\mathbb{E}}} Y)^2\\
 & = 2\left[\sum_{i,j=1}^3 Y_{i,j}^2 + Y_{i,j}Y_{j,i}\right] - (\Div_{g_{\mathbb{E}}} Y)^2.
 \end{align*}
Since
\[ Y_{i,j} = - b_i \frac{x_j}{|x|^{3}} +O^{1, \alpha}(|x|^{-2-\gamma}),
\]
we have 
\[ (\tr_{g_{\mathbb{E}}} \mathcal{L}_{g_{\mathbb{E}}} Y)^2=(\Div_{g_{\mathbb{E}}} Y)^2 = \left(\sum_{i=1}^3 Y_{i,i}\right)^2 = \frac{(b\cdot x)^2}{  |x|^{6}} + O^{1, \alpha}(|x|^{-4-\gamma})\]
and  
\[\sum_{i,j=1}^3 Y_{i,j}^2 + Y_{i,j}Y_{j,i}= \frac{ |b|^2}{ |x|^{4}} + \frac{(b\cdot x)^2}{|x|^6} 
+ O^{1, \alpha}(|x|^{-4-\gamma}).\]
Thus the constraint equation for $\mu$ becomes
\[8\Delta_{g_{\mathbb{E}}} u =   - \frac{ 2|b|^2}{ |x|^{4}}-  \frac{(b\cdot x)^2}{2|x|^6} 
+ O^{0, \alpha}(|x|^{-4-\gamma}),\]
as long as $\gamma \leq q_0 -1$.  Now observe that
 \begin{align*}
 	\Delta_{{g_{\mathbb{E}}}}  \frac{ x_i x_j}{|x|^4} &= \frac{2 \delta_{ij}}{|x|^4} - \frac{4 x_i x_j}{|x|^6},\\
	 \Delta_{g_{\mathbb{E}}}  \frac{1}{ |x|^2}&= \frac{2 }{|x|^4}.
 \end{align*}
For appropriate choice of constants $C_0$ and $C_{ij}$, we have
 \[
	\Delta_{g_{\mathbb{E}}}  \left( u - C_0\frac{1}{|x|^2} - C_{ij}\frac{ x_i x_j}{|x|^4}\right) =  O^{0, \alpha}(|x|^{-4-\gamma}).
 \]
Now apply \cite[Theorem 2]{Meyers:1963} and weighted Schauder estimates to obtain the desired result for $u$. The argument for $Y$ is analogous.  
\end{proof}

\subsection {Proof of Theorem \ref{th:density-theorem-section}}

Using Theorem \ref{thm:strict-dec} and Proposition \ref{prop:cont} we may reduce to the case where $\mu > (1 + \gamma )|J|_g$. For $l \geq 1$ define $\xi_\lambda(x) := \xi(\frac{x}{\lambda})$ where 
\begin{align*}
	\xi(x)  = \left\{ \begin{array}{ll} 
	1 \quad &\mbox{when  } |x| \le 1\\ 
	 |x|^{-1} & \mbox{when } |x| \ge 2  \end{array}\right..
\end{align*}
Note that  $\xi_\lambda (\mu, J)\in \C^{0,\alpha}_{-n-q_0-1}$ and that $\| \xi_\lambda (\mu, J) - (\mu, J)\|_{L^{p}_{-2-q}}$ tends to zero as $\lambda \to \infty$. By Lemma \ref{le:Corvino-Schoen:2006} and Proposition \ref{pr:harmonic} there are initial data $(g_\lambda, \pi_\lambda)$ with harmonic asymptotics and mass and current densities $ \xi_\lambda (\mu, J)$ 
such that $\| (g - g_\lambda, \pi - \pi_\lambda) \|_{W^{2,p}_{-q}\times W^{1,p}_{-1-q}} \to 0$ as $\lambda \to \infty$. In particular, $\|g - g_\lambda \|_{\C^0} \to 0$ so that 
\[
	|\xi_\lambda J|^2_{g_\lambda} = g_\lambda^{ij} \xi_\lambda^2 J_i J_j = \xi^2_\lambda |J|^2_g (1 + o(1)) \quad \mbox{as}\quad  \lambda \to \infty.
\]
It follows that
\[
	\xi_\lambda \mu > \xi_\lambda (1+\gamma)|J|_g \geq \left(1 + \frac{\gamma}{2}\right) |\xi_\lambda J|_{g_\lambda}
\] 
for $\lambda$ sufficiently large. In particular, the strict dominant energy condition holds for $(g_\lambda, \pi_\lambda)$. The convergence of mass and linear momentum as $\lambda \to \infty$ follows from Proposition  \ref{prop:cont}.  \qed 

\begin{remark}  It is a subtlety in the proof of Theorem \ref{th:density-theorem-section} above that the decay conditions for $(\mu, J)$ improve when we are reducing to harmonic asymptotics. In fact, if we are content to preserve the dominant energy condition without requiring strictness, we can arrange for $(\bar{\mu}, \bar{J})$ to vanish outside a large compact set by choosing $\xi$ with compact support. The modified initial data will then be smooth at infinity. 
\end{remark} 

\begin {remark}
The assertion in \cite{Eichmair:2008} that $\tr_g (k) = O (|x|^{-n})$ for $n$-dimensional initial data sets with harmonic asymptotics is wrong, as was pointed out to the first named author by A. Carlotto. When $n > 3$, the arguments in \cite{Eichmair:2008} continue to hold. When $n =3$, the additional assumption that $\tr_g (k) = O (|x|^{- \gamma})$ for some $\gamma > 2$ in Theorem 3 of \cite{Eichmair:2008} is required throughout (rather than just for the rigidity case). Of course, this is exactly the case treated by R. Schoen and S.-T. Yau in \cite{Schoen-Yau:1981-pmt2}.
\end {remark}

\begin{remark}
When $3 < n < 8$, the full positive mass theorem $E \geq |P|$ can be obtained from the positive energy theorem $E \geq 0$ in the form \cite[Theorem 3] {Eichmair:2008} using the following reduction argument. Assume that $0 < E < |P|$. Fix $\theta \in (0, 1)$ such that $E' := \frac{E - \theta |P|}{1 - \theta^2} < 0$. Using the preceding remark we may assume that $(M, g, k)$ is smooth (including decay on derivatives) outside a large compact subset of $M$ and that $\mu = 0$ and $J = 0$ there. According to \cite[Theorem 6.1]{CO:1981}, there exists a (vacuum) spacetime development of the asymptotically flat end of $(M, g, k)$ in which we may deform the end of $(M, g, k)$ to a boosted slice (of slope $\theta$) that has energy $E'$. See \cite[p. L115]{Chrusciel:1986} for a remark on the transformation behavior of the energy-momentum tensor. The deformed initial data $(M', g', k')$ satisfies the conditions of the positive energy theorem in \cite[Theorem 3]{Eichmair:2008}. A contradiction. 

We are grateful to P.T. Chrus\'ciel for useful discussions related to this argument. This reduction is folklore in the mathematical relativity community when the initial data set is given as a spacelike slice of an asymptotically flat spacetime, see e.g. \cite{Chrusciel:1989}.
\end{remark}

{\bf Acknowledgements. } The first named author acknowledges the support of NSF grant DMS-0906038 and of SNF grant 200021-140467.  The second named author acknowledges the support of NSF grant DMS-1005560 and DMS-1308837.  The third named author acknowledges the support of NSF grant DMS-0903467.  The last named author acknowledges the support of NSF grants DMS-1105323 and DMS-1404966.

\bibliographystyle{amsplain}
\bibliography{spacetimePMTreferences}

\providecommand{\bysame}{\leavevmode\hbox to3em{\hrulefill}\thinspace}
\providecommand{\MR}{\relax\ifhmode\unskip\space\fi MR }
% \MRhref is called by the amsart/book/proc definition of \MR.
\providecommand{\MRhref}[2]{%
  \href{http://www.ams.org/mathscinet-getitem?mr=#1}{#2}
}
\providecommand{\href}[2]{#2}
\begin{thebibliography}{10}

\bibitem{Allard:1975}
William~K. Allard, \emph{On the first variation of a varifold: boundary
  behavior}, Ann. of Math. (2) \textbf{101} (1975), 418--446. \MR{0397520 (53
  \#1379)}

\bibitem{Andersson-Mars-Simon:2005}
Lars Andersson, Marc Mars, and Walter Simon, \emph{Local existence of dynamical
  and trapping horizons}, Phys. Rev. Lett. \textbf{95} (2005), 111102.

\bibitem{Andersson-Metzger:2009}
Lars Andersson and Jan Metzger, \emph{The area of horizons and the trapped
  region}, Comm. Math. Phys. \textbf{290} (2009), no.~3, 941--972. \MR{2525646
  (2010f:53118)}

\bibitem{Ashtekar-Galloway:2005}
Abhay Ashtekar and Gregory~J. Galloway, \emph{Some uniqueness results for
  dynamical horizons}, Adv. Theor. Math. Phys. \textbf{9} (2005), no.~1, 1--30.
  \MR{2193368 (2006k:83101)}

\bibitem{Bartnik:1986}
Robert Bartnik, \emph{The mass of an asymptotically flat manifold}, Comm. Pure
  Appl. Math. \textbf{39} (1986), no.~5, 661--693. \MR{849427 (88b:58144)}

\bibitem{Beig-Chrusciel:1996}
Robert Beig and Piotr~T. Chru{\'s}ciel, \emph{Killing vectors in asymptotically
  flat space-times. {I}. {A}symptotically translational {K}illing vectors and
  the rigid positive energy theorem}, J. Math. Phys. \textbf{37} (1996), no.~4,
  1939--1961. \MR{1380882 (97d:83033)}

\bibitem{CO:1981}
D.~Christodoulou and N.~{\'O}~Murchadha, \emph{The boost problem in general
  relativity}, Comm. Math. Phys. \textbf{80} (1981), no.~2, 271--300.
  \MR{623161 (84e:83011)}

\bibitem{Chrusciel:1986}
Piotr~T. Chru{\'s}ciel, \emph{A remark on the positive-energy theorem},
  Classical Quantum Gravity \textbf{3} (1986), no.~6, L115--L121. \MR{868711
  (88f:83030a)}

\bibitem{Chrusciel:1989}
\bysame, \emph{On the energy of the gravitational field at spatial infinity},
  Conference on {M}athematical {R}elativity ({C}anberra, 1988), Proc. Centre
  Math. Anal. Austral. Nat. Univ., vol.~19, Austral. Nat. Univ., Canberra,
  1989, pp.~9--33. \MR{1020788 (90k:83029)}

\bibitem{Chrusciel-Maerten:2006}
Piotr~T. Chru{\'s}ciel and Daniel Maerten, \emph{Killing vectors in
  asymptotically flat space-times. {II}. {A}symptotically translational
  {K}illing vectors and the rigid positive energy theorem in higher
  dimensions}, J. Math. Phys. \textbf{47} (2006), no.~2, 022502, 10.
  \MR{2208148 (2007b:83054)}

\bibitem{Corvino-Schoen:2006}
Justin Corvino and Richard~M. Schoen, \emph{On the asymptotics for the vacuum
  {E}instein constraint equations}, J. Differential Geom. \textbf{73} (2006),
  no.~2, 185--217. \MR{2225517 (2007e:58044)}

\bibitem{Ding:2008}
Lu~Ding, \emph{Positive mass theorems for higher dimensional {L}orentzian
  manifolds}, J. Math. Phys. \textbf{49} (2008), no.~2, 022504, 12. \MR{2392853
  (2008m:53081)}

\bibitem{Duzaar-Steffen:1993}
Frank Duzaar and Klaus Steffen, \emph{{$\lambda$} minimizing currents},
  Manuscripta Math. \textbf{80} (1993), no.~4, 403--447. \MR{1243155
  (95f:49062)}

\bibitem{Eichmair:2009-Plateau}
Michael Eichmair, \emph{The {P}lateau problem for marginally outer trapped
  surfaces}, J. Differential Geom. \textbf{83} (2009), no.~3, 551--583.
  \MR{2581357}

\bibitem{Eichmair:2010}
\bysame, \emph{Existence, regularity, and properties of generalized apparent
  horizons}, Comm. Math. Phys. \textbf{294} (2010), no.~3, 745--760.
  \MR{2585986 (2011d:53171)}

\bibitem{Eichmair:2008}
\bysame, \emph{The {J}ang equation reduction of the spacetime positive energy
  theorem in dimensions less than eight}, Comm. Math. Phys. \textbf{319}
  (2013), no.~3, 575--593. \MR{3040369}

\bibitem{Eichmair-Metzger:2011c}
Michael Eichmair and Jan Metzger, \emph{{J}enkins-{S}errin-type results for the
  {J}ang equation}, to appear in J. Differential Geom.

\bibitem{Fischer-Marsden:1973}
Arthur~E. Fischer and Jerrold~E. Marsden, \emph{Linearization stability of the
  {E}instein equations}, Bull. Amer. Math. Soc. \textbf{79} (1973), 997--1003.
  \MR{0426035 (54 \#13981)}

\bibitem{Galloway:2008}
Gregory~J. Galloway, \emph{Rigidity of marginally trapped surfaces and the
  topology of black holes}, Comm. Anal. Geom. \textbf{16} (2008), no.~1,
  217--229. \MR{2411473 (2009e:53087)}

\bibitem{Galloway-Murchadha:2008}
Gregory~J. Galloway and Niall {\'O}~Murchadha, \emph{Some remarks on the size
  of bodies and black holes}, Classical Quantum Gravity \textbf{25} (2008),
  no.~10, 105009, 9. \MR{2416045 (2009c:83058)}

\bibitem{Galloway-Schoen:2006}
Gregory~J. Galloway and Richard Schoen, \emph{A generalization of {H}awking's
  black hole topology theorem to higher dimensions}, Comm. Math. Phys.
  \textbf{266} (2006), no.~2, 571--576. \MR{2238889 (2007i:53078)}

\bibitem{Huang:2011}
Lan-Hsuan Huang, \emph{On the center of mass in general relativity}, Fifth
  {I}nternational {C}ongress of {C}hinese {M}athematicians. {P}art 1, 2, AMS/IP
  Stud. Adv. Math., 51, pt. 1, vol.~2, Amer. Math. Soc., Providence, RI, 2012,
  pp.~575--591. \MR{2908093}

\bibitem{Lohkamp:2006}
Joachim Lohkamp, \emph{The higher dimensional positive mass theorem {I}},
  arXiv:math/0608795v1 (2006).

\bibitem{Meyers:1963}
Norman Meyers, \emph{An expansion about infinity for solutions of linear
  elliptic equations}, J. Math. Mech. \textbf{12} (1963), 247--264. \MR{0149072
  (26 \#6568)}

\bibitem{Parker-Taubes:1982}
Thomas Parker and Clifford~Henry Taubes, \emph{On {W}itten's proof of the
  positive energy theorem}, Comm. Math. Phys. \textbf{84} (1982), no.~2,
  223--238. \MR{661134 (83m:83020)}

\bibitem{Schoen:1983}
Richard Schoen, \emph{Uniqueness, symmetry, and embeddedness of minimal
  surfaces}, J. Differential Geom. \textbf{18} (1983), no.~4, 791--809 (1984).
  \MR{730928 (85f:53011)}

\bibitem{Schoen:1989}
\bysame, \emph{Variational theory for the total scalar curvature functional for
  {R}iemannian metrics and related topics}, Topics in calculus of variations
  ({M}ontecatini {T}erme, 1987), Lecture Notes in Math., vol. 1365, Springer,
  Berlin, 1989, pp.~120--154. \MR{994021 (90g:58023)}

\bibitem{Schoen:2005}
\bysame, \emph{Mean curvature in {R}iemannian geometry and general relativity},
  Global theory of minimal surfaces, Clay Math. Proc., vol.~2, Amer. Math.
  Soc., Providence, RI, 2005, pp.~113--136. \MR{2167257 (2006f:53044)}

\bibitem{Schoen-Simon:1982}
Richard Schoen and Leon Simon, \emph{A new proof of the regularity theorem for
  rectifiable currents which minimize parametric elliptic functionals}, Indiana
  Univ. Math. J. \textbf{31} (1982), no.~3, 415--434. \MR{652826 (84j:49039)}

\bibitem{Schoen-Yau:1979-pmt1}
Richard Schoen and Shing~Tung Yau, \emph{On the proof of the positive mass
  conjecture in general relativity}, Comm. Math. Phys. \textbf{65} (1979),
  no.~1, 45--76. \MR{526976 (80j:83024)}

\bibitem{Schoen-Yau:1981-asymptotics}
\bysame, \emph{The energy and the linear momentum of space-times in general
  relativity}, Comm. Math. Phys. \textbf{79} (1981), no.~1, 47--51. \MR{609227
  (82j:83045)}

\bibitem{Schoen-Yau:1981-pmt2}
\bysame, \emph{Proof of the positive mass theorem. {II}}, Comm. Math. Phys.
  \textbf{79} (1981), no.~2, 231--260. \MR{612249 (83i:83045)}

\bibitem{Schoen-Yau:1979-pat}
Richard~M. Schoen and Shing~Tung Yau, \emph{Complete manifolds with nonnegative
  scalar curvature and the positive action conjecture in general relativity},
  Proc. Nat. Acad. Sci. U.S.A. \textbf{76} (1979), no.~3, 1024--1025.
  \MR{524327 (80k:58034)}

\bibitem{GMT}
Leon Simon, \emph{Lectures on geometric measure theory}, Proceedings of the
  Centre for Mathematical Analysis, Australian National University, vol.~3,
  Australian National University Centre for Mathematical Analysis, Canberra,
  1983. \MR{756417 (87a:49001)}

\bibitem{Taylor:1976}
Jean~E. Taylor, \emph{The structure of singularities in solutions to
  ellipsoidal variational problems with constraints in {${\rm R}^{3}$}}, Ann.
  of Math. (2) \textbf{103} (1976), no.~3, 541--546. \MR{0428182 (55 \#1208b)}

\bibitem{White:1987}
Brian White, \emph{The space of {$m$}-dimensional surfaces that are stationary
  for a parametric elliptic functional}, Indiana Univ. Math. J. \textbf{36}
  (1987), no.~3, 567--602. \MR{905611 (88k:58027)}

\bibitem{White:1991}
\bysame, \emph{Existence of smooth embedded surfaces of prescribed genus that
  minimize parametric even elliptic functionals on {$3$}-manifolds}, J.
  Differential Geom. \textbf{33} (1991), no.~2, 413--443. \MR{1094464
  (92e:58048)}

\bibitem{Witten:1981}
Edward Witten, \emph{A new proof of the positive energy theorem}, Comm. Math.
  Phys. \textbf{80} (1981), no.~3, 381--402. \MR{626707 (83e:83035)}

\end{thebibliography}
\end{document}